\documentclass[hidelinks,onefignum,onetabnum]{siamart251216}
\usepackage{amssymb, amsmath}
\usepackage{mathtools} % for \coloneq
\usepackage{tikz}
\usepackage{xcolor}
\usetikzlibrary{trees}
\usepackage{enumitem}
\usetikzlibrary{arrows.meta, positioning, shapes.geometric}
\usetikzlibrary{decorations.pathmorphing}
\usetikzlibrary{shapes,shadows,fit} % fit is needed for cloud
\usetikzlibrary{calc}
\usepackage[utf8]{inputenc}
\usepackage{graphicx} % Required for \includegraphics
\usepackage{caption}  % Required for \captionof, if you don't use subfigure/subfloat
\usepackage{float}
\usepackage{booktabs}
\usepackage{subcaption} % Recommended for subfigures (more modern than subfigure)
\usepackage{makecell} % for \Xhline
\usepackage{cleveref}
\usepackage{diagbox}
\usepackage{siunitx}   % For number alignment (optional)
\usepackage{caption}
\usepackage{multirow, booktabs}

\usetikzlibrary{arrows.meta, positioning}
\usetikzlibrary{decorations.pathmorphing, decorations.text}
\usetikzlibrary{decorations.pathreplacing, positioning}
\usepackage{lipsum}
\usepackage{amsmath}
\usepackage{amsfonts}
\usepackage{graphicx}
\usepackage{epstopdf}
\usepackage{algorithmic}
\ifpdf
  \DeclareGraphicsExtensions{.eps,.pdf,.png,.jpg}
\else
  \DeclareGraphicsExtensions{.eps}
\fi

\newsiamremark{remark}{Remark}
\newsiamremark{example}{Example}
\newsiamremark{assumption}{Assumption}
\crefname{assumption}{Assumption}{Assumption}
\newsiamthm{claim}{Claim}
\newsiamremark{fact}{Fact}
\crefname{fact}{Fact}{Facts}

\headers{DDIR: denoiser driven iterative regularization}{H. Bajpai, A. K. Giri, T. Jahn and A. Jha}

\title{On the convergence of an adaptive denoiser driven iterative regularization with early stopping \thanks{Submitted to the editors DATE.
\funding{Under project number 57762238 this research received financial support from  DST, India and DAAD, Germany as a part of project-related Indo-German person exchange program.  The first author gratefully acknowledges the financial assistance provided by the Ministry of Education, Government of India, and the Indian Institute of Technology Roorkee through a Ph.D. fellowship supporting this research. The third author gratefully acknowledges funding by the Deutsche Forschungsgemeinschaft (DFG, German Research
Foundation) under Germany´s Excellence Strategy – The Berlin Mathematics
Research Center MATH+ (EXC-2046/1, EXC-2046/2, project ID: 390685689).
The fourth author gratefully acknowledges funding by the Indian Institute of Technology, Gandhinagar, through the grant IP/IP/52016.}}}

\author{
Harshit Bajpai%
\thanks{Department of Mathematics, Indian Institute of Technology Roorkee, Roorkee 247667, India
(\email{harshit\_b@ma.iitr.ac.in, bajpaiharshit87@gmail.com}).}
\and
Ankik Kumar Giri%
\thanks{Department of Mathematics, Indian Institute of Technology Roorkee, Roorkee 247667, India
(\email{ankik.giri@ma.iitr.ac.in}).}
\and 
Tim Jahn
\thanks{Institut für Mathematik, Technische Universität Berlin, Berlin, Germany 
(\email{jahn@tu-berlin.de}).}
\and
Abhinav Jha%
\thanks{Department of Mathematics, Indian Institute of Technology Gandhinagar, Palaj 382055, India
(\email{abhinav.jha@iitgn.ac.in}).}
}
\usepackage{amsopn}

\ifpdf
\hypersetup{
  pdftitle={On the convergence of an adaptive denoiser driven iterative regularization with early stopping},
  pdfauthor={Harshit Bajpai, Ankik Kumar Giri, Tim Jahn and Abhinav Jha }
}
\fi

\def\d{\delta}

\def\d{\delta}

\begin{document}

\maketitle
% REQUIRED
\begin{abstract}
Solving inverse problems requires appropriate regularization techniques to ensure well-posedness and stability. In recent years, denoiser-driven methods have emerged as effective regularization strategies, achieving state-of-the-art performance in various imaging applications. However, their stability and convergence within iterative regularization frameworks remain largely unexplored.
% a rigorous theoretical understanding of their stability and convergence within the framework of iterative regularization remains largely unexplored.
In this work, we extend the framework of Regularization by Denoising (RED) by introducing a novel denoiser-driven iterative regularization scheme, referred to as \texttt{DDIR}, that incorporates a new regularization functional based on averaged denoisers. The proposed approach employs an adaptive step-size strategy together with an \emph{a posteriori} stopping rule to ensure stability while alleviating oscillatory behavior and semi-convergence effects induced by noise. As our main theoretical contribution, we prove that the resulting reconstruction method constitutes a stable and convergent regularization scheme in the classical sense. To the best of our knowledge, this provides the first rigorous justification of \texttt{DDIR} within the framework of regularization theory.
Finally, we demonstrate the performance of the proposed method through numerical experiments on image deblurring and phase retrieval Computed Tomography (CT) using three denoisers, namely median, TNRD, and TV proximal. The results highlight the effectiveness of the method in terms of reconstruction accuracy and computational efficiency.
\end{abstract}

% REQUIRED
\begin{keywords}
data-driven learning, regularization by denoising, plug-and-play prior,  medical imaging, Laplacian regularization, inverse problems, ill-posed problems
\end{keywords}

% REQUIRED
\begin{MSCcodes}
46N10, 94A08, 47A52, 65F22 
\end{MSCcodes}

\section{Introduction}
Image reconstruction is a fundamental component of many scientific and engineering applications, including biomedical imaging, nondestructive testing, and remote sensing. These tasks are typically formulated as inverse problems, which aim to recover an unknown model parameter 
$u^\dagger \in U$ from noisy and indirect observations 
$v^\delta \in V$, modeled as
\begin{equation}
v^\delta = \mathcal G (u^\dagger) + \eta.
\label{eq:forward_model}
\end{equation}
Here, 
\(\mathcal{G} : \operatorname{dom}(\mathcal G) \subseteq U \to V\) with \(\operatorname{dom}(\mathcal G)\) denoting the domain of $\mathcal{G}$, is an operator (possibly nonlinear) between Hilbert spaces $U$ and $V$ modeling the forward problem. The spaces $U$ and $V$ are equipped with the usual inner product
$\langle \cdot, \cdot \rangle$ 
which induce the corresponding norm $\|\cdot\|$.
The term \(\eta\) represents the data perturbation and is assumed to be bounded by 
a known noise level $\delta > 0$, that is,
\(
\|\eta\| \leq \delta.
\)
In the noise-free case \(\delta = 0\), the (exact) data is denoted by $v$, which is given by \(v:= \mathcal G (u^\dagger)\).

The inverse problem in \eqref{eq:forward_model} is said to be ill-posed  if at least one of the following properties fails: injectivity or surjectivity of the forward operator, or stability of the inverse mapping. In particular, when $\mathcal{G}(\cdot)$ is a compact operator with an infinite-dimensional range, then surjectivity typically fails and the inverse mapping (if it exists) is unstable. This situation arises, for example, for the ray transform underlying several medical imaging modalities, including computed tomography (CT) and positron emission tomography (PET) \cite{natterer2001mathematics, natterer2001mathematical}. Consequently, ill-posedness is a fundamental feature of inverse problems and will be assumed throughout this work.
Therefore, to mitigate ill-posedness, regularization techniques are essential to guarantee the stability and reliability of approximate solutions. A large class of such methods falls under the category of variational regularization~\cite{scherzer2009variational}, which is commonly formulated as
\begin{equation}\label{eqn:variational}
u_\lambda^\delta 
\coloneqq 
\operatorname*{arg\,min}_{u \in \operatorname{dom}(\mathcal{G})} 
\left\{ 
\mathcal{D}\big(\mathcal{G}(u), v^\delta\big) 
+ \lambda \Theta(u) 
\right\}.
\end{equation}
Here, \(\mathcal{D}(\cdot, v^\delta)\) enforces fidelity to the observed data, while the second term \(\Theta(\cdot)\) serves as a regularization functional that restores well-posedness to the problem, and $\lambda > 0$ is a regularization parameter that trades between data fidelity and regularization. 

 A central question concerns the appropriate choice of the data fidelity term
$\mathcal{D}(\cdot, v^\delta)$ and the regularization functional $\Theta(\cdot)$ in
\eqref{eqn:variational}. A commonly adopted choice is
\[
\mathcal D(\mathcal{G}(u), v^\delta) = \tfrac{1}{2}\|\mathcal{G}(u) - v^\delta\|_2^2, 
\qquad 
\Theta(u) = \|u\|_1,
\]
where the $\ell_2$-norm corresponds to a least-squares data fidelity term, while the
$\ell_1$-norm promotes sparsity in the reconstructed solution. For further discussion and theoretical background, we refer the reader to \cite{daubechies2004iterative,engl1996regularization, scherzer2009variational}. Other prominent and emerging  choices of $\Theta(\cdot)$ include data-driven regularizers, which leverage priors learned from data to adaptively steer the reconstruction \cite{arridge2019solving, aspri2020data, aspri2020data1, bajpai2025stochastic, zhou2025convergence}, trained neural network based regularizers~\cite{bianchi2023uniformly,li2020nett,lunz2018adversarial, obmann2021augmented},  and graph Laplacian regularizers, which encode geometric or relational structure via graph-based similarity models \cite{bajpai2026graph, bajpai2025convergence, bianchi2025data}. These approaches have demonstrated improved reconstruction performance in high-dimensional ill-posed inverse problems by promoting structure-aware and data-adaptive solutions.

 In \cite{romano2017little}
a \emph{Regularization by Denoising (RED)} algorithm was proposed for linear inverse problems. Building upon
the extensive literature on image denoising (see, e.g., \cite{buades2005review,milanfar2012tour}) and recent
methodological developments (e.g., \cite{chen2016trainable,zhang2017beyond}), they demonstrated how an explicit
regularization functional $\Theta(u)$ can be systematically constructed from an image
denoiser $f :  U \to  U$ via a simple and effective way as
\begin{equation}\label{eqn:RED regularizer}
    \Theta_{\text{red}}(u) = \frac{1}{2} \langle u, u - f(u) \rangle.
\end{equation}
The regularizer is defined through an image-adaptive Laplacian, where the underlying structure is induced by the selected denoiser $f(\cdot).$
Using $\Theta_{\text{red}}(\cdot)$ they proposed several reconstruction algorithms based on steepest descent, the alternating direction method of multipliers, and fixed point iterations, which achieve state of the art performance in super resolution  and image deblurring  tasks. Later in~\cite{reehorst2018regularization},
it is demonstrated that the  minimization of variational form of RED algorithms holds only under restrictive conditions, namely when
the denoiser is locally homogeneous, meaning that
$f((1+\varepsilon)u) = (1+\varepsilon)f(u)$ for all $u$ and sufficiently
small $\varepsilon >0$, and when the Jacobian of $f$ is symmetric.
Since these
assumptions are generally violated by several practical denoisers, the variational interpretation
of RED is not valid in general. To address this limitation, the authors of~\cite{reehorst2018regularization}
introduced a score-matching framework to analyze the convergence behavior of RED algorithms. In both the works, convergence is not proven in the sense of regularization.

To establish denoiser-based algorithms as convergent regularization methods,
Plug-and-Play (PnP) denoising is considered in conjunction with forward--backward
splitting (FBS), leading to the following iterative regularization scheme:
\begin{equation}\label{eqn: PnP-FBS}
u_{k+1,\sigma}^\delta
= f_\sigma\!\left(u_{k, \sigma}^\delta - \mu G^*\bigl(G u_{k,\sigma}^\delta - v^\delta\bigr)\right),
\end{equation}
where $k \in \mathbb{N}$ denotes the iteration index and $u_{k,\sigma}^\delta$ represents the $k$-th iterate, $\mu > 0$ is a fixed step-size, and $f_\sigma$ denotes a denoising operator that removes Gaussian noise with standard deviation $\sigma$ from its input. Furthermore, $G$ denotes the linear version (linearization) of the forward operator $\mathcal{G}$, and $G^*$ represents the adjoint of $G$.
This approach
is inspired by the seminal work of~\cite{venkatakrishnan2013plug}, which introduced
the incorporation of denoising operators within proximal splitting algorithms in a
PnP framework. To the best of our knowledge, the convergence of \eqref{eqn: PnP-FBS} in the sense of
regularization was first established in~\cite{ebner2024plug}. This analysis was
subsequently extended in~\cite{hauptmann2025convergent}, where a more refined control
of the denoiser parameter $\sigma$ was introduced in order to appropriately adjust
the regularization strength across different noise levels. However, the scheme \eqref{eqn: PnP-FBS} suffers from several limitations. The regularizer is implicit through the denoiser, making its theoretical interpretation difficult. Moreover, the method is primarily suited for linear problems and lacks adaptivity due to the use of a fixed step size. Additionally, selecting an appropriate regularization parameter remains challenging and is often impractical in real-world applications.
% However, the scheme \eqref{eqn: PnP-FBS} suffers from several limitations, primarily due to the implicit nature of the regularizer, limitation to solve linear problems, non adaptive nature due to fixed step size, and the challenge of selecting an appropriate regularization parameter, which is often infeasible in practical applications.
These limitations motivate the development of an iterative regularization method that does not rely on \emph{a priori} parameter selection and instead employs the regularizer \eqref{eqn:RED regularizer} in an explicit manner.
Hence, we propose an iterative algorithm, which  is given as 
\begin{equation}\label{eqn:DDR}
    u_{k+1}^\delta = u_k^\delta - \mu_k^\delta \mathcal{G}'(u_k^\delta)^* (\mathcal G(u_k^\delta) - v^\delta) - \lambda_k^\delta (u_k^\delta - D_{h_k}(u_k^\delta)),
\end{equation}
where $D_{h_k}(\cdot) : U \to U$ is a denoiser with the parameter $h_k>0$ that controls the strength of the denoising, $\mathcal{G}'(u_k^\delta)^*$ denotes the adjoint of the Fr\'echet derivative of $\mathcal G$ at $u_k^\delta$,  $\mu_k^\delta$ denotes the step size, and $\lambda_k^\delta$ represents the
weighting parameter, both of which are chosen appropriately in adaptive manner. A detailed discussion of the required assumptions is presented in 
Section~\ref{sec:assumptions}, while the algorithm is described in Subsection~\ref{subsec:the method}.
Owing to the ill-conditioning of the operator $\mathcal{G}(\cdot)$, the iterative scheme cannot be
continued indefinitely and must be terminated in a controlled manner, a strategy
commonly referred to as \emph{early stopping} in the machine
learning  and inverse problem  literature~\cite{engl1996regularization}. Early stopping is now widely recognized as a crucial
mechanism for preventing overfitting, particularly in over-parameterized
models~\cite{barbanoimage,huang2025early,jahn2020discrepancy,jahn2024early, wang2023early}.

With this goal in mind, we adopt the discrepancy principle~\cite{morozov1966solution}, which is based on the
premise that the iteration should be terminated once the residual norm becomes
comparable to the noise level in the data.
% Let $u_k^\delta$ denote the
% iterate at iteration $k$ corresponding to noisy data $v^\delta$, and let
% $\tau > 1$ be a fixed tolerance parameter.
The stopping index
$k_{\mathrm{dp}}$ determined by the discrepancy principle is then defined as
\begin{equation}\label{eqn:dp}
k_{\mathrm{dp}}
:= \min\left\{ k \ge 0 :
\left\| \mathcal{G}(u_k^\delta) - v^\delta \right\|
\le \tau \delta \right\},
\end{equation}
where $\tau > 1$ be a fixed tolerance parameter.
Formally, the applicability of this principle requires an estimate of the noise
level $\delta$, which can often be obtained directly from the observed data. We refer to the iterative scheme \eqref{eqn:DDR}, combined with 
the stopping rule \eqref{eqn:dp}, as 
\emph{Denoiser-Driven Iterative Regularization} (\texttt{DDIR}). 
To the best of our knowledge, this work provides the first rigorous 
convergence analysis of a denoiser-driven regularization method 
formulated within an iterative framework and equipped with an early 
stopping strategy. Consequently, the proposed approach can be regarded 
as a new member of the class of iterative regularization methods in 
modern regularization theory. Extensive numerical experiments on image 
deblurring and phase retrieval CT problems demonstrate that \texttt{DDIR} effectively alleviates several well-known limitations of 
RED-type methods while maintaining high reconstruction quality. We further compare the proposed method with the PnP regularization approach~\cite{ebner2024plug} in order to highlight the advantages of iterative regularization and early stopping over existing methods.

The remainder of this manuscript is organized as follows. Section~\ref{sec:assumptions} introduces the notation, preliminaries, and the assumptions required for the analysis. Section~\ref{sec:Convergence} presents the proposed method and establishes its finite termination property. Section~\ref{Sec:Con_analysis} establishes stability and convergence as 
the noise level tends to zero. Numerical experiments demonstrating the effectiveness of the method 
are provided in Section~\ref{sec:numerical}. Finally, Section~\ref{sec:conclusion} provides a summary of the principal contributions and discusses possible avenues for future research.

% \subsection{Our Contributions}
% \begin{itemize}
%     \item We propose a novel denoiser-driven iterative regularization method equipped with an early stopping strategy, which serves as a principled mechanism to ensure stability and prevent overfitting.

%     \item By incorporating adaptive step sizes and dynamically weighted parameters, the proposed scheme addresses and resolves a challenging open question raised in \cite{romano2017little}.

%     \item We provide a comprehensive comparison with state-of-the-art methods, demonstrating the effectiveness, robustness, and practical advantages of the proposed approach.
% \end{itemize}
%%%%%%%%%%%%%%%%%%%%%%%%%%%%%%%%%%%%%%%%%%%%%%%%%%%%%%%
\section{Preliminaries and assumptions}\label{sec:assumptions} 
In this section, we introduce the notation, state the required assumptions, and present the preliminary material that forms the foundation of our analysis. For any $\wp > 0$ and $u \in U$, we 
denote by $\mathcal{B}(\wp, u)$ the closed ball of radius $\wp$ centered at $u$, that is,
\(
\mathcal{B}(\wp, u)
:=
\left\{
\tilde{u} \in U \;:\; \|u - \tilde{u}\| \le \wp
\right\}.
\)
Throughout the paper, we work under the following assumptions. 
\begin{assumption}\label{assum:existence of a solution}
\begin{enumerate}
    \item [(A1)] There exists a $\wp> 0$ such that $\mathcal{B}(2\wp, u_0) \subset \operatorname{dom}(\mathcal{G})$ and \eqref{eq:forward_model} has a solution $u^\dagger \in \mathcal{B}(\wp, u_0)$ corresponding to $\delta = 0.$
    \item [(A2)]  \(\mathcal{G} : \operatorname{dom}(\mathcal G) \subseteq U \to V\) is continuous.
%     \textcolor{blue}{$\mathcal{G}$ is weakly closed on $\operatorname{dom}(\mathcal G)$, i.e., for any sequence 
% $\{u_k\} \subset \operatorname{dom}(\mathcal G)$ satisfying $u_k \rightharpoonup u \in U$ and 
% $\mathcal{G}(u_k) \rightharpoonup v \in Y$, it holds that 
% $u \in \operatorname{dom}(\mathcal G)$ and $\mathcal{G}(u) = v$. }
\item[(A3)] $\mathcal{G}$ is Fr\'echet differentiable on $\operatorname{dom}(\mathcal{G})$, and the mapping 
$u \mapsto \mathcal{G}'(u)$ is continuous on $\mathcal{B}(2\wp, u_0)$. Moreover,
\(
\|\mathcal{G}'(u)\| \le B_\mathcal{G}, \)
for all \( u  \in \mathcal{B}(2\wp, u_0),
\)
 and for some constant $B_\mathcal{G} > 0$. Furthermore, there exists a constant 
$0 \le \zeta < 1$ such that
\begin{equation}\label{eqn:TCC}
\|\mathcal{G}(u) - \mathcal{G}(\bar{u}) - \mathcal{G}'(\bar{u})(u - \bar{u})\|
\le \zeta \|\mathcal{G}(u) - \mathcal{G}(\bar{u})\|
\end{equation}
for all $u, \bar{u} \in \mathcal{B}(2\wp, u_0)$.
\end{enumerate}
\end{assumption}
Here $u_0 = u_0^\delta$ denotes the initial guess.
The requirements stated in Assumption~\ref{assum:existence of a solution} are standard benchmarks for analyzing the convergence of iterative regularization techniques for nonlinear ill-posed inverse problems 
\cite{hanke1995convergence,kaltenbacher2008iterative}. In particular, (A1) is a standing assumption in which we suppose that the noise-free observation $v\in V $, corresponding to $\delta = 0$ in~(\ref{eq:forward_model}), is generated by the action of the operator $\mathcal{G}(\cdot)$ on a ground-truth element $u^\dagger \in \mathcal{B}(\wp, u_0)$. The inequality \eqref{eqn:TCC}, known as 
the \emph{tangential cone condition} (TCC), has been established for various classes of 
nonlinear inverse problems \cite{hanke1995convergence,jin2012sparsity}. Moreover, if the operators $\mathcal{G}(\cdot)$ is 
linear, then \eqref{eqn:TCC} holds with $\zeta = 0$.
% \begin{assumption}\label{assum:existence of a solution}
%     The operator $M: U \to V$ is bounded linear and there exist $u^\dagger \in U$ such that $Mu^\dagger =v.$
% \end{assumption}

Next, we provide the definition of the proximal operator associated with a (possibly non-smooth) convex functional $g(\cdot)$, a fundamental concept in modern non-smooth optimization and variational analysis.
\begin{definition}
   Let $g : U \to \mathbb{R} \cup \{+\infty\}$ be a proper, lower semicontinuous, and convex function. 
For a given parameter $\omega > 0$, the proximal operator associated with $g$ is defined as
\begin{equation}\label{eq:prox_def}
\operatorname{prox}_{\omega g}(u)
:= \arg\min_{y \in U}
\left\{
\omega g(y) + \frac{1}{2} \| u - y \|^2
\right\},
\end{equation}
for any $y \in U$.
The minimization problem in~\eqref{eq:prox_def} admits a unique solution due to the strong convexity induced by the quadratic term.
\end{definition}
% If, in addition, $g$ is differentiable, the proximal operator admits an interpretation in terms of gradient flows.
% With step size $\omega = \lambda$, the proximal iteration
% \(
% u_{k+1} = \operatorname{prox}_{\lambda g}(u_k)
% \)
% can be equivalently written as the implicit gradient descent scheme
% \[
% u_{k+1} = u_k - \lambda \nabla g(u_{k+1}),
% \]
% which follows directly from the first-order optimality condition associated with~\eqref{eq:prox_def}.
%%%%%%%%%%%%%%%%%%%%%%%%%%%%%%%%%%%%%%%%%%%%%%%%%%%%%%%%%%%%%%%%%%%%%%%%%%%%%%%%%%%%%%%%%%%%%%%%%%%%%%%%%%%%%%%%%%%%%%%%%%%%%%%%%%%%%%%%%%%%%%%%%%%%
\subsection{Image denoising}
    Image denoising can be viewed as a particular instance of~\eqref{eq:forward_model}, corresponding to the identity forward model
$v^\delta = u + \eta$, where $\eta$ denotes additive white Gaussian noise with variance
$\sigma^2$. In this setting, the variational form 
reduces to
\begin{equation}\label{eqn:variational denoising}
u_{\mathrm{denoise}} \in \arg\min_{u \in U}
\left\{
\frac{1}{2\sigma^2}\|v^\delta - u\|^2 + \lambda \Theta(u)
\right\},
\end{equation}
where $\lambda \geq 0$ is the step size and $\Theta (\cdot): U \to \mathbb{R}\cup \{+ \infty\}$ is a specifically chosen prior functional. Over the past decade, numerous highly effective image denoising algorithms have been developed, achieving remarkable performance. In general, an image denoiser can be viewed as a mapping 
\(
D : U \rightarrow U,
\)
which transforms a noisy image $v^\delta$ into an estimate $\hat{u} = D(v^\delta)$ of the underlying clean image $u$. Ideally, $\hat{u}$ provides an accurate approximation of $u$. Denoising methods are commonly derived from frameworks such as maximum a posteriori (MAP) estimation, minimum mean square error (MMSE) estimation, collaborative filtering techniques, supervised learning approaches, and related methodologies.

It is worth noting that \eqref{eq:prox_def} bears a strong resemblance to \eqref{eqn:variational denoising}, suggesting that proximal operators can be interpreted as a particular class of denoisers. Beyond this class, several highly effective denoising methods have been proposed in the literature, including NLM~\cite{buades2005non}, BM3D~\cite{dabov2007image}, TNRD~\cite{chen2016trainable}, and DnCNN~\cite{zhang2017beyond}. See Fig.~\ref{fig:denoising_hierarchy} for structural relation.
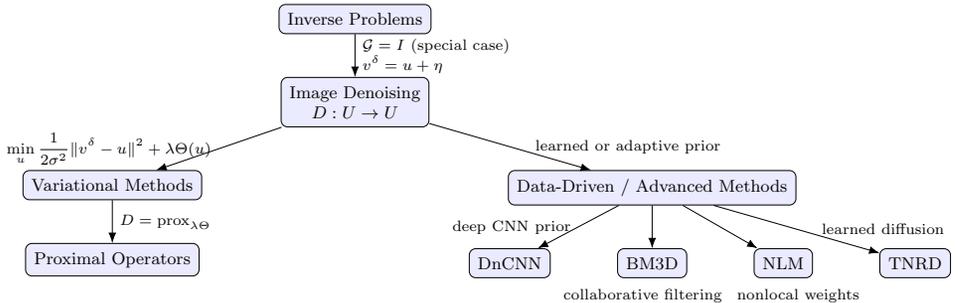
\begin{figure}[ht]
\centering
\resizebox{\columnwidth}{!}{
\begin{tikzpicture}[
    node distance=0.9cm and 1.6cm,
    every node/.style={font=\footnotesize, align=center},
    arrow/.style={-Latex, thin},
    box/.style={
        rectangle,
        rounded corners=3pt,
        draw=black,
        fill=blue!8,
        inner sep=4pt
    }
]

% Top
\node[box] (inv) {Inverse Problems};

% Level 2
\node[box] (den) [below=0.7cm of inv] {Image Denoising \\ $D: U \to U$};

% Level 3
\node[box] (var) [below left=0.7cm and 1.3cm of den] {Variational Methods};
\node[box] (data) [below right=0.7cm and 1.3cm of den] {Data-Driven / Advanced Methods};

% Level 4
\node[box] (prox) [below=0.7cm of var] {Proximal Operators};

\node[box] (bm3d) [below=0.7cm of data] {BM3D};
\node[box] (nlm) [right=1.1cm of bm3d] {NLM};
\node[box] (tnrd) [right=1.1cm of nlm] {TNRD};
\node[box] (dncnn) [left=1.1cm of bm3d] {DnCNN};

% Arrow: Inverse Problems → Image Denoising
\draw[arrow] (inv) -- 
node[midway, right, font=\scriptsize, align=left]
{$\mathcal{G}=I$ (special case) \\ $v^\delta=u+\eta$} 
(den);

% Arrow: Image Denoising → Variational Methods
\draw[arrow] (den) -- 
node[midway, left, font=\scriptsize, align=left]
{\hspace{1mm} $\displaystyle 
\min_u \frac{1}{2\sigma^2}\|v^\delta-u\|^2 + \lambda \Theta(u)
$} 
(var);

% Arrow: Image Denoising → Data-Driven Methods
\draw[arrow] (den) -- 
node[midway, right, font=\scriptsize, align=left]
{\hspace{2mm} learned or adaptive prior} 
(data);

% Arrow: Variational Methods → Proximal Operators
\draw[arrow] (var) -- 
node[midway, right, font=\scriptsize]
{$D=\operatorname{prox}_{\lambda \Theta}$} 
(prox);

% Arrow: Data-Driven → BM3D
\draw[arrow] (data) -- 
% node[midway, above, font=\scriptsize]
% {collaborative filtering} 
(bm3d);

% Arrow: Data-Driven → NLM
\draw[arrow] (data) -- 
% node[midway, below, font=\scriptsize]
% {nonlocal weights} 
(nlm);

% Arrow: Data-Driven → TNRD
\draw[arrow] (data) -- 
node[midway, right, font=\scriptsize]
{\hspace{2mm} learned diffusion} 
(tnrd);

% Arrow: Data-Driven → DnCNN
\draw[arrow] (data) -- 
node[midway, left, font=\scriptsize]
{\hspace{-2mm} deep CNN prior} 
(dncnn);

% Small descriptions below boxes (no border)
\node[below =0.05cm of bm3d, font=\scriptsize] 
{\hspace{-4mm} collaborative filtering};

\node[below =0.05cm of nlm, font=\scriptsize] 
{\hspace{4mm} nonlocal weights};

\end{tikzpicture}
}
\caption{Image denoising methods within inverse problems.}
\label{fig:denoising_hierarchy}
\end{figure}
The remarkable effectiveness of image denoising in noise suppression has motivated the use of denoisers in solving more general inverse problems of the form \eqref{eq:forward_model}. In this work, we likewise employ a denoiser as an explicit regularization functional, defined in \eqref{eqn:RED regularizer}, in the context of iterative regularization method.
\subsection{Assumptions and properties of denoiser} We impose the following assumptions on the denoiser $D(\cdot)$, which are assumed to hold throughout this paper.
\begin{assumption}\label{local homogeneity} 
 The denoiser $D(\cdot)$ is assumed to be differentiable. 
Moreover, for sufficiently small $\varepsilon > 0$ and for all $u \in U$, it satisfies
\(
D\big((1+\varepsilon)u\big) = (1+\varepsilon)D(u),
\)
that is, the denoiser is locally homogeneous.  
\end{assumption}
\begin{proposition}[\cite{romano2017little}]
   Under Assumption~\ref{local homogeneity}, $D(\cdot)$ satisfies
\(
[\mathbf{J}D(u)]\,u = D(u),
\)
where $[\mathbf{J}D(u)]$ denotes the Jacobian of $D$ evaluated at $u$.
\end{proposition}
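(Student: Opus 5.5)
The identity $[\mathbf{J}D(u)]\,u = D(u)$ is an Euler-type relation for homogeneous maps. The plan is to compute the directional derivative of $D$ at $u$ in the direction $u$ itself, and to evaluate it in two ways: once through differentiability and once through local homogeneity. Throughout, $D$ is differentiable (in the Fr\'echet sense, or at least Gateaux) with derivative $\mathbf{J}D(u)$, a bounded linear operator on $U$. Fix $u \in U$.

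\textbf{Step 1.} By differentiability of $D$ at $u$, applied with increment $h = \varepsilon u$,
\[
D\big((1+\varepsilon)u\big) = D(u) + \varepsilon\,[\mathbf{J}D(u)]\,u + r(\varepsilon), \qquad \frac{\|r(\varepsilon)\|}{\varepsilon} \to 0 \ \text{as } \varepsilon \downarrow 0 .
\]
(If $u=0$, the remainder estimate is trivial.) Hence
\[
\lim_{\varepsilon\downarrow 0}\frac{D((1+\varepsilon)u)-D(u)}{\varepsilon} = [\mathbf{J}D(u)]\,u .
\]

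\textbf{Step 2.} By Assumption~\ref{local homogeneity}, for all sufficiently small $\varepsilon>0$,
\[
D\big((1+\varepsilon)u\big) - D(u) = (1+\varepsilon)D(u) - D(u) = \varepsilon D(u),
\]
so the difference quotient is identically equal to $D(u)$ for such $\varepsilon$. Its limit is therefore $D(u)$. Uniqueness of limits in $U$ then gives $[\mathbf{J}D(u)]\,u = D(u)$.

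There is no serious obstacle; the argument is a one-line calculus fact. The only point requiring care is the quantifier order in the assumption: the admissible range of $\varepsilon$ may depend on $u$. This does not matter, because the argument only needs the homogeneity relation along a sequence $\varepsilon \downarrow 0$ at the fixed $u$. We also need only a one-sided limit, since homogeneity is assumed only for $\varepsilon>0$. The one-sided directional derivative exists and coincides with $[\mathbf{J}D(u)]\,u$ as soon as $D$ is Gateaux differentiable at $u$, so Step 1 is legitimate under the weaker notion of differentiability as well.
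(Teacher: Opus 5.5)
Your proof is correct and is essentially the argument behind the cited result: the paper gives no proof of its own and defers to \cite{romano2017little}, where the identity is obtained by computing the directional derivative of $D$ at $u$ in the direction $u$ and using local homogeneity to see that the difference quotient equals $D(u)$. Your remarks on the one-sided limit, the case $u=0$, and the possible $u$-dependence of the admissible $\varepsilon$ refine that same computation and do not change it.
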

\begin{assumption}\label{ass:symmetric jacobian}
    The Jacobian $\mathbf{J}$ of $D$ is symmetric, that is $[\mathbf{J}D(u)] = [\mathbf{J}D(u)]^T.$
\end{assumption}
\begin{proposition}[\cite{reehorst2018regularization}]
 Let Assumption~\ref{local homogeneity} hold. For $\Theta_{\mathrm{red}}(\cdot)$ defined in \eqref{eqn:RED regularizer},
    \begin{equation*}
        \nabla_{u} \Theta_{\mathrm{red}}(u) = u - \frac{1}{2}D(u) - \frac{1}{2}[\mathbf{J}D(u)]^T u.
    \end{equation*}
    If, in addition, Assumption~\ref{ass:symmetric jacobian} also hold, then 
      \(
        \nabla_{u} \Theta_{\mathrm{red}}(u) = u - D(u).
  \)
\end{proposition}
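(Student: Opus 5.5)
We compute the gradient of $\Theta_{\mathrm{red}}(u)=\tfrac12\langle u,u-D(u)\rangle$ from its directional derivative, and then use the two structural assumptions on $D$ to simplify the result. The plan has three steps.

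\textbf{Step 1: directional derivative.} Write $\Theta_{\mathrm{red}}(u)=\tfrac12\|u\|^2-\tfrac12\langle u,D(u)\rangle$. Since $D$ is differentiable by Assumption~\ref{local homogeneity}, the map $u\mapsto\langle u,D(u)\rangle$ is differentiable by the product rule. For a direction $h\in U$ we get
\[
\frac{d}{dt}\Big|_{t=0}\Theta_{\mathrm{red}}(u+th)=\langle u,h\rangle-\tfrac12\langle h,D(u)\rangle-\tfrac12\langle u,[\mathbf{J}D(u)]h\rangle .
\]

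\textbf{Step 2: identify the gradient.} Move the Jacobian onto $u$ by writing $\langle u,[\mathbf{J}D(u)]h\rangle=\langle [\mathbf{J}D(u)]^Tu,h\rangle$. This requires the adjoint of the bounded linear map $\mathbf{J}D(u)$, which exists in the Hilbert space $U$. Reading off the Riesz representative of the linear functional in $h$ gives
\[
\nabla_u\Theta_{\mathrm{red}}(u)=u-\tfrac12D(u)-\tfrac12[\mathbf{J}D(u)]^Tu .
\]
This is the first claim. Strictly speaking, Step 1 and Step 2 only use differentiability of $D$. Local homogeneity matters only through the previous Proposition, which yields $[\mathbf{J}D(u)]u=D(u)$.

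\textbf{Step 3: the symmetric case.} If in addition Assumption~\ref{ass:symmetric jacobian} holds, then $[\mathbf{J}D(u)]^Tu=[\mathbf{J}D(u)]u$. By the previous Proposition this equals $D(u)$. The two half-terms then combine to $D(u)$, and we obtain $\nabla_u\Theta_{\mathrm{red}}(u)=u-D(u)$.

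\textbf{Main obstacle.} The only delicate point is justifying the identity $[\mathbf{J}D(u)]u=D(u)$ from the previous Proposition. If it needed to be re-derived, we would differentiate $D((1+\varepsilon)u)=(1+\varepsilon)D(u)$ with respect to $\varepsilon$ at $\varepsilon=0^+$. The left side gives $[\mathbf{J}D(u)]u$ by the chain rule, and the right side gives $D(u)$. The hypothesis is stated only for small $\varepsilon>0$, so only the one-sided derivative is available. This suffices, because $D$ is differentiable and therefore the one-sided directional derivative coincides with $[\mathbf{J}D(u)]u$. Everything else is routine calculus.
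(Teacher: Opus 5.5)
Your proof is correct and follows essentially the standard argument of the cited reference; the paper itself gives no proof and only cites it. The product rule yields the general formula $u-\tfrac12 D(u)-\tfrac12[\mathbf{J}D(u)]^T u$, and the symmetric case follows by combining self-adjointness with the identity $[\mathbf{J}D(u)]u=D(u)$ from the preceding proposition. You are also right that the first formula needs only differentiability, with local homogeneity entering only in the second part.
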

\begin{remark}
    In~\cite{reehorst2018regularization}, it is demonstrated that a number of popular denoisers fail to satisfy the Assumption~\ref{ass:symmetric jacobian}, including MF~\cite{huang1979fast},  NLM~\cite{buades2005non}, BM3D~\cite{dabov2007image}, TNRD~\cite{chen2016trainable}, and DnCNN~\cite{zhang2017beyond}. 
    % As a result, the corresponding \texttt{DDIR} algorithms cannot be characterized as iterative methods for the regularizer $\Theta_{\mathrm{red}}(\cdot)$ in (\ref{eqn:RED regularizer}). 
    Natural approaches to overcome the limitations imposed by Assumption~\ref{local homogeneity} and~\ref{ass:symmetric jacobian} include adopting the \emph{score-matching-by-denoising} (SMD) framework~\cite{reehorst2018regularization}, or alternatively, invoking Rockafellar function~\cite{rockafellar2009variational} to reformulate the regularization functional $\Theta_{\mathrm{red}}(\cdot)$, as detailed in Appendix~B of~\cite{cohen2021regularization}~\footnote{To relax the differentiability requirement on the denoiser, it is necessary that the residual operator \( D_{\mathrm{res}} = I - D \) be maximally cyclically monotone. In contrast, requiring the mapping \( D(\cdot) \) to be cyclically firmly nonexpansive constitutes a sufficient condition. This property is satisfied, for instance, by proximal denoisers~\eqref{eqn:variational denoising}, see \cite[Theorem B.6]{cohen2021regularization}.}.
However, since this direction is not central to the present work and the main results would remain essentially unchanged within these settings, we do not pursue this extension here.
\end{remark}
% \begin{assumption}\label{ass:linear_symmetric_denoiser}
% The denoiser $D_\sigma : U \to U$ is assumed to be
% a bounded, linear, self-adjoint, and positive definite operator satisfying
% \begin{equation}
% 0 < c \le \langle D_\sigma u, u \rangle \le C \|u\|^2,
% \qquad \forall\, u \in U,
% \end{equation}
% for some constants $c, C > 0$. 
%     \marginpar{\footnotesize \textcolor{blue}{HB: Need to check whether this assumptions is required in our case or not.}}
% \end{assumption}
It is immediate to see that, under these two assumptions the gradient descent step to minimize the  objective function 
$\mathcal{J}(u):= \frac{1}{2} \|\mathcal{G}(u) - v^\delta\|^2 + \lambda \Theta_{\mathrm{red}}(u)$ 
is given by 
\begin{align}\label{eqn:fixed_denoiser_gradient_step}
   \nonumber u_{k+1}^\delta &= u_k^\delta - \mu \nabla_{u_k^\delta} \mathcal{J}(u_k^\delta)\\ \nonumber
    & =  u_k^\delta - \mu \left(\frac{1}{2}\nabla_{u_k^\delta} (\|\mathcal{G}(u_k^\delta) - v^\delta \|^2) + \lambda \nabla_{u_k^\delta} (\Theta_\mathrm{red}(u_k^\delta)) \right) \\
    & =  u_k^\delta - \mu \left (\mathcal{G}'(u_k^\delta)^*(\mathcal{G}(u_k^\delta) - v^\delta) + \lambda (u_k^\delta - D(u_k^\delta)) \right).
\end{align}
This expression is equivalent to \eqref{eqn:DDR} provided that the step-size $\mu$ and weighted parameter $\lambda$
 is chosen appropriately in adaptive way.
% \paragraph{Examples}
% Typical examples satisfying Assumption include
% graph Laplacian denoisers of the form $D_\sigma = (I + \sigma L)^{-1}$, where $L$ is
% a symmetric graph Laplacian, classical Tikhonov-type smoothing operators, Gaussian
% and diffusion-based filters, as well as spectral filters associated with symmetric
% operators.

% Under Assumption~\ref{ass:linear_symmetric_denoiser}, there exists a convex quadratic
% functional
% \begin{equation}
% \mathcal{R}_\sigma(u)
% = \tfrac{1}{2} \langle u, (D_\sigma^{-1} - I) u \rangle,
% \end{equation}
% such that
% \[
% D_\sigma(u) = \operatorname{prox}_{\mathcal{R}_\sigma}(u) = \arg \min_{w} \frac{1}{2}\|u - w\|^2 + \sigma\mathcal{R}_\sigma(u).
% \]
% Moreover, the Jacobian of $D_\sigma$ is constant and symmetric.
% \begin{assumption}\label{assum:existence of a solution}
%     The operator $M: U \to V$ is bounded linear and there exist $u^\dagger \in U$ such that $Mu^\dagger =v.$
% \end{assumption}
We also require the following assumption on the denoiser to prove convergence and stability of our proposed method.
\begin{assumption}
\label{assum:denoiser}
Let $D : U \to U$ be a denoising operator. 
We assume that $D$ is $q$-contractive for some 
$q \in (0,1)$, i.e.,
\[
\|D(u) - D(\hat u)\| 
\le q \|u - \hat u\|,
\quad \text{for all } u, \hat u \in U.
\]
\end{assumption}
We call $u \in U$ a fixed point of $D$ iff $D(u) =u$ and we write $\operatorname{Fix}(D) = \{u \in U \; \mid \; D(u) = u\}.$ Throughout this paper we assume that  
$\operatorname{Fix}(D)$ is nonempty~\footnote{
A commonly adopted assumption is that $D(0)=0$, which is
satisfied, for instance, by denoisers of the form $D(u)=W(u)u$, as
discussed in~\cite{cohen2021regularization}. Moreover, under
Assumption~\ref{assum:existence of a solution} (A1), we have $u^\dagger \in \operatorname{Fix}(D)$,
which implies that the  set $\operatorname{Fix}(D)$ is
nonempty. Finally, all denoisers employed in our numerical
experiments satisfy this property.
}. In addition, we assume that 
every solution $u$ of the equation $\mathcal{G}(u) = v$ belongs to 
$\operatorname{Fix}(D)$.
Instead of employing a fixed denoiser within the gradient descent step 
as in~\eqref{eqn:fixed_denoiser_gradient_step}, it is desirable to 
adaptively control the denoising strength during the iteration process. 
In particular, the denoising effect should gradually diminish as the 
iterates approach the solution. To this end, we introduce a parametric 
family of denoisers $(D_h)_{h>0}$, where the parameter $h$ regulates 
the denoising strength. This family is constructed using the concept 
of averaged operators~\cite{cohen2021regularization}.
\begin{definition}
    Given a denoiser $D$ and $h \in (0, 1)$, we define the averaged denoiser $D_h = hD + (1-h)I,$ where $I$ is the identity operator. Notice it holds that $\operatorname{Fix}(D_h) = \operatorname{Fix}(D).$ 
\end{definition}
\begin{proposition}\label{Prop:2.10}
Let Assumption~\ref{assum:denoiser} hold.
For $h \in (0,1)$, let $D_h$ is the averaged denoiser
then it is contractive with Lipschitz constant
\(
L_h = 1 - h(1-q) < 1.
\) Moreover $$\|D_h(u) - u \| \to 0 \quad \text{as } h \to 0.$$
\end{proposition}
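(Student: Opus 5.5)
The plan is to expand the definition $D_h = hD + (1-h)I$ directly and treat the two assertions separately; both reduce to the triangle inequality together with Assumption~\ref{assum:denoiser}.

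\emph{Contractivity.} For arbitrary $u,\hat u \in U$ we write
\begin{equation*}
D_h(u) - D_h(\hat u) = h\bigl(D(u) - D(\hat u)\bigr) + (1-h)(u - \hat u).
\end{equation*}
Since $h\in(0,1)$, both coefficients $h$ and $1-h$ are nonnegative. Applying the triangle inequality and then the $q$-contractivity of $D$ gives
\begin{equation*}
\|D_h(u) - D_h(\hat u)\| \le hq\|u-\hat u\| + (1-h)\|u-\hat u\| = \bigl(1 - h(1-q)\bigr)\|u-\hat u\|.
\end{equation*}
Since $h>0$ and $q<1$, the product $h(1-q)$ is strictly positive, so $L_h = 1-h(1-q) < 1$. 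As a side remark, $L_h>q\ge 0$ also holds.

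\emph{Vanishing residual.} Here we use the identity
\begin{equation*}
D_h(u) - u = h\bigl(D(u) - u\bigr),
\end{equation*}
so that $\|D_h(u)-u\| = h\|D(u)-u\|$. For each fixed $u\in U$ the quantity $\|D(u)-u\|$ is a finite number independent of $h$, and therefore $\|D_h(u)-u\|\to 0$ as $h\to 0$.

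If uniformity is wanted, the same bound gives uniform convergence on bounded sets. Pick any $\bar u\in\operatorname{Fix}(D)$, which is nonempty by standing assumption. Then
\begin{equation*}
\|D(u)-u\| \le \|D(u)-D(\bar u)\| + \|\bar u - u\| \le (1+q)\|u-\bar u\|.
\end{equation*}
Hence $\sup_{u\in\mathcal B(r,\bar u)}\|D_h(u)-u\|\le h(1+q)r\to 0$ for every $r>0$.

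Neither part presents a real obstacle. The only point to handle carefully is the meaning of the limit statement: it should be read as pointwise convergence, with the uniform-on-bounded-sets version following from the linear estimate $\|D_h(u)-u\| \le h(1+q)\|u-\bar u\|$.
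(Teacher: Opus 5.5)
Your proof is correct and follows the paper's argument: expand $D_h = hD + (1-h)I$, apply the triangle inequality with $q$-contractivity to get $L_h = 1-h(1-q)$, and use $\|D_h(u)-u\| = h\|D(u)-u\|$ for the limit. The uniform-on-bounded-sets refinement via a fixed point $\bar u$ is a valid extra that the paper does not include. Nonemptiness of $\operatorname{Fix}(D)$ also follows directly from Banach's fixed-point theorem, since $D$ is a contraction on the complete space $U$.
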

\begin{proof}
For arbitrary $u, \hat{u} \in U$, we have
\begin{align*}
\|D_h(u)-D_h(\hat{u})\|
% &= \|h\big(D(u)-D(\)\big)+(1-h)(x_1-x_2)\|\\
&\le
h\|D(u)-D(\hat u)\| + (1-h)\|u -\hat{u}\| \\
&\le
\big(hq + 1-h\big)\|u-\hat{u}\| =
\big(1 - h(1-q)\big)\|u -\hat{u}\|.
\end{align*}
Since $0<h<1$ and $0< q<1$, it follows that $L_h=1-h(1-q)<1$. Also $\|D_h(u) - u \| = h\|D(u)-u\| \to 0$ as $h \to 0.$
\end{proof}
The above result offers a systematic way to modulate the denoising strength through an external parameter by performing appropriate averaging.
We refer to the collection of operators 
$(D_h)_{0<h<1}$, with $D_h : U \to U$ 
for each $h \in (0,1)$, 
as an admissible denoiser family. The condition $\|D_h(u) - u \| \to 0$ is natural, as it requires the denoising effect to diminish asymptotically in the vanishing noise regime.
%%%%%%%%%%%%%%%%%%%%%%%%%%%%%%%%%%%%%%%%%%%%%%%%%%%%%%%%%%%%%%%%%%%%%% 
\section{Denoiser Driven Iterative Regularization}\label{sec:Convergence}
In this section, we first present the motivation underlying the proposed \texttt{DDIR} method and subsequently provide its theoretical foundation by establishing its monotonicity property, through the analysis of its early stopping criterion. To this end, we consider an admissible family of denoisers $(D_h)_{1> h > 0}$. The subsequent section is devoted to a detailed description of the algorithm and the corresponding step-size selection strategy.
\subsection{The method}\label{subsec:the method}
% In this subsection, we present a detailed description of the proposed \texttt{DDIR} method~\eqref{eqn:DDR}. 
The formulation begins with the introduction of the step-size $\mu_k^\delta$ and the weighted parameter $\lambda_k^\delta$ to enhance the convergence speed of the method~\eqref{eqn:DDR}. It is sensible to update these parameters dynamically at each step so that the iterates consistently remain near a solution of~(\ref{eq:forward_model}). With this motivation, we define the parameters as
\begin{equation}\label{eqn:step size mu}
   0 < \gamma \leq  \mu_k^\delta = 
        \min \left\{ \frac{\gamma
    _0\|\mathcal{G}(u_k^\delta) - v^\delta\|^2}{\|\mathcal{G}'(u_k^\delta)^* (\mathcal{G}(u_k^\delta) -v^\delta)\|^2}, \gamma_1 \right\},
\end{equation}
\begin{equation}\label{eqn:weighted parameter lambda}
   \lambda_k^\delta =  \begin{cases}
      \min \left\{ \frac{\nu_0 \|\mathcal{G}(u_k^\delta) - v^\delta\|^2}{\|u_k^\delta - D_{h_k}(u_k^\delta)\|^2}, \nu_1 \right\} , & \text{if } \|u_k^\delta - D_{h_k}(u_k^\delta)\| \neq 0\\ 
      0, &\text{if } \|u_k^\delta - D_{h_k}(u_k^\delta)\| = 0,
    \end{cases} 
\end{equation}
where $\gamma_0, \gamma_1, \nu_0, \nu_1$, and $\gamma$ denote fixed positive constants, and $\{h_k\}_{k \in \mathbb{N}}$ is a sequence satisfying $h_k \in (0,1)$ for all $k \in \mathbb{N}$ and $h_k \to 0$ as $k \to \infty$.
\begin{remark}
Since $\mathcal{G}'$ is bounded, the step-size $\mu_k^\delta$ admits a uniform positive lower bound independent of the iteration. Indeed,
\[
\mu_k^\delta \ge  \min\left\{\frac{\gamma_0}{\|\mathcal{G}'(u_k^\delta)\|^2},\, \gamma_1\right\} \ge     \min\left\{\frac{\gamma_0}{B_\mathcal{G}^2},\, \gamma_1\right\} 
=: \hat{\gamma}> 0.
\]
Hence, any fixed choice $0 < \gamma \le \hat{\gamma}$ guarantees an iteration-independent admissible lower bound of the step-size \(\mu_k^\delta\). 
In the special case $\|\mathcal{G}'(u_k^\delta)^* (\mathcal{G}(u_k^\delta) -v^\delta)\|=0$, we have $\mu_k^\delta=\gamma_1$, so the same lower bound remains valid.
\end{remark}
The pseudo-code corresponding to the iterative scheme~\eqref{eqn:DDR} 
is presented in \Cref{alg:DDIR}, which summarizes the implementation of \texttt{DDIR}.
\begin{algorithm}
\caption{\texttt{DDIR}}
\label{alg:DDIR}
\begin{algorithmic}[1]
\STATE{Given: $\mathcal{G}, v^\delta, \delta, \tau, D.$}
\STATE{Fixed parameters: $ \gamma, \gamma_0, \gamma_1, \nu_0, \nu_1.$}
\STATE{Initialize: $u_0^\delta = u_0$ and set $k = 0$.}
\WHILE{$\|\mathcal{G}(u_k^\delta) - v^\delta\| > \tau \delta$}
    \STATE{Choose $h_k \in (0, 1)$ and define $D_{h_k} = h_k D + (1-h_k)I.$}
    \STATE{Compute $D_{h_k}(u_k^\delta)$.}
    \STATE{Update
    \vspace{-4mm}
    \[
    u_{k+1}^\delta = u_k^\delta 
    - \mu_k^\delta \mathcal{G}'(u_k^\delta)^* 
    (\mathcal G(u_k^\delta) - v^\delta)
    - \lambda_k^\delta 
    (u_k^\delta - D_{h_k}(u_k^\delta)).
    \]
   Here $\mu_k^\delta$ and $\lambda_k^\delta$ are defined as in 
    \eqref{eqn:step size mu} and 
    \eqref{eqn:weighted parameter lambda}, respectively.}
    \STATE{Set $k \leftarrow k+1$.}
\ENDWHILE
\RETURN{$u_{k_{\mathrm{dp}}}^\delta$ (approximate solution obtained at the stopping index).}
\end{algorithmic}
\end{algorithm}
\begin{remark}
    The parameter $h$ controls the scale of the denoiser-induced prior and determines how strongly the denoiser modifies the current iterate. It governs the resolution at which prior information is enforced. In the proposed  scheme, 
$h_k \to 0$ as $k \to \infty$ ensures asymptotic unbiasedness, while stability is achieved through early stopping and the cumulative effect of the denoiser during the iterations.
\end{remark}
%%%%%%%%%%%%%%%%%%%%%%%%%%%%%%%%%%%%%%%%%%%%%%%%%%%%%%%%%%%%%%%%%%
 \subsection{Monotonicity and finite termination}
We begin by presenting an auxiliary result that underpins the monotonicity and finite termination properties, which are the main focus of this section.
\begin{proposition}
  Under Assumption~\ref{assum:denoiser} and $u^\dagger \in \operatorname{Fix}(D)$, we have
  \begin{equation}\label{Prop:result}
\langle u^\dagger-u,\; u-D_h(u)\rangle
\le - \frac{c_q}{h} 
\|u- D_h(u)\|^2 \le- c_q 
\|u- D_h(u)\|^2,
  \end{equation}
for all $u \in U$,  where $c_q=\frac{1-q}{(1+q)^2}$.
\end{proposition}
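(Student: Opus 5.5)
The idea is to strip off the averaging parameter first, reducing the claim to a statement about the residual of $D$ itself, and then use the contractivity of $D$ together with $D(u^\dagger)=u^\dagger$.

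\emph{Step 1: remove $h$.} By the definition of the averaged denoiser, $u - D_h(u) = h\,(u - D(u))$. Set $r := u - D(u)$ and $e := u - u^\dagger$. Then the left-hand side of \eqref{Prop:result} is $-h\langle e, r\rangle$. The middle term is
\[
-\tfrac{c_q}{h}\|u-D_h(u)\|^2 = -\tfrac{c_q}{h}\,h^2\|r\|^2 = -c_q h\|r\|^2 .
\]
Since $h>0$, the first inequality is therefore equivalent to the $h$-free estimate
\[
\langle e, r\rangle \ge c_q \|r\|^2 .
\]

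\emph{Step 2: prove the $h$-free estimate.} Because $u^\dagger\in\operatorname{Fix}(D)$, we can write $r = e - (D(u) - D(u^\dagger))$. By Assumption~\ref{assum:denoiser}, $\|D(u)-D(u^\dagger)\|\le q\|e\|$. This gives two bounds:
\begin{itemize}
\item a lower bound, via Cauchy--Schwarz,
\[
\langle e, r\rangle = \|e\|^2 - \langle e, D(u)-D(u^\dagger)\rangle \ge (1-q)\|e\|^2 ;
\]
\item an upper bound, via the triangle inequality, $\|r\| \le (1+q)\|e\|$, i.e.\ $\|e\|^2 \ge \|r\|^2/(1+q)^2$.
\end{itemize}
Combining them yields
\[
\langle e,r\rangle \ge \frac{1-q}{(1+q)^2}\,\|r\|^2 = c_q\|r\|^2 .
\]
This is the estimate from Step 1, so multiplying back by $-h$ gives the first inequality in \eqref{Prop:result}.

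\emph{Step 3: the second inequality.} Since $h\in(0,1)$ and $c_q>0$ (because $q<1$), we have $c_q/h \ge c_q$. As $\|u-D_h(u)\|^2\ge 0$, it follows that $-\frac{c_q}{h}\|u-D_h(u)\|^2 \le -c_q\|u-D_h(u)\|^2$.

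There is no real obstacle here. The only point needing care is the bookkeeping of powers of $h$ in Step 1: the middle term scales like $h^2/h = h$, which matches the single factor of $h$ on the left-hand side. This scaling is exactly why the bound can carry the factor $1/h$ rather than just $c_q$.
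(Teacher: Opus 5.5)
Your proof is correct and follows essentially the same route as the paper: both combine the inner-product bound $\langle u-u^\dagger,\,u-D_h(u)\rangle \ge h(1-q)\|u-u^\dagger\|^2$ with the residual bound $\|u-D_h(u)\|\le h(1+q)\|u-u^\dagger\|$. The only difference is cosmetic. You factor out $h$ via $u-D_h(u)=h(u-D(u))$ at the start, whereas the paper keeps $D_h$ and uses its Lipschitz constant $1-h(1-q)$ from Proposition~\ref{Prop:2.10}; you also justify the second inequality explicitly, which the paper leaves implicit.
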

\begin{proof}
Since $u^\dagger \in \operatorname{Fix}(D)$, it follows that 
$u^\dagger \in \operatorname{Fix}(D_h)$ for all $h \in (0,1)$. 
Consequently, we have
\(
D_h(u^\dagger) = u^\dagger.
\)
% {\color{red} previously you wrote $\|D_h(u)-D_h(u^\dagger)\| \le q \|u-u^\dagger\|$, but this is wrong, we have $\|D(u)-D(u^\dagger)\|\le q \|u-u^\dagger\|$. But this makes the estimate better}
Hence, we obtain
% \[
% u - D_h(u)
% = (u - u^\dagger) - \bigl(D_h(u) - D_h(u^\dagger)\bigr).
% \]
% Taking the inner product with $u^\dagger - u$, 
\begin{align}\label{eq:key1}
\nonumber \langle u^\dagger - u,\; u - D_h(u) \rangle
&= \langle u^\dagger - u,\; (u - u^\dagger) - \bigl(D_h(u) - D_h(u^\dagger)\bigr) \rangle\\ \nonumber
&=\langle u^\dagger - u,\; u - u^\dagger \rangle
 - \langle u^\dagger - u,\; D_h(u) - D_h(u^\dagger) \rangle \\ \nonumber
&\le -\|u-u^\dagger\|^2 + \|u-u^\dagger\|\|D_h(u)-D_h(u^\dagger)\| \\ \nonumber
&\le - \|u-u^\dagger\|^2 + (1-h(1-q)\|u-u^\dagger\|^2\\
&\le -h(1-q)\|u-u^\dagger\|^2,
\end{align}
 where we have applied the Cauchy--Schwarz inequality together with Proposition~\ref{Prop:2.10} to obtain last inequality.
Using again $D_h(u^\dagger)=u^\dagger$ and Assumption~\ref{assum:denoiser} to estimate
\begin{align*}
\|u - D_h(u)\| = h \| u - D(u)\|
\le h( \|u - u^\dagger\|
   + \|D(u) - D(u^\dagger)\| \le h(1+q)\|u - u^\dagger\|.
\end{align*}
Hence,
\begin{equation}\label{eq:key2}
\|u - u^\dagger\|^2
\ge \frac{1}{h^2(1+q)^2}\|u - D_h(u)\|^2.
\end{equation}
Combining \eqref{eq:key1} and \eqref{eq:key2}, we conclude that
\[
\langle u^\dagger - u,\; u - D_h(u)\rangle
\le -\frac{(1-q)}{h(1+q)^2}\|u - D_h(u)\|^2
= -\frac{c_q}{h} \|u - D_h(u)\|^2.
\]
The proof is thus complete.
\end{proof}
In the next result, we will show the monotonicity of error $\|u_k^\delta - u^\dagger\|$ for $0 \leq k \leq k_{\mathrm{dp}}$. 
%%%%%%%%%%%%%%%%%%%%%%%%%%%%%%%%%%%%%%%%%%%%%%%%%%%%%%%%%%%
\begin{lemma}[Monotonicity]\label{lemma:monotonicity}
Suppose that for Algorithm~\ref{alg:DDIR}, Assumption~\ref{assum:existence of a solution} and Assumption~\ref{assum:denoiser} hold. Assume that $k^\dagger \leq k_{\mathrm{dp}}$ is an integer, where $k_{\mathrm{dp}}$ is the stopping index as in \eqref{eqn:dp} with $\tau >1$. Moreover, assume that 
\begin{equation}\label{assum: On C}
   \mathcal C:=  \gamma - \nu_0(\nu_1 - c_q) - \gamma_1 \left(\gamma_0 + \zeta + \frac{1 + \zeta}{\tau}\right) > 0,
\end{equation}
 with $c_q \leq \nu_1$.
Then, for any solution $u^\dagger$ of $\mathcal{G}(u)=v$, the following holds.
\begin{enumerate}
    \item[\emph{(i)}] For all $0 \leq k < k^\dagger,$ 
    \begin{equation}\label{eqn:monotonicity}
\|u_{k+1}^\delta - u^\dagger\| \leq \|u_{k}^\delta - u^\dagger\|
\end{equation}
\item[\emph{(ii)}]The iterates $u_k^\delta \in \mathcal{B}(2\wp, u_0)$ for all $0 \leq k \leq k^\dagger$ and the partial sum of squared residual norm up to $k^\dagger$ is bounded, i.e.,
% \vspace{-4mm}
\begin{equation*}
 \sum_{k=0}^{k^\dagger}\|\mathcal{G}(u_k^\delta) - v^\delta\|^2 < \frac{\wp^2}{2\mathcal C}.
\end{equation*}
\end{enumerate}
\end{lemma}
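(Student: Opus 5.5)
The plan is the standard Landweber/Hanke-type argument: expand the squared error and bound each cross term, proceeding by induction on $k$. Write $e_k:=u_k^\delta-u^\dagger$, $r_k:=\mathcal G(u_k^\delta)-v^\delta$, $g_k:=\mathcal G'(u_k^\delta)^*r_k$ and $w_k:=u_k^\delta-D_{h_k}(u_k^\delta)$. Then $u_{k+1}^\delta-u^\dagger=e_k-\mu_k^\delta g_k-\lambda_k^\delta w_k$, and therefore
\begin{align*}
\|e_{k+1}\|^2-\|e_k\|^2
&= -2\mu_k^\delta\langle e_k,g_k\rangle-2\lambda_k^\delta\langle e_k,w_k\rangle
+\|\mu_k^\delta g_k+\lambda_k^\delta w_k\|^2 .
\end{align*}
The induction hypothesis will be $u_j^\delta\in\mathcal B(\wp,u^\dagger)\subset\mathcal B(2\wp,u_0)$ for $j\le k$. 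This lets us apply (A3) at $u_k^\delta$ and $u^\dagger$; monotonicity then propagates it to $k+1$.

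\textbf{Step 1: the data-fit term.} Write
\[
\langle e_k,g_k\rangle=\langle \mathcal G'(u_k^\delta)e_k,r_k\rangle,\qquad
\mathcal G'(u_k^\delta)e_k=r_k-\big[\mathcal G(u_k^\delta)-\mathcal G(u^\dagger)-\mathcal G'(u_k^\delta)e_k\big]-(v-v^\delta).
\]
The tangential cone condition \eqref{eqn:TCC} gives
\[
-\langle e_k,g_k\rangle\le-\|r_k\|^2+\zeta\|r_k\|\|\mathcal G(u_k^\delta)-v\|+\delta\|r_k\|\le -\|r_k\|^2+\big(\zeta+\tfrac{1+\zeta}{\tau}\big)\|r_k\|^2 .
\]
Here we used $\|\mathcal G(u_k^\delta)-v\|\le\|r_k\|+\delta$, and, since $k<k^\dagger\le k_{\mathrm{dp}}$, also $\delta<\|r_k\|/\tau$. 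For the quadratic part, \eqref{eqn:step size mu} gives $(\mu_k^\delta)^2\|g_k\|^2\le\mu_k^\delta\gamma_0\|r_k\|^2$, and \eqref{eqn:weighted parameter lambda} gives $(\lambda_k^\delta)^2\|w_k\|^2\le\lambda_k^\delta\nu_0\|r_k\|^2\le\nu_1\nu_0\|r_k\|^2$.

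\textbf{Step 2: the denoiser term.} By the preceding Proposition \eqref{Prop:result} applied with $h=h_k$, we have $-2\lambda_k^\delta\langle e_k,w_k\rangle\le-2c_q\lambda_k^\delta\|w_k\|^2$. This negative quantity absorbs part of $(\lambda_k^\delta)^2\|w_k\|^2$. Since $\lambda_k^\delta\le\nu_1$ and $\lambda_k^\delta\|w_k\|^2\le\nu_0\|r_k\|^2$, the combined denoiser contribution is at most $\nu_0(\nu_1-c_q)\|r_k\|^2$ up to constant factors; the hypothesis $c_q\le\nu_1$ keeps this term signed correctly.

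\textbf{Main obstacle: the cross term.} The term $2\mu_k^\delta\lambda_k^\delta\langle g_k,w_k\rangle$ is the delicate step, because it has no sign. I would first try Young's inequality, splitting it into $(\mu_k^\delta)^2\|g_k\|^2+(\lambda_k^\delta)^2\|w_k\|^2$. That doubles the constants above, so the precise factors of two must be matched against $\mathcal C$. If they do not match, I would adjust the weighting in Young's inequality so that the coefficient collection reproduces exactly $\gamma-\nu_0(\nu_1-c_q)-\gamma_1(\gamma_0+\zeta+\tfrac{1+\zeta}{\tau})$. The aim is the estimate $\|e_{k+1}\|^2-\|e_k\|^2\le-2\mathcal C\|r_k\|^2$, obtained by using $\gamma\le\mu_k^\delta\le\gamma_1$ to bound the negative $-\mu_k^\delta\|r_k\|^2$ part by $\gamma$ and the positive parts by $\gamma_1$. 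Since $\mathcal C>0$, this yields (i).

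\textbf{Step 3: conclusion.} Summing the telescoping inequality over $k$ gives $2\mathcal C\sum_k\|r_k\|^2\le\|e_0\|^2\le\wp^2$, using (A1). Together with $\|u_k^\delta-u_0\|\le\|e_k\|+\|e_0\|\le2\wp$, this gives (ii). The strict inequality, and the inclusion of the index $k^\dagger$ in the sum, follow by a similar step or by a slightly sharper bookkeeping of the final term.
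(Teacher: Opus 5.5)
Your proposal follows the paper's proof step for step. The paper also expands $\|e_{k+1}\|^2-\|e_k\|^2$ and splits $\|\mu_k^\delta g_k+\lambda_k^\delta w_k\|^2\le 2(\mu_k^\delta)^2\|g_k\|^2+2(\lambda_k^\delta)^2\|w_k\|^2$. It then bounds the data part via the TCC, $\gamma\le\mu_k^\delta\le\gamma_1$ and $\delta<\|r_k\|/\tau$, bounds the denoiser part via \eqref{Prop:result}, and telescopes. With this equal-weight splitting the constants assemble to exactly $-2\mathcal C\|r_k\|^2$, so no reweighting is needed.

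On the final bookkeeping the paper is no sharper than you: it sums up to $k^\dagger$ and obtains only $\le\wp^2/(2\mathcal C)$. Moreover, when $k^\dagger=k_{\mathrm{dp}}$ the one-step estimate is not actually available at $k=k^\dagger$, since there $\|r_{k^\dagger}\|\le\tau\delta$. In that case the provable conclusion is the non-strict bound for the sum up to $k^\dagger-1$.
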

\begin{proof} We begin by proving, via mathematical induction, that 
\(
u_k^\delta \in \mathcal{B}(2\wp, u_0) \)  for all \( 0 \leq k \leq k^\dagger.
\)
The claim is clearly satisfied for $k = 0$ as $u_0^\delta = u_0$. 
Assume now that it holds for all indices $k < k^\dagger$. 
We proceed to verify that the assertion remains valid for the subsequent iteration. For that, we define
\[
z_k^\delta := u_k^\delta - u^\dagger \quad \text{and} \quad d_k^\delta := \mu_k^\delta \mathcal{G}'(u_k^\delta)^* (\mathcal G(u_k^\delta) - v^\delta) + \lambda_k^\delta(u_k^\delta - D_{h_k}(u_k^\delta)).
\]
Under these definitions and the update rule of 
Algorithm~\ref{alg:DDIR}, we derive 
\begin{align}\label{eqn: Gamma M and D}
  \nonumber  \|z_{k+1}^\delta\|^2 &- \|z_k^\delta\|^2 = \|d_k^\delta\|^2 - 2\langle z_k^\delta, d_k^\delta \rangle \\ \nonumber
    &\leq 2\Big[
(\mu_k^\delta)^2\|\mathcal{G}'(u_k^\delta)^* (\mathcal G(u_k^\delta) - v^\delta)\|^2
+ \mu_k^\delta \langle \mathcal{G}'(u_k^\delta)(u^\dagger - u_k^\delta),\, \mathcal{G}(u_k^\delta) - v^\delta \rangle \\
&\quad
+ (\lambda_k^\delta)^2 \|u_k^\delta - D_{h_k}(u_k^\delta)\|^2
+ \lambda_k^\delta \langle u^\dagger - u_k^\delta,\, u_k^\delta - D_{h_k}(u_k^\delta) \rangle
\Big]
=: 2(\Gamma_\mathcal{G} + \Gamma_D). 
\end{align}
Here, $\Gamma_\mathcal{G}$ and $\Gamma_{D}$ denote the terms corresponding to  $\mathcal{G}$ and denoiser $D$, respectively, appearing on the right-hand side of the  inequality. We next analyze $\Gamma_\mathcal{G}$ and $\Gamma_{D}$ separately and derive bounds for each in terms of the squared norm of the residual of $\mathcal{G}$ at the $k$-th iteration. By using \eqref{eqn:dp}, \eqref{eqn:TCC} and \eqref{eqn:step size mu}  we get
\begin{align}\label{eqn:Gamma M} 
\nonumber
\Gamma_\mathcal{G} &= (\mu_k^\delta)^2\|\mathcal{G}'(u_k^\delta)^* (\mathcal G(u_k^\delta) - v^\delta)\|^2     + \mu_k^\delta \langle \mathcal{G}'(u_k^\delta)(u^\dagger - u_k^\delta),\, \mathcal{G}(u_k^\delta) - v^\delta \rangle \\ \nonumber
& \leq \gamma_0 \gamma_1\|\mathcal{G}(u_k^\delta) - v^\delta\|^2  + \mu_k^\delta \langle \mathcal{G}(u_k^\delta) - v - \mathcal{G}'(u_k^\delta)( u_k^\delta- u^\dagger), \mathcal{G}(u_k^\delta) - v^\delta \rangle   \\ \nonumber
& \quad - \mu_k^\delta\|\mathcal{G}(u_k^\delta) - v^\delta\|^2 + \mu_k^\delta\langle v - v^\delta, \mathcal{G}(u_k^\delta) - v^\delta \rangle \\ \nonumber
& \leq \left(\gamma_0 \gamma_1   - \gamma  + \frac{\gamma_1}{\tau}\right) \|\mathcal{G}(u_k^\delta) - v^\delta\|^2 + \zeta \mu_k^\delta \| \mathcal{G}(u_k^\delta) - v\| \|\mathcal{G}(u_k^\delta) - v^\delta \|\\ \nonumber
& \leq \left(\gamma_0 \gamma_1   - \gamma  + \frac{\gamma_1}{\tau}\right) \|\mathcal{G}(u_k^\delta) - v^\delta\|^2 + \zeta \gamma_1(1 + 1/\tau) \| \mathcal{G}(u_k^\delta) - v^\delta\|^2 \\ 
& \leq -\left[ \gamma - \gamma_1 \left(\gamma_0 + \zeta + \frac{1 + \zeta}{\tau}\right) \right] \| \mathcal{G}(u_k^\delta) - v^\delta\|^2.
\end{align}
In contrast, to estimate $\Gamma_D$ we use \eqref{eqn:weighted parameter lambda} and \eqref{Prop:result} to get 
\begin{align}\label{eqn:Gamma D}
 \nonumber  \Gamma_D &= (\lambda_k^\delta)^2 \|u_k^\delta - D_{h_k}(u_k^\delta)\|^2  + \lambda_k^\delta \langle u^\dagger - u_k^\delta, u_k^\delta - D_{h_k}(u_k^\delta) \rangle \\ \nonumber
 & \leq  (\lambda_k^\delta)^2 \|u_k^\delta - D_{h_k}(u_k^\delta)\|^2  - \lambda_k^\delta c_q \|u_k^\delta - D_{h_k}(u_k^\delta)\|^2 \\
 & \leq  \lambda_k^\delta (\lambda_k^\delta  -c_q  ) \|u_k^\delta - D_{h_k}(u_k^\delta)\|^2  \leq \nu_0( \nu_1 - c_q)\| \mathcal{G}(u_k^\delta) - v^\delta\|^2. 
\end{align}
% where we have used (\ref{eqn: delta}), (\ref{eqn: discrepancy}) and \eqref{alpha} in deriving \eqref{ek, dk}. 
% Similarly, by using the definitions of $\alpha_k^\delta$ and $ \beta_k^\delta$ with the inequality $(a_1 + a_2)^2 \leq 2(a_1^2 + a_2^2)$ for $a_1, a_2 \in \mathbb{R}$, we can estimate the first term of (\ref{split eqn}) as
% \begin{align}\label{dk}
%  \nonumber   \|d_k^\delta\|^2 &= \| \alpha_k^\delta A^*(Au_k^\delta - v^\delta) + \beta_k^\delta \Delta_{u_k^\delta}u_k^\delta\|^2 \\ \nonumber
%     & \leq 2 \eta_0\eta_1 \|Au_k^\delta - v^\delta\|^2 + 2\nu_0 \nu_1\|Au_k^\delta - v^\delta\|^2 \\ 
%     & \leq 2(\eta_0\eta_1+ \nu_0 \nu_1)\|Au_k^\delta - v^\delta\|^2. 
% \end{align}
where $c_q = \frac{1-q}{(1+q)^2}$.
 Further, by using \eqref{eqn:Gamma M} and \eqref{eqn:Gamma D} in \eqref{eqn: Gamma M and D}, we get
\begin{equation}\label{Concluding eqn of first Lemma}
     \|z_{k+1}^\delta\|^2 - \|z_k^\delta\|^2 \leq -2\left[ \gamma - \nu_0(\nu_1 - c_q) - \gamma_1 \left(\gamma_0 + \zeta + \frac{1 + \zeta}{\tau}\right) \right]\| \mathcal{G}(u_k^\delta) - v^\delta\|^2.
\end{equation}
Finally, incorporating (\ref{assum: On C}) into (\ref{Concluding eqn of first Lemma}) leads to
\begin{equation}\label{eqn:lemma1 short sum}
     \|z_{k+1}^\delta\|^2 - \|z_k^\delta\|^2 \leq -2\mathcal C\| \mathcal{G}(u_k^\delta) - v^\delta\|^2.
\end{equation}
which gives the required monotonicity for $0 \leq k <k^\dagger$.
We now turn to the proof of assertion (ii).  
From inequality \eqref{eqn:lemma1 short sum}, it follows that the error sequence 
is monotonically non-increasing, that is,
\[
\|u_{k+1}^\delta - u^\dagger\| 
\leq \|u_k^\delta - u^\dagger\| 
\leq \cdots 
\leq \|u_0^\delta - u^\dagger\| =  \|u_0 - u^\dagger\|
\leq \wp.
\]
In conjunction with Assumption~\ref{assum:existence of a solution} (A1), this implies that 
$u_{k+1}^\delta \in \mathcal{B}(2\wp, u_0)$.
Next, summing inequality \eqref{eqn:lemma1 short sum} 
from $k = 0$ to $k = k^\dagger$, we obtain
\begin{equation}\label{Concluding eqn of first Lemma in C}
\sum_{k=0}^{k^\dagger} 
\left( \|z_{k+1}^\delta\|^2 - \|z_k^\delta\|^2 \right)
\leq
-2\mathcal C \sum_{k=0}^{k^\dagger}
\|\mathcal{G}(u_k^\delta) - v^\delta\|^2.
\end{equation}
Since the left-hand side is telescopic, we deduce
% \[
% \|z_{k^\dagger+1}^\delta\|^2 - \|z_0^\delta\|^2
% \leq
% -2\mathcal C \sum_{k=0}^{k^\dagger}
% \|\mathcal{G}(u_k^\delta) - v^\delta\|^2.
% \]
% Consequently,
\[
\sum_{k=0}^{k^\dagger}
\|\mathcal{G}(u_k^\delta) - v^\delta\|^2
\leq
\frac{1}{2\mathcal C} \|z_0^\delta\|^2
=
\frac{1}{2\mathcal C} \|u_0 - u^\dagger\|^2 \leq \frac{\wp^2}{2\mathcal C}
< \infty.
\]
This completes the proof of assertion (ii).
\end{proof}
\subsection*{Positivity of $\mathcal C$} 
Observe that \( c_q = \frac{1 - q}{(1 + q)^2} < 1 \) for \( q \in (0,1) \), where \( q \) is associated with the selected denoiser, and \( \zeta \in [0,1) \) is dictated by the underlying problem. The remaining parameters of \eqref{assum: On C} are at our disposal, chosen such that \( \tau > 1 \) and \( 0 < \gamma \leq \gamma_1 \). Furthermore, the inequality \( c_q \leq \nu_1 \) implies that \( \nu_1 - c_q \geq 0 \), which ensures that both subtractive terms appearing in \( \mathcal{C} \) are nonnegative. Hence
the condition $\mathcal C>0$ is equivalent to
\begin{equation}\label{eq:C_positive_condition}
\nu_0(\nu_1 - c_q)
+
\gamma_1\left(
\gamma_0 + \zeta + \frac{1+\zeta}{\tau}
\right)
<
\gamma.
\end{equation}
Now, let
\(
\mathcal{H} := \gamma_0 + \zeta + \frac{1+\zeta}{\tau}.
\)
Then \eqref{eq:C_positive_condition} can be written as
\(
\nu_0(\nu_1 - c_q) + \gamma_1 \mathcal{H} < \gamma.
\)
Since $0<\gamma\le\gamma_1$, dividing by $\gamma_1$ yields
\[
\frac{\nu_0(\nu_1 - c_q)}{\gamma_1} + \mathcal{H}
<
\frac{\gamma}{\gamma_1}
\le 1.
\]
Consequently, a necessary structural condition for $\mathcal C>0$ is
\(
\mathcal{H} < 1.
\)
If this condition fails, then even in the most favorable case 
$\nu_0=0$ and $\gamma=\gamma_1$ ($\mu_k^\delta$ is constant and $\lambda_k^\delta = 0$), we obtain
\(
\mathcal C = \gamma_1(1-\mathcal{H}) \le 0,
\)
and therefore $\mathcal C>0$ is impossible.

Assuming $\mathcal{H}<1$, the positivity of $\mathcal C$ is ensured by choosing
the parameters such that
\(
\nu_0(\nu_1 - c_q)
<
\gamma - \gamma_1 \mathcal{H}.
\)
In particular:
\begin{itemize}
\item If $\nu_1 = c_q$, then the first term vanishes and the condition
reduces to
\(
\gamma_1 \mathcal{H} < \gamma.
\)
\item If $\nu_1 > c_q$, then it suffices to impose
\[
\nu_0
<
\frac{\gamma - \gamma_1 \mathcal{H}}{\nu_1 - c_q}, \quad \text{provided that } \gamma - \gamma_1 \mathcal{H} > 0.
\]
\end{itemize}
Thus, under the structural restriction 
$\gamma_0 + \zeta + \frac{1+\zeta}{\tau} < 1$,
the positivity of $\mathcal C$ can always be achieved by choosing 
$\nu_0$ sufficiently small (and $\nu_1$ close to $c_q$ if desired).
\begin{remark}\label{eqn:C_for_linear}
If the forward operator is linear, then $\zeta = 0$, and consequently the constant $\mathcal C$ reduces to 
\vspace{-2mm}
\[
\mathcal C
=
\gamma
-
\nu_0(\nu_1 - c_q)
-
\gamma_1\left(
\gamma_0 + \frac{1}{\tau}
\right).
\vspace{-2mm}
\]
In this case, the necessary structural condition becomes
\(
\gamma_0 + \frac{1}{\tau} < 1.
\)
If this inequality holds, then $\mathcal C>0$ provided that
\(
\nu_0(\nu_1 - c_q)
<
\gamma -
\gamma_1\left(
\gamma_0 + \frac{1}{\tau}
\right).
\)
In particular, if $\nu_1 = c_q$, the positivity condition reduces to
\(
\gamma_1\left(\gamma_0 + \frac{1}{\tau}\right) < \gamma.
\)
Compared to the nonlinear case, where $\zeta\in(0,1)$, this restriction is 
weaker, since no additional $\zeta$ dependent term appears.
\end{remark}
Building on Lemma~\ref{lemma:monotonicity}, we are now prepared to show that Algorithm~\ref{alg:DDIR} is well-defined, i.e., the iterative procedure terminates in finitely many steps.

\begin{theorem}
Suppose the assumptions of Lemma~\ref{lemma:monotonicity} hold and let Algorithm~\ref{alg:DDIR} be initialized with an initial guess $u_0^\delta$. Then, the algorithm is guaranteed to terminate after a finite number of iterations. Specifically, there exists a  finite stopping index $k_{\mathrm{dp}} \in \mathbb{N}$ such that
\[
\| \mathcal{G}(u_{k_{\mathrm{dp}}}^\delta) - v^\delta \| \leq \tau \delta 
< \| \mathcal{G}(u_k^\delta) - v^\delta \|, 
\qquad 0 \leq k < k_{\mathrm{dp}}.
\]
\end{theorem}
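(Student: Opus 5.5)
The proof goes by contradiction, using the summability estimate of Lemma~\ref{lemma:monotonicity}(ii). Suppose the algorithm never terminates, so that $\|\mathcal{G}(u_k^\delta) - v^\delta\| > \tau\delta$ for every $k \ge 0$. Then $k_{\mathrm{dp}} = \infty$. Every finite integer $k^\dagger$ then satisfies $k^\dagger \le k_{\mathrm{dp}}$, so Lemma~\ref{lemma:monotonicity} applies to each such $k^\dagger$. In particular, all iterates stay in $\mathcal{B}(2\wp,u_0)$, which guarantees that $\mathcal{G}'(u_k^\delta)$ is defined and the update is well posed at every step.

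For an arbitrary finite $k^\dagger$, part (ii) of the lemma combined with the assumed lower bound on the residuals gives
\[
(k^\dagger+1)\,\tau^2\delta^2 < \sum_{k=0}^{k^\dagger}\|\mathcal{G}(u_k^\delta)-v^\delta\|^2 \le \frac{\wp^2}{2\mathcal C}.
\]
Since $\delta>0$, $\tau>1$ and $\mathcal C>0$, the left-hand side grows without bound as $k^\dagger\to\infty$, while the right-hand side is a fixed constant. This is a contradiction. Hence some index $k$ satisfies $\|\mathcal{G}(u_k^\delta)-v^\delta\|\le\tau\delta$, and $k_{\mathrm{dp}}$, defined as the minimal such index in \eqref{eqn:dp}, is finite. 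The same inequality applied with $k^\dagger = k_{\mathrm{dp}}-1$ gives the explicit bound $k_{\mathrm{dp}} \le \wp^2/(2\mathcal C\tau^2\delta^2)$.

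Both displayed inequalities in the statement follow from the definition of $k_{\mathrm{dp}}$ as a minimum. At the index $k_{\mathrm{dp}}$ the residual is at most $\tau\delta$. For every $0\le k<k_{\mathrm{dp}}$ the stopping criterion was not yet met, so the residual is strictly larger than $\tau\delta$.

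The main obstacle is a possible circularity: Lemma~\ref{lemma:monotonicity} is stated for $k^\dagger\le k_{\mathrm{dp}}$, and in the contradiction argument $k_{\mathrm{dp}}$ is assumed infinite. This needs to be checked explicitly. The proof of the lemma uses the discrepancy bound $\|v-v^\delta\|\le\delta<\tau^{-1}\|\mathcal{G}(u_k^\delta)-v^\delta\|$ only for indices $k<k^\dagger$, that is, only at steps where the algorithm has not yet stopped. Under the contradiction hypothesis this bound holds at every index, so invoking the lemma for every finite $k^\dagger$ is legitimate.
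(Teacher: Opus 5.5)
Your proof is correct and matches the paper's: both assume the residual exceeds $\tau\delta$ on $k=0,\dots,n$, use the summability bound of Lemma~\ref{lemma:monotonicity}(ii) to get $(n+1)\tau^2\delta^2 < \wp^2/(2\mathcal C)$, and let $n\to\infty$ to reach a contradiction. Your explicit bound $k_{\mathrm{dp}} \le \wp^2/(2\mathcal C\tau^2\delta^2)$ and your circularity check are useful additions, though strictly part (ii) sums the one-step inequality up to $k^\dagger$ inclusive, so it also uses the discrepancy bound at $k=k^\dagger$, which is harmless under your hypothesis.
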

\begin{proof}
Assume that there exists an integer $n \geq 0$ such that
\begin{equation}\label{eqn:Thm_3.6}
    \|\mathcal{G}(u_k^\delta) - v^\delta \| > \tau \delta,
\qquad k = 0,1,\ldots,n .
\end{equation}
Then, from Lemma~\ref{lemma:monotonicity} (ii), we obtain
\(
 \sum_{k=0}^{n} \| \mathcal{G}(u_k^\delta) - v^\delta \|^2
< \frac{\wp^2}{2\mathcal C} < \infty.
\)
Using \eqref{eqn:Thm_3.6}, we further deduce that
\begin{equation}\label{eq:finite_termination}
(n+1) \tau^2 \delta^2
\leq  \sum_{k=0}^{n} \| \mathcal{G}(u_k^\delta) - v^\delta \|^2 < \infty.
\end{equation}
If no finite index $k_{\mathrm{dp}}$ exists for which the stopping 
criterion~\eqref{eqn:dp} is satisfied, then passing to the limit 
$n \to \infty$ in~\eqref{eq:finite_termination} yields a contradiction. 
Consequently, Algorithm~3.1 necessarily terminates after finitely many iterations.
\end{proof}
\section{Convergence analysis}\label{Sec:Con_analysis}
This section is devoted to establishing the convergence analysis of 
Algorithm~\ref{alg:DDIR} by examining the behavior of 
$u_{k_{\mathrm{dp}}}^\delta$ as $\delta \to 0$.   For that purpose, first we establish the convergence for exact data case.
\subsection{Convergence for exact data}\label{Subsec: Convergence for exact data}
In this subsection, we examine the counterpart of 
Algorithm~\ref{alg:DDIR} in the exact data setting, 
which is formulated as follows.
\begin{algorithm}[H]
\caption{\texttt{DDIR} with switching to gradient descent}
\label{alg:exactdata}
\begin{algorithmic}[1]
\STATE{Given: $\mathcal{G}, v, D$.}
\STATE{Fixed parameters: $\gamma, \gamma_0, \gamma_1, \nu_0, \nu_1$.}
\STATE{Initialize: $u_0 = u_0^0$, $k=0$, and set $\texttt{switch}= \texttt{false}$.}
\WHILE{$k \ge 0$}
    \STATE{Choose $h_k \in (0, 1)$ and define $D_{h_k} = h_k D + (1-h_k)I.$}
    \STATE{Compute $D_{h_k}(u_k)$.}
\IF{$\texttt{switch} = \texttt{false}$}
    \STATE Compute
    \[
    \mu_k = \min\left\{
    \frac{\gamma_0\|\mathcal{G}(u_k)-v\|^2}
    {\|\mathcal{G}'(u_k)^*(\mathcal{G}(u_k)-v)\|^2},
    \gamma_1
    \right\}.
    \]

    \IF{$\|u_k - D_{h_k}(u_k)\| = 0$}
        \STATE Set $\lambda_k = 0$ and $\texttt{switch}=\texttt{true}$.
    \ELSE
        \STATE Set
        \vspace{-2mm}
        \[
        \lambda_k =
        \min\left\{
        \frac{\nu_0\|\mathcal{G}(u_k)-v\|^2}
        {\|u_k - D_{h_k}(u_k)\|},
        \nu_1
        \right\}.
        \]
    \ENDIF
\ELSE
    \STATE Set $\lambda_k = 0$.
    \STATE Compute $\mu_k$ as above.
\ENDIF

\STATE Update
\vspace{-4mm}
\[
u_{k+1}
=
u_k
-
\mu_k \mathcal{G}'(u_k)^*(\mathcal{G}(u_k)-v)
-
\lambda_k (u_k - D_{h_k}(u_k)).
\vspace{-4mm}
\]
\STATE $k \leftarrow k+1$.
\ENDWHILE
\end{algorithmic}
\end{algorithm}
\begin{remark}\label{remakr:denoiser fix}
If, at some iteration $k \ge 0$, the iterate satisfies 
$u_k \in \operatorname{Fix}(D) = \operatorname{Fix}(D_{h_k})$, 
that is, $u_k = D_{h_k}(u_k)$, then the \texttt{DDIR} iteration reduces to
\begin{equation}\label{eqn:adaptive_landweber}
    u_{k+1} = u_k - \mu_k \mathcal{G}'(u_k)^*(\mathcal{G}(u_k) - v),
\end{equation}
which coincides with the adaptive Landweber method~\cite{clason2019bouligand,hanke1991accelerated}. Consequently, all
subsequent iterations proceed according to \eqref{eqn:adaptive_landweber}, since the additional denoising term becomes inactive and may no longer contribute beneficially to the reconstruction.
\end{remark}
To establish convergence of the proposed method in the exact data setting, we show that the sequence $\{u_k\}_{k \geq 0}$ generated by 
Algorithm~\ref{alg:exactdata} is a Cauchy sequence. 
We begin by stating a preliminary result concerning the behavior 
of this sequence.
%%%%%%%%%%%%%%%%%%%%%%%%%%%%%%%%%%%%%%%%%%%%%%%%%%%%%%%%%%%%%%%%%%%%%%%%%%%%%%%%%%%%%%%%%%%%%%%%%%%%%%%%%%%%%%%%%%%%%%%%%%%%%%%%%%%%%%%%
\begin{lemma}\label{lemma: monotonicity for exact data}
    Let Assumptions~\ref{assum:existence of a solution} and~\ref{assum:denoiser} hold. Let $\{u_k\}_{k \in \mathbb{N}}$ be the sequence of iterates generated by the \Cref{alg:exactdata}. Then for the solution $u^\dagger$ of $\mathcal{G}(u)=v$, there holds
 \begin{equation}
     \|u_{k+1} - u^\dagger\|^2 - \|u_k - u^\dagger\|^2 \leq -2\mathcal C_0 \|\mathcal{G}(u_k) - v\|^2 \quad \forall \; k \in \mathbb{N},
 \end{equation}
 where $\mathcal C_0: = \gamma - \gamma_1(\gamma_0  + \zeta) - \nu_0(\nu_1 - c_q) > 0.$ Also, $u_k \in \mathcal{B}(2\wp, u_0)$ for all $k \in \mathbb{N}$ and the sequence $\{\|u_k - u^\dagger\|\}_{k \in \mathbb{N}}$ is monotonically decreasing and 
 \begin{equation}\label{sum for non noisy case}
     \sum_{k=0} ^\infty \|\mathcal{G}(u_k) - v\|^2 \leq \frac{1}{2\mathcal C_0} \|u_0 - u^\dagger\|^2 < \infty.
 \end{equation}
 This means that $\|\mathcal{G}(u_k) - v\| \to 0$ as $k \to \infty$ i.e.,  the residual norm converges to zero. 
\end{lemma}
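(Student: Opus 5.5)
The plan is to repeat the argument of Lemma~\ref{lemma:monotonicity} with $\delta = 0$, proceeding by induction on $k$. The inductive claim is $u_k \in \mathcal{B}(2\wp, u_0)$ together with the one-step energy inequality. The base case $k=0$ is trivial. For the inductive step we set $z_k := u_k - u^\dagger$ and $d_k := \mu_k \mathcal{G}'(u_k)^*(\mathcal{G}(u_k)-v) + \lambda_k(u_k - D_{h_k}(u_k))$. Expanding $\|z_{k+1}\|^2 - \|z_k\|^2 = \|d_k\|^2 - 2\langle z_k, d_k\rangle$ and using $(a+b)^2 \le 2a^2 + 2b^2$ gives, exactly as in \eqref{eqn: Gamma M and D}, the bound $2(\Gamma_\mathcal{G} + \Gamma_D)$ with $v^\delta$ replaced by $v$.

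For $\Gamma_\mathcal{G}$ we use \eqref{eqn:step size mu} to get $(\mu_k)^2\|\mathcal{G}'(u_k)^*(\mathcal{G}(u_k)-v)\|^2 \le \gamma_0\gamma_1\|\mathcal{G}(u_k)-v\|^2$. We then write
\[
\langle \mathcal{G}'(u_k)(u^\dagger-u_k), \mathcal{G}(u_k)-v\rangle = \langle \mathcal{G}(u_k)-v-\mathcal{G}'(u_k)(u_k-u^\dagger), \mathcal{G}(u_k)-v\rangle - \|\mathcal{G}(u_k)-v\|^2 .
\]
Applying the tangential cone condition \eqref{eqn:TCC} with $\bar u = u_k$ and $u = u^\dagger$, both in $\mathcal{B}(2\wp,u_0)$ by the induction hypothesis and (A1), bounds the first term by $\zeta\|\mathcal{G}(u_k)-v\|^2$. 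Since $v$ is exact, the noise term $\langle v - v^\delta,\cdot\rangle$ and the $1/\tau$ contributions disappear. Using $\gamma \le \mu_k \le \gamma_1$, this yields $\Gamma_\mathcal{G} \le -[\gamma - \gamma_1(\gamma_0+\zeta)]\|\mathcal{G}(u_k)-v\|^2$.

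For $\Gamma_D$ we split into cases. If $\lambda_k = 0$ (the switch case, or $u_k \in \operatorname{Fix}(D)$), then $\Gamma_D = 0$. Otherwise we invoke \eqref{Prop:result}, which needs $u^\dagger \in \operatorname{Fix}(D)$ (standing assumption), to get $\Gamma_D \le \lambda_k(\lambda_k - c_q)\|u_k - D_{h_k}(u_k)\|^2$. This is bounded by $\nu_0(\nu_1 - c_q)\|\mathcal{G}(u_k)-v\|^2$ once we use $\lambda_k \le \nu_1$ and $\lambda_k\|u_k-D_{h_k}(u_k)\|^2 \le \nu_0\|\mathcal{G}(u_k)-v\|^2$.

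\textbf{Main obstacle.} The second bound above is the delicate point. As printed, Algorithm~\ref{alg:exactdata} has $\|u_k-D_{h_k}(u_k)\|$ (not squared) in the denominator of $\lambda_k$. I would read this as a typo for the squared norm, consistent with \eqref{eqn:weighted parameter lambda}. If the unsquared version is taken literally, one instead needs a bound like $\|u_k - D_{h_k}(u_k)\| \le h_k(1+q)\|u_k-u^\dagger\| \le 2\wp$ to absorb the extra factor, which would change the constant. I would proceed with the squared version. We also need the condition $c_q \le \nu_1$ so that the sign of $\nu_1 - c_q$ is right; if instead $\lambda_k < c_q$, then $\Gamma_D \le 0$ directly. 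Summing the two estimates gives the claimed inequality with $\mathcal{C}_0 = \gamma - \gamma_1(\gamma_0+\zeta) - \nu_0(\nu_1-c_q)$. This constant is positive because it is at least $\mathcal{C} > 0$ from \eqref{assum: On C}.

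The remaining claims follow in the usual way. The inequality gives $\|u_{k+1}-u^\dagger\| \le \|u_k - u^\dagger\| \le \|u_0-u^\dagger\| \le \wp$, hence $\|u_{k+1}-u_0\| \le 2\wp$, which closes the induction and gives monotonicity. Telescoping from $0$ to $N$ gives $2\mathcal{C}_0\sum_{k=0}^N\|\mathcal{G}(u_k)-v\|^2 \le \|u_0-u^\dagger\|^2$. Letting $N\to\infty$ yields \eqref{sum for non noisy case}, and convergence of the series forces $\|\mathcal{G}(u_k)-v\| \to 0$.
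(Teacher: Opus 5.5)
Your proposal is correct and follows the same route as the paper, whose proof only says the lemma follows by direct analogy with Lemma~\ref{lemma:monotonicity} (the noise and $1/\tau$ terms drop out) and that $\mathcal C_0>0$ is verified as for $\mathcal C$; your identity $\mathcal C_0=\mathcal C+\gamma_1(1+\zeta)/\tau$ makes that immediate once \eqref{assum: On C} is in force. You were right to read the unsquared denominator of $\lambda_k$ in Algorithm~\ref{alg:exactdata} as a typo for the squared one in \eqref{eqn:weighted parameter lambda}: the paper's "analogy" tacitly needs this, and taken literally the printed denominator only yields the bound with the extra factor $\|u_k-D_{h_k}(u_k)\|\le 2\wp$ in the $\nu_0(\nu_1-c_q)$ term, as you note.
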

\begin{proof}
  The assertion follows by a direct analogy with the proof of Lemma~\ref{lemma:monotonicity}. Moreover, the positivity of $\mathcal C_0$ can be established using the same arguments employed to verify the positivity of $\mathcal C$.
\end{proof}
In the following result, we discuss several consequences of Algorithm~\ref{alg:exactdata} that will be used in the subsequent analysis.
%%%%%%%%%%%%%%%%%%%%%%%%%%%%%%%%%%%%%%%%%%%%%%%%%%%%%%%%%%%%%%%%%%%%%%%%%%%%%%%%%%%%%%%%%%%%%%%%%%%%%%%%%%%%%%%%%%%%%%
\begin{lemma}\label{lem:DDIR-stagnation}
Let Assumptions~\ref{assum:existence of a solution} 
and~\ref{assum:denoiser} hold, and consider 
Algorithm~\ref{alg:exactdata}.
\begin{enumerate}
\item[\emph{(i)}]
If $\mathcal{G}(u_k) = v$ for some $k \ge 0$, then $u_m = u_k$ for all $m > k$.
\item[\emph{(ii)}]
If $u_{k+1} = u_k$ for some $k \ge 0$, then $\mathcal{G}(u_k) = v$ and $u_m= u_k$ for all $m > k$.
\item[\emph{(iii)}]
Assume that for some $k \ge 0$,
\(
u_k\in \operatorname{Fix}(D)
 \text{ and } 
u_{k+1} = u_k.
\)
Moreover, assume that
\[
\operatorname{Fix}(D) \cap \{u \in U : \mathcal{G}(u) = v\} = \{u^\dagger\}.
\]
Then $u_k = u^\dagger$.
\end{enumerate}
\end{lemma}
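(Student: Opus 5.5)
For (i), I would argue by induction on $m$ using the update rule of Algorithm~\ref{alg:exactdata}. If $\mathcal{G}(u_k)=v$, then the gradient term $\mu_k\mathcal{G}'(u_k)^*(\mathcal{G}(u_k)-v)$ vanishes regardless of the value of $\mu_k$; the degenerate case $0/0$ in the formula for $\mu_k$ is interpreted as $\mu_k=\gamma_1$, as in the Remark following \eqref{eqn:weighted parameter lambda}. For the weight, the formula in Algorithm~\ref{alg:exactdata} gives $\lambda_k=\min\{\nu_0\cdot 0/\|u_k-D_{h_k}(u_k)\|,\nu_1\}=0$ whenever $u_k\neq D_{h_k}(u_k)$, and $\lambda_k=0$ by definition otherwise (also in the \texttt{switch} branch). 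Hence $u_{k+1}=u_k$, so $\mathcal{G}(u_{k+1})=v$ again, and induction gives $u_m=u_k$ for all $m>k$.

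For (ii), I would use Lemma~\ref{lemma: monotonicity for exact data}. If $u_{k+1}=u_k$, its left-hand side is $0$, so $0\le -2\mathcal C_0\|\mathcal{G}(u_k)-v\|^2$. Since $\mathcal C_0>0$, this forces $\mathcal{G}(u_k)=v$, and part (i) then gives stagnation for all $m>k$. This is the cleanest route. It avoids analysing the possible cancellation between the gradient step and the denoising step in $d_k$, which would be the hard point if one tried to argue directly from the update formula.

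For (iii), part (ii) gives $\mathcal{G}(u_k)=v$, and by hypothesis $u_k\in\operatorname{Fix}(D)$. Thus $u_k\in\operatorname{Fix}(D)\cap\{u:\mathcal{G}(u)=v\}=\{u^\dagger\}$, so $u_k=u^\dagger$.

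The only genuine obstacle is checking that Lemma~\ref{lemma: monotonicity for exact data} really applies at index $k$. Its proof reuses \eqref{Prop:result}, which requires $u^\dagger\in\operatorname{Fix}(D)$; the standing assumption that every solution lies in $\operatorname{Fix}(D)$ supplies this. It also requires $u_k\in\mathcal{B}(2\wp,u_0)$ so that \eqref{eqn:TCC} holds, and this follows from the lemma's own inductive argument.

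One further subtlety should be noted. Algorithm~\ref{alg:exactdata} writes $\|u_k-D_{h_k}(u_k)\|$ without a square in the denominator of $\lambda_k$, which would alter the estimate \eqref{eqn:Gamma D}. I will read this as a typo for the squared norm, matching \eqref{eqn:weighted parameter lambda}. Either way, $\lambda_k=0$ when the residual vanishes, so part (i) is unaffected.
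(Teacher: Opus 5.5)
Your proof is correct, but parts (i) and (ii) take a different route from the paper.

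In (i), the paper kills the denoising term by asserting $u_k-D_{h_k}(u_k)=0$. That step rests on the standing assumption that every solution of $\mathcal{G}(u)=v$ lies in $\operatorname{Fix}(D)$. You instead observe that the rule for $\lambda_k$ gives $\lambda_k=0$ whenever the residual vanishes, in every branch of Algorithm~\ref{alg:exactdata}. This is more self-contained and does not use that assumption.

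In (ii), the paper argues directly from the stationarity equation $\mu_k\mathcal{G}'(u_k)^*(\mathcal{G}(u_k)-v)+\lambda_k(u_k-D_{h_k}(u_k))=0$. Pairing it with $u_k-u^\dagger$ and using \eqref{eqn:TCC} and \eqref{Prop:result} gives
\[
0\ \ge\ \mu_k(1-\zeta)\|\mathcal{G}(u_k)-v\|^2+\lambda_k c_q\|u_k-D_{h_k}(u_k)\|^2 .
\]
Both terms are nonnegative, so the cancellation you were worried about cannot occur, and $\mathcal{G}(u_k)=v$ follows from $\mu_k>0$. This is just the cross term $-2\langle u_k-u^\dagger,\,u_k-u_{k+1}\rangle$ behind Lemma~\ref{lemma: monotonicity for exact data}. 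Your route invokes the full inequality, including the control of $\|u_k-u_{k+1}\|^2$, which is identically zero here.

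What the paper's version buys is robustness. At index $k$ it uses only $\mu_k>0$, $\lambda_k\ge 0$ and $u_k\in\mathcal{B}(2\wp,u_0)$ (needed for TCC). It is insensitive to the caps $\gamma_1,\nu_1$, to the constant $\mathcal C_0$, and to whether the denominator of $\lambda_k$ is squared. Your version does depend on that last point. The stated $\mathcal C_0$ matches only the squared reading you adopt. However, the paper's later proof of Theorem~\ref{Convergence for exact data} uses $\lambda_j\|u_j-D_{h_j}(u_j)\|\le\nu_0\|\mathcal{G}(u_j)-v\|^2$, i.e.\ the unsquared reading. Under that reading the monotonicity constant would have to be adjusted; this is still possible, since $\|u_k-D_{h_k}(u_k)\|$ is bounded on the ball.

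For (iii), you invoke (ii), whereas the paper re-derives $\mathcal{G}(u_k)=v$ from the reduced Landweber step and TCC. The two are equivalent, and yours is shorter.
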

\begin{proof}
(i)
Suppose that $\mathcal{G}(u_k) = v$ for some $k \ge 0$. Then
\(
\mathcal{G}'(u_k)^*(\mathcal{G}(u_k)-v)=0.
\)
Moreover, by the fixed-point property of the denoiser,
\(
u_k - D_{h_k}(u_k) = 0.
\)
% {\color{blue} why does this hold? I think we may not need this propertry, as $\lambda_k$ is zero if the discrepancy vanishes} 
Hence, from the update rule,
\[
u_{k+1}
= u_k - \mu_k \mathcal{G}'(u_k)^*(\mathcal{G}(u_k)-v) - \lambda_k (u_k - D_{h_k}(u_k))
= u_k.
\]
Repeating the same argument inductively yields $u_m = u_k$ for all $m > k$.

\noindent
(ii) Assume that $u_{k+1} = u_k$ for some $k \ge 0$. Then, by the update formula,
\[
\mu_k \mathcal{G}'(u_k)^*(\mathcal{G}(u_k)-v) + \lambda_k(u_k - D_{h_k}(u_k)) = 0.
\]
Taking the inner product with $u_k - u^\dagger$ gives
\begin{align*}
0
&= \mu_k \langle \mathcal{G}'(u_k)^*(\mathcal{G}(u_k)-v), u_k - u^\dagger \rangle
 + \lambda_k \langle u_k - D_{h_k}(u_k), u_k - u^\dagger \rangle \\
 &\geq  \mu_k\langle \mathcal{G}(u_k)-v,\mathcal{G}'(u_k)(u_k - u^\dagger )\rangle + \lambda_k c_q \|u_k - D_{h_k}(u_k)\|^2 \\
&\geq \mu_k(1- \zeta) \|\mathcal{G}(u_k) - v\|^2
 + \lambda_k c_q \|u_k - D_{h_k}(u_k)\|^2 ,
\end{align*}
where we have used TCC~\eqref{eqn:TCC} and  Proposition~\ref{Prop:result}, which gives
\(
\langle u_k - u^\dagger,\; u_k - D_{h_k}(u_k) \rangle
\ge c_q \|u_k - D_{h_k}(u_k)\|^2 \ge 0.
\)
Hence,
\(
\mu_k \|\mathcal{G}(u_k) - v\|^2 \le 0,
\)
which implies
\(
\mathcal{G}(u_k) = v.
\)
Using part~(i), we conclude that $u_m = u_k$ for all $m > k$.

\noindent
(iii) Observe 
from the update rule of Algorithm~\ref{alg:exactdata}, we have
\[
u_{k+1}
= u_k - \mu_k \mathcal{G}'(u_k)^*(\mathcal{G}(u_k)-v)
- \lambda_k \bigl(u_k - D_{h_k}(u_k)\bigr).
\]
Since $u_k\in \operatorname{Fix}(D) =\operatorname{Fix}(D_{h_k})$ it implies that $u_k = D_{h_k}(u_k)$. By assumption, the denoiser term vanishes and
\(
u_{k+1} = u_k - \mu_k \mathcal{G}'(u_k)^*(\mathcal{G}(u_k)-v).
\)
Using the additional assumption $u_{k+1} = u_k$, we obtain
\(
\mu_k \mathcal{G}'(u_k)^*(\mathcal{G}(u_k)-v)=0.
\)
If $\mathcal{G}(u_k) \neq v$, then by the definition of $\mu_k$ we have $\mu_k > 0$, which yields
\(
\mathcal{G}'(u_k)^*(\mathcal{G}(u_k)-v) =0,
\)
and applying TCC~\eqref{eqn:TCC} yields $0 \ge (1-\zeta)\|\mathcal{G}(u_k)-v\|^2$,
a contradiction. Hence it must hold that
\(
\mathcal{G}(u_k) = v.
\)
Therefore, $u_k \in \operatorname{Fix}(D) \cap \{u \in U : \mathcal{G}(u) = v\}$.  
By the uniqueness assumption on this intersection, we conclude that
\(
u_k = u^\dagger.
\)
% This completes the proof.
\end{proof}
%%%%%%%%%%%%%%%%%%%%%%%%%%%%%%%%%%%%%%%%%%%%%%%%%%%%%%%%%%%%%%%%%%%%%%%%%%%%%%%%%%%%%%%%%%%%%%%%%%%%%%%%%%%%%%%%%%%%%
  \noindent In what follows, we characterize the convergence behavior for the noise-free case.
\begin{theorem}\label{Convergence for exact data}
   Suppose the conditions stated in \Cref{lemma: monotonicity for exact data} are satisfied. Then the iterates $\{u_k\}_{k \geq 0}$ produced by \Cref{alg:exactdata} converges to $u^\dagger$, which is a solution of $(\ref{eq:forward_model})$ with $\delta = 0$.
\end{theorem}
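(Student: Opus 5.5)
The argument follows the classical Hanke--Neubauer--Scherzer scheme for Landweber-type methods. First, show that $\{u_k\}$ is a Cauchy sequence. Then show that its limit solves $\mathcal G(u)=v$. Finally, identify the limit with $u^\dagger$.

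By \Cref{lemma: monotonicity for exact data}, for \emph{any} solution $\hat u\in\mathcal B(2\wp,u_0)$ the error $e_k:=u_k-\hat u$ has a monotonically decreasing norm. Hence $\|e_k\|\to\varepsilon\ge 0$. Fix $j\ge k$ and choose $l\in\{k,\dots,j\}$ with
\[
\|\mathcal G(u_l)-v\|\le\|\mathcal G(u_i)-v\|\quad\text{for all } k\le i\le j .
\]
Then
\[
\|e_j-e_k\|\le\|e_j-e_l\|+\|e_l-e_k\|,
\qquad
\|e_j-e_l\|^2=2\langle e_l-e_j,e_l\rangle+\|e_j\|^2-\|e_l\|^2 ,
\]
and similarly for $\|e_l-e_k\|^2$. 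The squared-norm differences tend to $\varepsilon^2-\varepsilon^2=0$, so it suffices to show that the cross terms $\langle e_l-e_j,e_l\rangle$ and $\langle e_l-e_k,e_l\rangle$ tend to zero.

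Write
\[
e_l-e_j=\sum_{i=l}^{j-1}\Bigl(\mu_i\mathcal G'(u_i)^*(\mathcal G(u_i)-v)+\lambda_i\bigl(u_i-D_{h_i}(u_i)\bigr)\Bigr).
\]
The gradient part is handled exactly as for Landweber. Using \eqref{eqn:TCC},
\[
|\langle \mathcal G(u_i)-v,\mathcal G'(u_i)(u_l-\hat u)\rangle|\le(1+\zeta)\|\mathcal G(u_i)-v\|\bigl(2\|\mathcal G(u_i)-v\|+\|\mathcal G(u_l)-v\|\bigr)\le 3(1+\zeta)\|\mathcal G(u_i)-v\|^2 ,
\]
where the last step uses the minimality of $l$. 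Multiplying by $\mu_i\le\gamma_1$ and summing, the gradient contribution is bounded by a tail of $\sum\|\mathcal G(u_i)-v\|^2$, which tends to zero by \eqref{sum for non noisy case}.

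The denoiser part is the main obstacle. It does not involve residuals in the inner product, so I will use the step-size rule instead: $\lambda_i\|u_i-D_{h_i}(u_i)\|$ is controlled via \eqref{eqn:weighted parameter lambda}. With the algorithm's definition (denominator $\|u_i-D_{h_i}(u_i)\|$ in \Cref{alg:exactdata}), this gives
\[
\lambda_i\|u_i-D_{h_i}(u_i)\|\le\nu_0\|\mathcal G(u_i)-v\|^2 .
\]
With the squared-denominator version, it gives $\lambda_i\|u_i-D_{h_i}(u_i)\|\le\sqrt{\nu_0\nu_1}\,\|\mathcal G(u_i)-v\|$. In the first case, Cauchy--Schwarz together with $\|e_l\|\le\|u_0-\hat u\|$ bounds the denoiser contribution by $\nu_0\|u_0-\hat u\|\sum_{i\ge k}\|\mathcal G(u_i)-v\|^2\to0$. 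In the second case I would use $\|e_l\|$ only through the bound $\|u_i-D_{h_i}(u_i)\|\le h_i(1+q)\|e_i\|$ from the proof of Proposition~\eqref{Prop:result}. I would then try to combine $h_i\to0$ with summability, or fall back on the first-case form as written in \Cref{alg:exactdata}. Once the cross terms vanish, $\{e_k\}$, and hence $\{u_k\}$, is Cauchy and converges to some $u^*\in\mathcal B(2\wp,u_0)$. By (A2) and $\|\mathcal G(u_k)-v\|\to0$, we get $\mathcal G(u^*)=v$.

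To identify $u^*=u^\dagger$, I will use the standing assumption that every solution lies in $\operatorname{Fix}(D)$, together with the uniqueness hypothesis $\operatorname{Fix}(D)\cap\{\mathcal G(u)=v\}=\{u^\dagger\}$ as in \Cref{lem:DDIR-stagnation}(iii). Since $u^*$ is a solution, it lies in this intersection, so $u^*=u^\dagger$. If that uniqueness is not intended, the conclusion should be read as "converges to a solution $u^\dagger$" of \eqref{eq:forward_model} with $\delta=0$. In that case, the monotonicity of $\|u_k-u^*\|$ applied with $\hat u=u^*$ already yields convergence to this particular solution.
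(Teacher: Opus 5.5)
Your proposal is correct and follows essentially the same route as the paper. Both use the Hanke--Neubauer--Scherzer Cauchy argument with a minimal-residual index and the TCC bound $(1+\zeta)\|r_i\|(2\|r_i\|+\|r_l\|)$ for the gradient part; for the denoiser part, both use $\lambda_i\|u_i-D_{h_i}(u_i)\|\le\nu_0\|r_i\|^2$, which comes from the unsquared denominator in \Cref{alg:exactdata} (the only case you need), and both conclude with continuity of $\mathcal G$ for the limit. Two points in your favour: because that bound holds trivially when $\lambda_i=0$, you avoid the paper's case split at the switch index $k_*$; and you correctly note that identifying the limit with a prescribed $u^\dagger$ requires an extra uniqueness hypothesis (otherwise the statement must be read as convergence to some solution, which is how the paper effectively uses it in \Cref{AHB.thm4}), whereas the paper simply asserts $\hat u=u^\dagger$.
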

\begin{proof}
    For $\delta =0,$ we define $z_k:= u_k - u^\dagger$, $r_k:=\mathcal{G}(u_k) -v$ and $k_* := \inf\{k: k\in {\mathbb N} \mbox{ and } u_k \in \operatorname{Fix}(D)\}$. Now suppose that  $k_* >l \geq k$, we choose an integer $m$ with $l \geq m \geq k$ such that
    \begin{equation}\label{rn rp ineq}
        \|r_m\| \leq \| r_n\|, \quad \forall \hspace{1mm} k \leq n \leq l.
    \end{equation}
By the triangle inequality, we obtain the following estimate
\begin{equation}\label{xk triangular}
    \|z_l - z_k\| \leq \|z_l - z_m\| + \|z_m - z_k\|,
\end{equation}    
where 
\begin{align}
    \|z_l - z_m\|^2 &= 2\langle z_m - z_l, z_m \rangle + \|z_l\|^2 - \|z_m\|^2, \label{xl_xn} \\
    \|z_m - z_k\|^2 &= 2\langle z_k - z_m, z_m \rangle + \|z_m\|^2 - \|z_k\|^2. \label{xn_xm}
\end{align}
From Lemma \ref{lemma: monotonicity for exact data}, it follows that the sequence $\{\|z_k\|\}_{k \geq 0}$ is monotonically decreasing and bounded below by $0$. Hence, the sequence is convergent. In particular, there exists a constant $z_* \geq 0$ such that
\(
\lim_{k \to \infty} \|z_k\| = z_*.
\)
Consequently, we have
\begin{align}
     \lim_{k \to \infty }\|z_l - z_m\|^2 &= 2 \lim_{k \to \infty }\langle z_m - z_l, z_m \rangle + z_*^2 - z_*^2, \label{xl xm limit}\\
     \lim_{k \to \infty }\|z_m - z_k\|^2 &= 2 \lim_{k \to \infty }\langle z_k - z_m, z_m \rangle + z_*^2 - z_*^2.\label{xm xk limit}
\end{align} 
Our objective is to show that $\{z_k\}_{k \geq 0}$ is a Cauchy sequence. In support of this, we claim that $\langle z_m - z_l, z_m \rangle \to 0$ as $k \to \infty$. To verify this claim, observe that
\begin{align}
 \nonumber   |\langle z_m - z_l, z_m\rangle| &= |\langle u_m - u_l, z_m\rangle| \\ \nonumber
    &\hspace{-12mm} \leq \left| \left\langle \sum_{j=m}^{l-1} \mu_j \mathcal{G}'(u_j)^*(\mathcal{G}(u_j) - v) + \lambda_j(u_{j} - D_{h_j}(u_j)), u_m - u^\dagger \right\rangle \right| \\ \nonumber
     & \hspace{-12mm} \leq \sum_{j=m}^{l-1}\left| \left\langle  \mu_j \mathcal{G}'(u_j)^*(\mathcal{G}(u_j) - v), u_m - u^\dagger \right\rangle \right| + \sum_{j=m}^{l-1}\left| \left\langle \lambda_j( u_j - D_{h_j}(u_j)) , u_m - u^\dagger \right\rangle \right| \\ \nonumber
     &\hspace{-12mm} \leq \gamma_1 \sum_{j=m}^{l-1}\left| \left\langle  \mathcal{G}(u_j) - v,\mathcal{G}'(u_j)(u_m - u^\dagger) \right\rangle \right| + \sum_{j=m}^{l-1}\lambda_j \left| \left\langle  u_j - D_{h_j}(u_j), u_m - u^\dagger \right\rangle \right|\\ \nonumber
     &\hspace{-12mm} \leq \gamma_1(1+\zeta) \sum_{j=m}^{l-1}\|r_j\|(2\|r_j\| + \|r_m\|) + 3\wp\nu_0\sum_{j=m}^{l-1}\|r_j\|^2, 
\end{align}
where, we have utilized the bound $\mu_j \le \gamma_1$, decomposed $\mathcal{G}'(u_j)(u_m-u^\dagger) = \mathcal{G}'(u_j)(u_j-u^\dagger) - \mathcal{G}'(u_j)(u_j-u_m)$ to apply the TCC~\eqref{eqn:TCC} , and used the relation $\mathcal{G}(u^\dagger) =v$ in deriving the first term. For the second term we
 incorporated    the fact that $u_k \in \mathcal{B}(2\wp, u_0)$ for all $k \geq 0$,  along with the bound $\lambda_j \|u_j - D_{h_j}(u_j)\| \le \nu_0 \|r_j\|^2$ derived from the definition of $\lambda_j$. By applying (\ref{rn rp ineq}), we reformulate the preceding inequality as
\begin{align}\label{eqn:z_m x_l_in_terms_of_residual}
  |\langle z_m - z_l, z_m\rangle|  \leq 3\left(\gamma_1(1+\zeta) + \wp\nu_0\right)\sum_{j=m}^{l-1}\|r_j\|^2.
\end{align}
Proceeding similarly for $|\langle z_k - z_m, z_m \rangle|$, we may write
\begin{equation}\label{eqn:z_k x_m_in_terms_of_residual}
 |\langle z_k - z_m, z_m \rangle| \leq   3\left(\gamma_1(1+\zeta) + \wp\nu_0\right)\sum_{j=k}^{l-1}\|r_j\|^2.
\end{equation}
These bounds, together with \eqref{sum for non noisy case}, imply that $|\langle z_m - z_l, z_m \rangle| \to 0$ and $|\langle z_k - z_m, z_m \rangle| \to 0$ in the limit. Accordingly, substituting these into \eqref{xl xm limit} and \eqref{xm xk limit} yields
\begin{align*}
     \lim_{k \to \infty }\|z_l - z_m\|^2 &= 2 \lim_{k \to \infty }\langle z_m - z_l, z_m \rangle =0, \\
     \lim_{k \to \infty }\|z_m - z_k\|^2 &= 2 \lim_{k \to \infty }\langle z_k - z_m, z_m \rangle =0.
\end{align*}
Consequently, by virtue of \eqref{xk triangular}, \eqref{xl_xn}, and \eqref{xn_xm}, we establish that $\{z_k\}_{k \geq 0}$ is a Cauchy sequence. Given the definition of $z_k$, it follows immediately that $\{u_k\}_{k \geq 0}$ inherits this Cauchy property and thus converges to a limit $\hat{u}$ in the underlying space. Furthermore, as the residuals $\mathcal{G}(u_k) - v$ vanish as $k \to \infty$, the continuity of $\mathcal{G}$ ensures that $\hat{u}$ satisfies  $\mathcal{G}(\hat{u}) = v$. This concludes that $\hat{u}=u^\dagger$, confirming that the iterates converge to a solution of $\mathcal{G}(u) = v$.

\noindent For the case $k_* \leq k \leq l$, we have $\lambda_k = 0$, leading to  $\nu_0=0$ in \eqref{eqn:z_m x_l_in_terms_of_residual} and \eqref{eqn:z_k x_m_in_terms_of_residual}. The subsequent steps of the proof remain unchanged~\footnote{It remains to address the case \( k \leq k_* \leq l \), for which the proof of Theorem~\ref{Convergence for exact data} proceeds without alteration.}.
\end{proof}

\subsection{Stability}
To prove the stability of Algorithm~\ref{alg:DDIR} with its exact data counterpart Algorithm~\ref{alg:exactdata}, we require the following result on the admissible family of denoisers 
$\{D_{h}(\cdot)\}_{1> h > 0}$.
\begin{proposition}
\label{lem:Dk-delta-contract}
Let Assumption~\ref{assum:denoiser} holds. For a fixed $k \in \mathbb N$, assume that 
\(\
u_k^\delta \to u_k \text{ as } \delta \to 0.
\)
Then, for any sequence $\{h_k\} \subset (0, 1)$, it follows that
\[
\|u_k^\delta - D_{h_k}(u_k^\delta)\|
\to
\|u_k - D_{h_k}(u_k)\|
\quad \text{as } \delta \to 0.
\] 
\end{proposition}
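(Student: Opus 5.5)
The plan is to show that the map $u \mapsto u - D_{h_k}(u)$ is Lipschitz continuous for each fixed $h_k$. The claim then follows from the reverse triangle inequality. Because $k$ is fixed, $h_k \in (0,1)$ is a fixed number that does not depend on $\delta$. The averaged denoiser $D_{h_k} = h_k D + (1-h_k) I$ therefore does not change as $\delta \to 0$, and only its argument moves.

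First I will use the definition of the averaged denoiser to rewrite the residual. Since $D_{h_k}(u) = h_k D(u) + (1-h_k)u$, we have
\[
u - D_{h_k}(u) = h_k\bigl(u - D(u)\bigr)
\]
for every $u \in U$. Applying this to both $u_k^\delta$ and $u_k$ and subtracting gives
\[
\bigl(u_k^\delta - D_{h_k}(u_k^\delta)\bigr) - \bigl(u_k - D_{h_k}(u_k)\bigr)
= h_k\bigl[(u_k^\delta - u_k) - (D(u_k^\delta) - D(u_k))\bigr].
\]

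Next I will bound the right-hand side using the triangle inequality together with the $q$-contractivity from Assumption~\ref{assum:denoiser}. This yields
\[
\bigl\|\bigl(u_k^\delta - D_{h_k}(u_k^\delta)\bigr) - \bigl(u_k - D_{h_k}(u_k)\bigr)\bigr\| \le h_k(1+q)\,\|u_k^\delta - u_k\|.
\]
An alternative route is to use Proposition~\ref{Prop:2.10}, which gives the Lipschitz constant $1 + L_h$ for $I - D_{h_k}$. That constant is slightly cruder, but either one suffices.

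Finally, I will apply the reverse triangle inequality $\bigl|\|a\| - \|b\|\bigr| \le \|a-b\|$ to obtain
\[
\bigl|\,\|u_k^\delta - D_{h_k}(u_k^\delta)\| - \|u_k - D_{h_k}(u_k)\|\,\bigr| \le h_k(1+q)\|u_k^\delta - u_k\| \to 0 \quad \text{as } \delta \to 0.
\]
As a byproduct, this shows that the vectors $u_k^\delta - D_{h_k}(u_k^\delta)$ themselves converge in norm, not just their norms. That stronger form is presumably what will be needed later for the stability of $\lambda_k^\delta$.

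None of these steps is a real obstacle. The only point that needs care is confirming that $h_k$ does not depend on $\delta$. Algorithm~\ref{alg:DDIR} and Algorithm~\ref{alg:exactdata} must use the same deterministic sequence $\{h_k\}$, and this is implicit in the statement "for any sequence $\{h_k\}$".
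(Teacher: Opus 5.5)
Your proof is correct and follows the paper's argument: the reverse triangle inequality combined with a Lipschitz bound on $I - D_{h_k}$. The paper takes exactly your ``alternative route'' via Proposition~\ref{Prop:2.10}, obtaining the constant $2 - h_k(1-q)$, whereas your factorization $u - D_{h_k}(u) = h_k(u - D(u))$ gives the slightly sharper constant $h_k(1+q)$, a difference that does not affect the conclusion.
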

\begin{proof}
We estimate
\[
\begin{aligned}
\big|
\|u_k^\delta - D_{h_k}(u_k^\delta)\|
-
\|u_k - D_{h_k}(u_k)\|
\big|
 &\le
\|(u_k^\delta - D_{h_k}(u_k^\delta)) - (u_k - D_{h_k}(u_k))\| \\
& =
\|(u_k^\delta - u_k) - (D_{h_k}(u_k^\delta) - D_{h_k}(u_k))\|.
\end{aligned}
\]
By the triangle inequality and Proposition~\ref{Prop:2.10},
\begin{align*}
    \|(u_k^\delta - u_k) - (D_{h_k}(u_k^\delta) - D_{h_k}(u_k))\|
&\le
\|u_k^\delta - u_k\|
+
\|D_{h_k}(u_k^\delta) - D_{h_k}(u_k)\| \\
&\le
(2-h_k (1-q))\|u_k^\delta - u_k\|.
\end{align*}
Since $u_k^\delta \to u_k$ as $\delta \to 0$, the right-hand side converges to
zero, and hence
\(
\|u_k^\delta - D_{h_k}(u_k^\delta)\|
\to
\|u_k - D_{h_k}(u_k)\|,
\)
which completes the proof.
\end{proof}
%%%%%%%%%%%%%%%%%%%%%%%%%%%%%%%%%%%%%%%%%%%%%%%%%%%%%%%%%%%%%%%%%%%%%%%%%%%%%%%%%%%%%%%%%%%%%%%%%%%%%%%%%%%%%%%%%%%%%%%%%%%%%%%%%%%%%%%%%%%%
\begin{lemma}\label{lemma: stability}
Let Assumptions~\ref{assum:existence of a solution} and \ref{assum:denoiser} be satisfied. Let $\tau > 1$, and suppose that the remaining parameters in \eqref{eqn:step size mu} and \eqref{eqn:weighted parameter lambda} are chosen such that \eqref{assum: On C} holds. Let $\{v^{\d_l}\}$ be a sequence of 
noisy data satisfying $\|v^{\d_l} - v\| \le \d_l$ with $0 < \d_l \to 0$ as $l \to \infty$,
and let $u_k^{\d_l}$, $0\le k \le k_{\mathrm{dp}}^{\d_l}$, be defined by Algorithm \ref{alg:DDIR}
using noisy data $v^{\d_l}$, where $k_{\mathrm{dp}}^{\d_l}$ denotes the corresponding stopping index. Let $\{u_k\}_{k \geq 0}$ be 
defined by  Algorithm \ref{alg:exactdata} using the exact data $v$. Then, for any 
finite integer $\hat k \le \liminf_{l \to \infty} k_{\mathrm{dp}}^{\d_l}$ there hold
$$
u_k^{\d_l}\to u_k \quad \text{ as } l \to \infty
$$
for all $0\le k \le \hat k$. 
 % Then for each fixed integer $k \geq 0$, there holds 
 % \begin{equation}
 %    u_k^\delta \to u_k \quad \text{as} \quad \delta \to 0,
 % \end{equation} 
 % where $u_k^\delta$ and $u_k$ are same as in \Cref{alg:DDIR} and \Cref{alg:exactdata}.
 \end{lemma}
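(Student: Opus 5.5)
We will argue by induction on $k$, for $0\le k\le \hat k$. Since $\hat k\le\liminf_{l} k_{\mathrm{dp}}^{\d_l}$ is finite, for all sufficiently large $l$ we have $k_{\mathrm{dp}}^{\d_l}\ge \hat k$. Hence the iterates $u_k^{\d_l}$, $k\le \hat k$, are all defined, and for $k<\hat k$ they are produced by the update step of Algorithm~\ref{alg:DDIR}. The base case is trivial, because $u_0^{\d_l}=u_0$ for every $l$.

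For the induction step, assume $u_k^{\d_l}\to u_k$ for some $k<\hat k$. Writing the update as
\[
u_{k+1}^{\d_l}=u_k^{\d_l}-\mu_k^{\d_l}\,\mathcal G'(u_k^{\d_l})^*\bigl(\mathcal G(u_k^{\d_l})-v^{\d_l}\bigr)-\lambda_k^{\d_l}\bigl(u_k^{\d_l}-D_{h_k}(u_k^{\d_l})\bigr),
\]
it suffices to show that each factor converges to its exact-data counterpart.

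\emph{Convergence of the building blocks.} By (A2) and $\|v^{\d_l}-v\|\le\d_l$, we get $\mathcal G(u_k^{\d_l})-v^{\d_l}\to \mathcal G(u_k)-v$. The continuity of $u\mapsto\mathcal G'(u)$ on $\mathcal B(2\wp,u_0)$ comes from (A3), and Lemma~\ref{lemma:monotonicity} places all iterates in this ball. Together with the uniform bound $B_{\mathcal G}$, this gives $\mathcal G'(u_k^{\d_l})^*(\mathcal G(u_k^{\d_l})-v^{\d_l})\to \mathcal G'(u_k)^*(\mathcal G(u_k)-v)$. By Proposition~\ref{lem:Dk-delta-contract}, or directly by the Lipschitz property of $D_{h_k}$ from Proposition~\ref{Prop:2.10}, we have $u_k^{\d_l}-D_{h_k}(u_k^{\d_l})\to u_k-D_{h_k}(u_k)$. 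The sequence $h_k$ is the same in both algorithms.

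\emph{Convergence of the step sizes.} This is the main obstacle, because $\mu_k$ and $\lambda_k$ are quotients whose denominators may vanish in the limit. We will split into cases.

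For $\mu$: if $g_k:=\mathcal G'(u_k)^*(\mathcal G(u_k)-v)\ne0$, the quotient is continuous at the limit, so $\mu_k^{\d_l}\to\mu_k$. If $g_k=0$, we do not need convergence of $\mu_k^{\d_l}$ at all. The bound $\mu_k^{\d_l}\le\gamma_1$ gives $\|\mu_k^{\d_l}g_k^{\d_l}\|\le\gamma_1\|g_k^{\d_l}\|\to0=\mu_k g_k$. So the product converges in every case, which is all the update requires.

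For $\lambda$: the same device applies. If $u_k\ne D_{h_k}(u_k)$, continuity of the quotient gives $\lambda_k^{\d_l}\to\lambda_k$. If $u_k=D_{h_k}(u_k)$, then $\lambda_k^{\d_l}\le\nu_1$ yields $\|\lambda_k^{\d_l}(u_k^{\d_l}-D_{h_k}(u_k^{\d_l}))\|\le\nu_1\|u_k^{\d_l}-D_{h_k}(u_k^{\d_l})\|\to0$. This matches the exact-data value $\lambda_k=0$, which Algorithm~\ref{alg:exactdata} sets in that case.

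One more subtlety is the \texttt{switch} flag of Algorithm~\ref{alg:exactdata}. Once $u_j\in\operatorname{Fix}(D)$ for some $j\le k$, the exact algorithm uses $\lambda_k=0$ from then on, even if later $u_k\notin\operatorname{Fix}(D)$. In that regime we must show $\lambda_k^{\d_l}(u_k^{\d_l}-D_{h_k}(u_k^{\d_l}))\to0$. We will use $\lambda_k^{\d_l}\|u_k^{\d_l}-D_{h_k}(u_k^{\d_l})\|\le \min\{\nu_0\|\mathcal G(u_k^{\d_l})-v^{\d_l}\|^2/\|u_k^{\d_l}-D_{h_k}(u_k^{\d_l})\|,\ \nu_1\|u_k^{\d_l}-D_{h_k}(u_k^{\d_l})\|\}$.

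This regime is the step we expect to be delicate. The residual need not vanish after the switch, so convergence requires either that the exact iterates stay in $\operatorname{Fix}(D)$ after the switch, or a matching convention in the noisy algorithm. We would first try to use Lemma~\ref{lem:DDIR-stagnation} and the fixed-point structure to show that $u_k-D_{h_k}(u_k)$ is zero in this regime, which would reduce it to the previous case. If that fails, we would note that the lemma is to be understood with the same switching convention applied to Algorithm~\ref{alg:DDIR}.

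Combining the three convergent terms then gives $u_{k+1}^{\d_l}\to u_{k+1}$, which closes the induction up to $\hat k$.
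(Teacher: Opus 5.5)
There is a genuine gap: you flag the post-switch regime $k>k_*$ but never close it. Here $k_*$ is the first index with $u_k\in\operatorname{Fix}(D)$. Your induction and your handling of all $k\le k_*$ are sound. Splitting the $\mu$-term on whether $\mathcal{G}'(u_k)^*(\mathcal{G}(u_k)-v)$ vanishes is even slightly cleaner than the paper, which splits on $\mathcal{G}(u_k)=v$ and needs the TCC to exclude a vanishing gradient.

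For $k>k_*$, Algorithm~\ref{alg:exactdata} sets $\lambda_k=0$. So you need $\lambda_k^{\delta_l}\bigl(u_k^{\delta_l}-D_{h_k}(u_k^{\delta_l})\bigr)\to 0$, and that requires $u_k\in\operatorname{Fix}(D)$. Your fallback, imposing the switch on Algorithm~\ref{alg:DDIR}, is not admissible, because it changes the algorithm the lemma is about. Lemma~\ref{lem:DDIR-stagnation} alone does not help either: part (i) presupposes $\mathcal{G}(u_{k_*})=v$, and part (iii) presupposes $u_{k_*+1}=u_{k_*}$, neither of which you have.

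The missing idea is that Assumption~\ref{assum:denoiser} makes $\operatorname{Fix}(D)$ a singleton. If $u,w\in\operatorname{Fix}(D)$, then $\|u-w\|=\|D(u)-D(w)\|\le q\|u-w\|$ with $q<1$, so $u=w$. The standing assumption places $u^\dagger$ in $\operatorname{Fix}(D)$, so $\operatorname{Fix}(D)=\{u^\dagger\}$. Hence $u_{k_*}=u^\dagger$ and $\mathcal{G}(u_{k_*})=v$. By Lemma~\ref{lem:DDIR-stagnation}(i), $u_k=u^\dagger\in\operatorname{Fix}(D)$ for all $k\ge k_*$. In particular, your worry that the residual need not vanish after the switch is unfounded.

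Your second subcase then covers every $k\ge k_*$. There $\lambda_k^{\delta_l}\le\nu_1$, and $\|u_k^{\delta_l}-D_{h_k}(u_k^{\delta_l})\|\to\|u_k-D_{h_k}(u_k)\|=0$ by Proposition~\ref{lem:Dk-delta-contract}. The gradient term tends to $\mathcal{G}'(u^\dagger)^*(v-v)=0$ because $\mu_k^{\delta_l}\le\gamma_1$.

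With this inserted, your argument is exactly the paper's Case (i)/Case (ii) induction. The paper merely asserts that the exact iterates remain in $\operatorname{Fix}(D)$ after $k_*$; the singleton argument is what justifies that assertion.

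Separately, your claim that continuity of the quotient gives $\lambda_k^{\delta_l}\to\lambda_k$ needs both algorithms to use the same quotient. Definition \eqref{eqn:weighted parameter lambda} has $\|u_k^\delta-D_{h_k}(u_k^\delta)\|^2$ in the denominator. Algorithm~\ref{alg:exactdata}, and the paper's proof of this lemma, use the unsquared norm. You should fix one convention explicitly.
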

\begin{proof}
Given that $\hat k \le \liminf_{l\to \infty} k_{\mathrm{dp}}^{\d_l}$, it follows  that $k^{\d_l}_{\mathrm{dp}} \ge \hat k$ for sufficiently 
large $l$. Consequently, the iterates $u_k^{\d_l}$ are well-defined for all $0\le k \le \hat k$. Let 
\begin{align*}
k_* := \inf\{k: k\in {\mathbb N} \mbox{ and } u_k \in \operatorname{Fix}(D) \}.
\end{align*}
Then $0 \le k_* \le \infty$. 
Based on the definition of $k_*$ and the invariant fixed-point  property $\operatorname{Fix}(D)  = \operatorname{Fix}(D_{h_k})$, it follows that $ \|u_k - D_{h_k}({u_k})\| \ne 0$ for $1\le k < k_*$. Conversely, as dictated by Algorithm~\ref{alg:exactdata}, we have $\|u_k - D_{h_k}(u_k)\| = 0$ for  $k_* \leq k < \infty$. 
\noindent We distinguish between two cases:

\textbf{Case (i):} Let $0 \leq k < \min\{k_*, \hat{k}\}$. In this case, the result is established by induction for all $k$ satisfying $0 \leq k < \min\{k_*, \hat{k}\}$.  For the base case \( k = 0 \), it is trivially true. Furthermore, we suppose that the assertion holds for all \( 0 \leq j \leq k \). Our objective is to demonstrate that it also holds for \( j = k+1 \), i.e., \( u_{k+1}^{\delta_l} \to u_{k+1} \) as \( l \to \infty \). 
    From (\ref{eqn:DDR}), we have 
    \begin{align}\label{x_k+1 to x_k}
    \nonumber u_{k+1}^{\delta_l} - u_{k+1} &= (u_k^{\delta_l} - u_k) -  \left( \mu_k^{\delta_l} \mathcal{G}'(u_k^{\delta_l})^*(\mathcal{G}(u_k^{\delta_l}) - v^{\delta_l}) - \mu_k \mathcal{G}'(u_k)^*(\mathcal{G}(u_k) - v) \right)\\
     & \quad -\left(\lambda_k^{\delta_l} (u_k^{\delta_l} - D_{h_k}(u_{k}^{\delta_l})) - \lambda_k (u_k - D_{h_k}(u_k)) \right).
    \end{align}
We begin by establishing that  
\begin{equation}\label{Eqn: (3.26)}
    \mu_k^{\delta_l} \mathcal{G}'(u_k^{\delta_l})^*(\mathcal{G}(u_k^{\delta_l}) - v^{\delta_l})\to \mu_k \mathcal{G}'(u_k)^*(\mathcal{G}(u_k) - v) \quad \text{as } l \to \infty.
\end{equation}
To this end, we utilize that \( u_k^{\delta_l} \to u_k \) and \( v^{\delta_l} \to v \) as \( l\to \infty\). When $\mathcal{G}(u_k) - v = 0$, using $0 \leq \mu_k^{\delta_l} \leq \gamma_1$, it follows that
\[
   \left\| \mu_k^{\delta_l} \mathcal{G}'(u_k^{\delta_l})^*(\mathcal{G}(u_k^{\delta_l}) - v^{\delta_l}) - \mu_k \mathcal{G}'(u_k)^*(\mathcal{G}(u_k) - v) \right\|
   % = \left\| \mu_k^\delta M^*(Mu_k^\delta - v^\delta)\right\|
   \leq \gamma_1 \left\| \mathcal{G}'(u_k^{\delta_l})^*(\mathcal{G}(u_k^{\delta_l}) - v^{\delta_l})\right\|.
\]
Since $\mathcal{G}(u_k^{\delta_l}) \to \mathcal{G}(u_k)$ by Assumption~\ref{assum:existence of a solution} (A2) and $v^{\delta_l} \to v$ as $l \to \infty$, we obtain
\[
   \gamma_1\left\|  \mathcal{G}'(u_k^\delta)^*(\mathcal{G}(u_k^\delta) - v^\delta)\right\| 
   \to \gamma_1 \left\| \mathcal{G}'(u_k)^*(\mathcal{G}(u_k) - v)\right\| = 0.
\]
On the other hand, when $\mathcal{G}(u_k) - v \neq 0$, we may use TCC~\eqref{eqn:TCC} to have
\begin{align*}
     \langle \mathcal{G}'(u_k)^*(\mathcal{G}(u_k) - v),&\, u_k - u^\dagger \rangle
   = \langle \mathcal{G}(u_k) - v,\, \mathcal{G}'(u_k)(u_k - u^\dagger) \rangle \\
   & = \|\mathcal{G}(u_k) - v\|^2 + \langle\mathcal{G}(u_k) - v, v - \mathcal{G}(u_k) - \mathcal{G}'(u_k)(u^\dagger - u_k)  \rangle \\
   &\geq \|\mathcal{G}(u_k) - v\|^2 - \|\mathcal{G}(u_k) - v\| \|v - \mathcal{G}(u_k) - \mathcal{G}'(u_k)(u^\dagger - u_k) \| \\
   & \geq (1 - \zeta)\|\mathcal{G}(u_k) - v\|^2 > 0,
\end{align*}
which implies that $\mathcal{G}'(u_k)^*(\mathcal{G}(u_k) - v) \neq 0$ and we get 
\[
\mu_k^{\delta_l} = \min \left\{ \frac{\gamma_0 \|\mathcal{G}(u_k^{\delta_l}) - v^{\delta_l}\|^2}{\|\mathcal{G}'(u_k^{\delta_l})^*(\mathcal{G}(u_k^{\delta_l}) - v^{\delta_l}))\|^2}, \gamma_1 \right\}
\to
\min \left\{ \frac{\gamma_0 \|\mathcal{G}(u_k) - v\|^2}{\|\mathcal{G}'(u_k)^*(\mathcal{G}(u_k) - v)\|^2}, \gamma_1 \right\}
\]
as $l \to \infty$. Consequently, we obtain
\begin{align*}
&\left\| \mu_k^{\delta_l} \mathcal{G}'(u_k^{\delta_l})^*(\mathcal{G}(u_k^{\delta_l}) - v^{\delta_l}) - \mu_k \mathcal{G}'(u_k)^*(\mathcal{G}(u_k) - v) \right\|  \\
& \hspace{30mm} \leq \mu_k^{\delta_l} \left\| \mathcal{G}'(u_k^{\delta_l})^*(\mathcal{G}(u_k^{\delta_l}) - v^{\delta_l}) -  \mathcal{G}'(u_k)^*(\mathcal{G}(u_k) - v) \right\| \nonumber \\
& \hspace{30mm} + \left| \mu_k^{\delta_l} - \mu_k \right| \left\| \mathcal{G}'(u_k)^*(\mathcal{G}(u_k) - v)\right\| \to 0 \text{ as } l \to \infty. \nonumber
\end{align*}
Thus, \eqref{Eqn: (3.26)} is established. By an analogous argument, we also obtain that
\begin{equation}\label{eqn: (3.24)}
    \lambda_k^{\delta_l} (u_k^{\delta_l} - D_{h_k}(u_k^{\delta_l}))
\to
\lambda_k (u_k - D_{h_k}(u_k)).
\end{equation}
Recall that \( \|u_k - D_{h_k}(u_k) \| \neq 0 \) for $1\le k < k_*$, we thus have \( \|u_k^{\delta_l} - D_{h_k}(u_k^{\delta_l}) \| \neq 0 \) for large $l$. Then, by the definitions of \( \lambda_k^{\delta_l} \) and \( \lambda_k \), we obtain
\[
\lambda_k^{\delta_l} = 
\min \left\{ \frac{\nu_0 \|\mathcal{G}(u_k^{\delta_l}) - v^{\delta_l}\|^2}{\|u_k^{\delta_l} - D_{h_k}(u_k^{\delta_l})\|},  \nu_1 \right\} \to 
\min \left\{ \frac{\nu_0 \|\mathcal{G}(u_k) - v\|^2}{\|u_k - D_{h_k}( u_k)\|}, \nu_1 \right\} = \lambda_k
\]
as  $l \to \infty.$ This convergence follows from the induction hypothesis and Proposition~\ref{lem:Dk-delta-contract}, which ensures that\( \|u_k^{\delta_l} - D_{h_k}(u_k^{\delta_l})\| \to \|u_k - D_{h_k}( u_k)\| \) as \( l \to \infty \). Now,
we decompose the difference as
\[
\begin{aligned}
&\lambda_k^{\delta_l} \big(u_k^{\delta_l} - D_{h_k}(u_k^{\delta_l})\big)
-
\lambda_k \big(u_k - D_{h_k}(u_k)\big)
\\
&\quad =
(\lambda_k^{\delta_l} - \lambda_k)\big(u_k^{\delta_l} - D_{h_k}(u_k^{\delta_l})\big)
+
\lambda_k\Big[
(u_k^{\delta_l} - u_k) - \big(D_{h_k}(u_k^{\delta_l}) - D_{h_k}(u_k)\big)
\Big].
\end{aligned}
\]
Taking norms and using the triangle inequality, we obtain
\[
\begin{aligned}
\big\|
\lambda_k^{\delta_l} (u_k^{\delta_l} - D_{h_k}(u_k^{\delta_l}))
-
\lambda_k (u_k - D_{h_k}(u_k))
\big\|
& \le
|\lambda_k^{\delta_l} - \lambda_k|
\,
\|u_k^{\delta_l} - D_{h_k}(u_k^{\delta_l})\|
\\
& \hspace{-10mm} +|\lambda_k|\Big(\|u_k^{\delta_l} - u_k\|+\|D_k(u_k^{\delta_l}) -D_{h_k}(u_k)\|\Big).
\end{aligned}
\]
Since \(u_k^{\delta_l} \to u_k\) and \(D_{h_k}(u_k^{\delta_l}) \to D_{h_k}(u_k)\),
the sequence \(\|u_k^{\delta_l} - D_{h_k}(u_k^{\delta_l})\|\) is bounded.
Together with \(\lambda_k^{\delta_l} \to \lambda_k\), this implies that the first term
converges to zero.
The second term converges to zero by the assumed convergences of
\(u_k^{\delta_l}\) and \(D_{h_k}(u_k^{\delta_l})\).
Therefore, the result stated in \eqref{eqn: (3.24)} follows.
% \begin{equation}\label{eqn: (3.24)}
%     \lambda_k^{\delta_l} (u_k^{\delta_l} - D_{h_k}(u_k^{\delta_l}))
% \to
% \lambda_k (u_k - D_{h_k}(u_k)).
% \end{equation}
% where the  boundedness of $\|\Delta_{u_k^\delta}\|$ follows directly from \cref{eqn:last_eqn} by setting $u_k = 0$, which yields $\| \Delta_{u_k^\delta}\| \leq \| \Delta_0\| + \mathcal{H}  \|u_k^\delta\|.$
% Hence, applying Lemma~\ref{lemma: contraction of Delta} to the above expression together with the induction hypothesis, we obtain
% \[
% \beta_k^\delta \Delta_{u_k^\delta} u_k^\delta \to \beta_k \Delta_{u_k} u_k \quad \text{as } \delta \to 0.
% \]
Consequently, it follows from \eqref{x_k+1 to x_k}, \eqref{Eqn: (3.26)}, \eqref{eqn: (3.24)} and the induction hypothesis that
\(
u_{k+1}^\delta \to u_{k+1} \text{ as } \delta \to 0.
\) This completes the proof  for 
$0\le k < \min\{k_*, \hat k\}$.

\textbf{Case (ii):} Let $k_* \le k \le \hat k$.
We again use an induction argument to show that 
\(
 u_k^{\d_l} \to u_k  \mbox{ as } l \to \infty 
\)
for $k_* \le k \le \hat k$. Recall $\|u_{k_*} - D_{h_{k_*}}(u_{k_*})\| =0$, which implies that $\|u_{k+1} - D_{h_{k+1}}(u_{k+1})\| = 0$ for all $k \ge k_*-1$. Hence, for $k > k_*$, the iteration reduces to a gradient descent step. We first establish the induction hypothesis for the index \(k = k_*\), that is, \(u_{k_*}^{\delta_l} \to u_{k_*}\) as \(l \to \infty\). This follows directly from Case (i).
% Note that  
% $$
% u_{k_*} = u_{k_*-1} + \mu_{k_*-1} \mathcal{G}'(u_{k_* -1})^*(\mathcal{G}(u_{k_* -1}) - v).
% $$
% Therefore, by the definition of $u_{k_*}^{\d_l}$, $0\le \lambda_{k_*-1}^{\d_l} \le \nu_1$ 
% we have 
% \begin{align*}
% \|u_{k_*}^{\d_l} - u_{k_*}\| 
% & \le \|u_{k_*-1}^{\d_l} - u_{k_*-1}\| + \nu_1 \| u_{k_*-1}^{\d_l} - D_{h_{k_*-1}}(u_{k_*-1}^{\d_l})\| \\
% & \quad \, + \|\mu_{k_*-1}^{\d_l} \mathcal{G}'(u_k^{\d_l})^*(\mathcal{G}(u_{k_*}^{\d_l}) - v^{\d_l}) - \mu_{k_*-1} \mathcal{G}'(u_{k_*})^*(\mathcal{G}(u_{k_*}) - v)\| \\
% & \to \nu_1 \|u_{k_*-1}  - D_{h_{k_* -1}}(u_{k_*-1})\| = 0
% \end{align*}
% and consequently, we have $u_{k_*}^{\d_l} \to u_{k_*}$ as 
% $l \to \infty$.
Now assume that the assertion holds for $k_*\le k \le n$ for some $k_*\le n < \hat k$. 
% By using $r_k =0$ and $m_k=0$ we have $\xi_{k+1} = \xi_k$. 
Thus, by the induction hypothesis and results established in case (i), we have 
\begin{align*}
\|u_{n+1}^{\delta_l} - u_{n+1}\|
&\leq \|u_n^{\delta_l} - u_n\| + \|
\mu_n^{\delta_l} \mathcal{G}'(u_n^{\delta_l})^*
(\mathcal{G}(u_n^{\delta_l}) - v^{\delta_l})
- \mu_n \mathcal{G}'(u_n)^*
(\mathcal{G}(u_n) - v)
\| \\
&\quad + \lambda_n^{\delta_l}
\|u_n^{\delta_l} - D_{h_n}(u_n^{\delta_l})\| \to \nu_1 \|u_n - D_{h_n}(u_n)\|=0.
\end{align*}
as $l \to \infty$. This establishes that the assertion also holds for $k = n+1$.  
Combining the results obtained in Case (i) and Case (ii), the proof is complete.
\end{proof}
\subsection{Convergence for noisy data}
We are now in a position to establish the main strong convergence result for Algorithm~\ref{alg:DDIR} applied to \eqref{eq:forward_model} with noisy data.
\begin{theorem}\label{AHB.thm4}
Let Assumptions~\ref{assum:existence of a solution} and \ref{assum:denoiser} hold and consider Algorithm \ref{alg:DDIR}, 
where  $\tau>1$ and the other parameters of \eqref{eqn:step size mu} and \eqref{eqn:weighted parameter lambda} are chosen such that \eqref{assum: On C} holds. 
Let $k_\mathrm{dp}$ be the output integer. Then there exists a solution $u^\dagger$ of \eqref{eq:forward_model} in 
$B(\wp, u_0)$ such that 
$$
\lim_{\d \to 0}\|u^\delta_{k_\mathrm{dp}} - u^\dagger\|=0.
$$
% If, in addition, $\emph{Ker}((x^\dag))\subset\emph{Ker}(L(x))$ for all $x\in B_{2\rho}(x_0)$, 
% then $x^* = x^\dag$.
\end{theorem}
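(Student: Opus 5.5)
We follow the classical Hanke--Neubauer--Scherzer argument for Landweber-type methods stopped by the discrepancy principle. Let $u^\dagger$ be the limit of the exact-data iterates $\{u_k\}$ from Algorithm~\ref{alg:exactdata}, which exists and solves $\mathcal{G}(u)=v$ by Theorem~\ref{Convergence for exact data}. By Lemma~\ref{lemma: monotonicity for exact data}, $\|u_k-u_0\|$ stays controlled, so $u^\dagger \in \mathcal{B}(\wp,u_0)$; if this needs care, we pass to the limit in the monotonicity estimate with respect to a fixed solution in $\mathcal{B}(\wp,u_0)$.

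Take an arbitrary sequence $\delta_l\to 0$ with data $v^{\delta_l}$ and stopping indices $k_l := k_{\mathrm{dp}}^{\delta_l}$. It suffices to show that every subsequence has a further subsequence along which $u^{\delta_l}_{k_l}\to u^\dagger$. We split into two cases.

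\textbf{Bounded case.} Suppose $\{k_l\}$ has a bounded subsequence. Passing to a further subsequence, we may assume $k_l = \hat k$ is constant. Then $\hat k \le \liminf k_l$, so Lemma~\ref{lemma: stability} gives $u^{\delta_l}_{\hat k}\to u_{\hat k}$. The discrepancy principle gives $\|\mathcal{G}(u^{\delta_l}_{\hat k})-v^{\delta_l}\|\le\tau\delta_l$, and by continuity (A2) this yields $\mathcal{G}(u_{\hat k})=v$. Lemma~\ref{lem:DDIR-stagnation}(i) then shows $u_m=u_{\hat k}$ for all $m>\hat k$, so $u_{\hat k}=\lim_m u_m=u^\dagger$. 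Hence $u^{\delta_l}_{k_l}\to u^\dagger$.

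\textbf{Unbounded case.} Suppose $k_l\to\infty$. Fix $\varepsilon>0$ and pick $k$ with $\|u_k-u^\dagger\|<\varepsilon$. For large $l$ we have $k_l>k$, and the Monotonicity Lemma~\ref{lemma:monotonicity}(i), applied with $k^\dagger=k_l$ and the solution $u^\dagger$, gives
\[
\|u^{\delta_l}_{k_l}-u^\dagger\|\le\|u^{\delta_l}_k-u^\dagger\|\le \|u^{\delta_l}_k-u_k\|+\|u_k-u^\dagger\|.
\]
By Lemma~\ref{lemma: stability}, which applies since $k\le\liminf k_l=\infty$, the first term tends to $0$. Hence $\limsup_l\|u^{\delta_l}_{k_l}-u^\dagger\|\le\varepsilon$. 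Since $\varepsilon$ was arbitrary, the claim follows.

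\textbf{Main obstacle.} The delicate point is that Lemma~\ref{lemma:monotonicity} requires $u^\dagger$ to be a solution lying in the ball where the TCC holds, and also that $u^\dagger\in\operatorname{Fix}(D)$; the paper's standing assumption that all solutions lie in $\operatorname{Fix}(D)$ covers the latter. So the key is to identify the limit of the exact iteration as a solution within $\mathcal{B}(\wp,u_0)$, or at least within $\mathcal{B}(2\wp,u_0)$ with the monotonicity argument still valid. I would first try to show $\|u_k-\bar u\|\le\|u_0-\bar u\|\le\wp$ for a given solution $\bar u\in\mathcal{B}(\wp,u_0)$. This places the limit in $\mathcal{B}(2\wp,u_0)$, which is enough for the TCC-based estimates in Lemma~\ref{lemma:monotonicity}. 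The statement's "$B(\wp,u_0)$" would then be read accordingly, or made to hold via $u^\dagger=\bar u$ when the solution in $\operatorname{Fix}(D)$ is unique.
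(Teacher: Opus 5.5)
Your proposal is correct and follows the paper's proof essentially step for step. The bounded-index case uses Lemma~\ref{lemma: stability}, the discrepancy principle, continuity of $\mathcal{G}$ and Lemma~\ref{lem:DDIR-stagnation}(i); the case $k_l\to\infty$ uses Lemma~\ref{lemma:monotonicity} with respect to the exact-data limit together with Lemma~\ref{lemma: stability}; and your subsequence framing makes the case split exhaustive, which the paper leaves implicit.

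One inference in your first paragraph is wrong: Lemma~\ref{lemma: monotonicity for exact data} only gives $\|u_k-\bar u\|\le\wp$ for the (A1) solution $\bar u$, so it places the limit in $\mathcal{B}(2\wp,u_0)$, not $\mathcal{B}(\wp,u_0)$. The paper simply identifies the limit with $u^\dagger$ via Theorem~\ref{Convergence for exact data}. As you note at the end, $\mathcal{B}(2\wp,u_0)$ still suffices for the monotonicity step, because the noisy iterates stay in $\mathcal{B}(2\wp,u_0)$ by monotonicity with respect to $\bar u$, and the limit lies in $\operatorname{Fix}(D)$ by the standing assumption.
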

		
\begin{proof}
Let $u^\dagger$ denote the solution of \eqref{eq:forward_model} identified in Theorem~\ref{Convergence for exact data}, for which $\|u_k - u^\dagger\| \to 0$ as $k \to \infty$, where $\{u_k\}$ is the sequence generated by the counterpart of Algorithm~\ref{alg:DDIR} with exact data. We aim to show that $\|u_{k_{\mathrm{dp}}}^\delta - u^\dagger\| \to 0$ as $\delta \to 0$. Now assume that there is a sequence $\{v^{\d_l}\}$ of noisy data satisfying $\|v^{\d_l}-v\|\le \d_l$ 
with $\d_l \to 0$. Let $k_l:=k^{\d_l}_{\mathrm{dp}}$ be the corresponding stopping index.

We prove the claim by distinguishing two cases as $l \to \infty$. 
\begin{enumerate}[label=(\roman*)]
    \item $k_l \to \hat k, \quad \hat k \in \mathbb{N}_0$;
    \item $k_l \to +\infty.$
\end{enumerate}
			
\noindent\textbf{Case (i):} In this case we can assume that  $k_l = \hat{k}$ for all large $l$. According to the definition of $k_l:=k^{\d_l}_{\mathrm{dp}}$ 
we have
$$
\|\mathcal{G}(u_{\hat{k}}^{\d_l}) - v^{\d_l}\| \le \tau \d_l.
$$
By taking $l\to\infty$ and using Lemma~\ref{lemma: stability} along with Assumption~\ref{assum:existence of a solution}, we can obtain $\mathcal{G}(u_{\hat{k}}) = v$. Thus, 
we may use Lemma \ref{lem:DDIR-stagnation} (i) to obtain $u_k = u_{\hat k}$ for all $k \ge \hat k$. Since 
$u_k \to u^\dagger$ as $k \to \infty$, we must have $u_{\hat k} = u^\dagger$ and thus, 
$$
\lim_{\d \to 0}\|u^\delta_{k_\mathrm{dp}} - u^\dagger\|=0.
$$	
\noindent\textbf{Case (ii):}
Given $\varepsilon > 0$, it follows from Theorem~\ref{Convergence for exact data} that there exists $k^* = k^*(\varepsilon)$ such that
\begin{equation}\label{eqn: estimate}
\|u_k - u^\dagger\| < \frac{\varepsilon}{2}, \quad \forall\, k > k^*.
\end{equation}
Let us fix a specific integer $k > k^*$. Since $k_l \to \infty$, there exists $l_1$ such that $k_l > k$ for all $l > l_1$. For such $l$, Lemma~\ref{lemma:monotonicity} implies that the error is monotonically non-increasing up to index $k_l$, allowing us to write
\begin{equation}\label{eqn: estimate_split}
\|u_{k_l}^{\delta_l} - u^\dagger\| \leq \|u_{k}^{\delta_l} - u^\dagger\|
\leq \|u_{k}^{\delta_l} - u_k\| + \|u_k - u^\dagger\|.
\end{equation}
Furthermore, for this fixed $k$, Lemma~\ref{lemma: stability} guarantees that $u_k^{\delta_l} \to u_k$ as $l \to \infty$. Thus, there exists $l_2$ such that for all $l > l_2$,
\begin{equation}\label{eqn:stability_estimate}
\|u_k^{\delta_l} - u_k\| < \frac{\varepsilon}{2}.
\end{equation}
Choosing $l^* = \max\{l_1, l_2\}$, we combine \eqref{eqn: estimate}, \eqref{eqn: estimate_split}, and \eqref{eqn:stability_estimate} to conclude that for all $l > l^*$,
\[
\|u_{k_l}^{\delta_l} - u^\dagger\| < \frac{\varepsilon}{2} + \frac{\varepsilon}{2} = \varepsilon.
\]
This implies that $u_{k_l}^{\delta_l} \to u^\dagger$ as $l \to \infty$, completing the proof.
\end{proof}

%%%%%%%%%%%%%%%%%%%%%%%%%%%%%%%%%%%%%%%%%%%%%%%%%%%%%%%%
\section{Numerical experiments and discussion}\label{sec:numerical}
In this section, we assess the performance of the proposed \texttt{DDIR} framework as in Algorithm~\ref{alg:DDIR}. The primary objective of this work is to establish a theoretical guarantee of convergence and to prove that \texttt{DDIR} method is a convergent regularization, we will do so by analyzing our method on image deblurring and phase retrieval CT problems. 
 In addition to that, we compare our method with baseline FBP  and PnP~\cite{ebner2024plug} methods for the task of image deblurring. 
% \textcolor{blue}{Moreover, we also provide the comparison with \texttt{IRMGL+$\Psi$} method for the task of phase retrieval CT~\cite{fannjiang2020numerics}.}
In this study, we consider three denoisers within the \texttt{DDIR} framework. First, the median filter is employed as a simple yet effective regularizer, illustrating the flexibility of \texttt{DDIR} in accommodating basic denoising operators. Secound, we incorporate the TNRD method, representing a high-performance, state-of-the-art  denoiser. Third, we use a proximal-based TV denoiser, which is theoretically well-motivated and promotes piecewise smooth reconstructions.

\subsection{Denoisers}
Let $U=\mathbb{R}^{n}$ denote the discrete image space endowed with the Euclidean inner product. An image $u\in U$ is identified with a vectorized grayscale image of size $n=N_xN_y$.
In this section we define the following realizations of the denoiser $D:U\to U$ used in Algorithm~\ref{alg:DDIR}.

%%%%%%%%%%%%%%%%%%%%%%%%%%%%%%%%%%%%%%%%%%%%%%%%%%%%%%%%%%%%
\textbf{Median filter.}
Let $\Omega\subset\mathbb{Z}^2$ denote the pixel grid.
For each pixel $x\in\Omega$ of $u$ let $W(x)\subset\Omega$ be a fixed square window of size $w\times w$ centered at $x$.
The median denoiser $D_{\mathrm{med}}:U\to U$ is defined componentwise by
\(
\big(D_{\mathrm{med}}(u)\big)(x)
=
\operatorname{median}\{u(y): y\in W(x)\}.
% \label{eq:median_definition}
\)
For each pixel $x$, the median value admits the variational characterization
\begin{equation*}
\big(D_{\mathrm{med}}(u)\big)(x)
=
\arg\min_{m\in\mathbb{R}}
\sum_{y\in W(x)} |u(y)-m|,
\label{eq:median_variational}
\end{equation*}
i.e., it minimizes a local $\ell_1$ fidelity functional.
Hence the median filter is particularly robust to impulsive noise.
A detailed discussion of order-statistics filters and their robustness properties can be found in~\cite{huang1979fast}.
In our experiments we employ $D_{\mathrm{med}}(u)$ with $w=3$.
% The averaged median denoiser used in Algorithm~\ref{alg:DDIR} is given by
% \begin{equation}
% D_h^{\mathrm{med}}(u)
% =
% (1-h)u + h D_{\mathrm{med}}(u).
% \label{eq:Dh_median}
% \end{equation}

%%%%%%%%%%%%%%%%%%%%%%%%%%%%%%%%%%%%%%%%%%%%%%%%%%%%%%%%%%%%
\textbf{TNRD.}
As second denoiser we employ the Trainable Nonlinear Reaction Diffusion (TNRD) model introduced in~\cite{chen2016trainable}.
Let $K_i:\mathbb{R}^{n}\to\mathbb{R}^{n}$, $i=1,\dots,N_t$, denote convolution operators with kernels $k_i\in\mathbb{R}^{r\times r}$ and
let $\phi_i:\mathbb{R}\to\mathbb{R}$ be nonlinear influence functions. In our experiments we use $N_t = 24$ filters with a kernel size of $r = 5$. To ensure a robust baseline for the denoising component, we utilize pretrained Gaussian denoising parameters from \cite{chen2016trainable}. Now
define the regularization functional as
\begin{equation*}
R(u)
=
\sum_{i=1}^{N_t}\sum_{x\in\Omega}
\rho_i\big((K_i\circ u)(x)\big),
\qquad
\phi_i = \rho_i'.
\label{eq:TNRD_energy}
\end{equation*}
A single diffusion step of the TNRD model reads
\(
D_{\mathrm{TNRD}}(u)
=
u
-
\kappa
\sum_{i=1}^{N_t}
K_i^\top \phi_i(K_i \circ u),
\label{eq:TNRD_single_step}
\)
where $\kappa>0$ denotes the diffusion step size and $K_i^\top$ is the adjoint convolution operator.
Thus
\(
D_{\mathrm{TNRD}}(u)
=
u - \kappa \nabla R(u),
\)
i.e., the TNRD denoiser corresponds to one gradient descent step applied to the learned regularizer $R$.
% In the multi-stage setting with $T$ stages,
% \begin{equation}
% u^{t+1}
% =
% u^t
% -
% \tau_t
% \sum_{i=1}^{N_f}
% K_i^{t\top}
% \phi_i^t(K_i^t u^t),
% \qquad t=0,\dots,T-1,
% \end{equation}
% and we define
% \begin{equation}
% D_{\mathrm{TNRD}}(u) := u^{T}.
% \end{equation} 
% The averaged operator used in Algorithm~\ref{alg:DDIR} is
% \begin{equation}
% D_h^{\mathrm{TNRD}}(u)
% =
% (1-h)u + h D_{\mathrm{TNRD}}(u).
% \label{eq:Dh_TNRD}
% \end{equation}
% and hence
% \begin{equation}
% u - D_h^{\mathrm{TNRD}}(u)
% =
% h\tau
% \sum_{i=1}^{N_f}
% K_i^\top \phi_i(K_i u).
% \end{equation}

% Therefore, within Algorithm~\ref{alg:DDIR}, the regularization term
% \[
% \lambda_k^\delta \big(u_k^\delta - D_h(u_k^\delta)\big)
% \]
% corresponds to a relaxed local $\ell^1$-type correction in the median case, and to a learned gradient regularization term in the TNRD case.

\textbf{TV proximal.} As discussed in Section~\ref{sec:assumptions}, we also adopt the scaled proximity operator $\frac{1}{1+\omega}\operatorname{prox}_{\omega g}$, where $\operatorname{prox}_{\omega g}$ is defined in Definition~\ref{eq:prox_def}, as a denoiser associated with the two-dimensional total variation (TV) functional, i.e.,
\begin{equation*}
g(y)=\mathrm{TV}(y) = \sum_{i,j} \sqrt{|y_{i+1, j} - y_{i, j}|^2 + |y_{i, j+1} - y_{i,j}|^2}.
\end{equation*}
In our experiments, we use $\omega = 0.2$ and the corresponding 2D TV denoising step is implemented using the algorithm proposed in \cite{chambolle2004algorithm}. Furthermore, according to \cite[Remark 18]{ebner2024plug}, the scaled operator $ D_{\mathrm{TV}}(u) = \frac{1}{1+\omega}\operatorname{prox}_{\omega \mathrm{TV}}$ satisfies Assumption~\ref{assum:denoiser}.

The value of $q$ as in Assumption~\ref{assum:denoiser} for each denoiser is estimated empirically by sampling 100 random image pairs $u,v$ and computing
\(
q_i = \frac{\|D(u_i) - D(v_i)\|}{\|u_i - v_i\|}.
\)
The maximum value across all samples provides an estimate of the Lipschitz constant.
% \begin{figure}[htbp]
%     \centering
%     \includegraphics[width=0.6\textwidth]{lipschitz_boxplot.png}
%     \caption{Reconstructed image with $\delta = 0.005$.}
%     \label{fig:q_values}
% \end{figure}
\begin{table}[htbp]
\centering
\footnotesize
\begin{tabular}{lcc}
\hline
\textbf{Denoiser} & \textbf{Estimated value of $q$} & $c_q=\frac{1-q}{(1+q)^2}$ \\
\hline
Median & 0.5295 & 0.2011 \\
TNRD & 1.4058 & -0.0701 \\
TV proximal & 0.8333 & 0.0496 \\
\hline
\end{tabular}
\caption{Empirical estimates of $q$  obtained using random image pairs.}
\label{tab:lipschitz_constants}
\end{table}
The results presented in Table~\ref{tab:lipschitz_constants} shows that the TV proximal denoiser satisfies the theoretical contractivity assumption with
\(
q = \frac{1}{1+\omega},
\)
while the median filter behaves contractively in practice. The TNRD implementation, however, appears to be expansive.
\begin{remark}
 Numerical evaluations involving the non-contractive TNRD denoiser are included to highlight the empirical stability of our framework. These experiments confirm that, in practice, the algorithm often converges even when the theoretical assumptions placed on the denoising operator are not fully satisfied.
\end{remark}

%%%%%%%%%%%%%%%%%%%%%%%%%%%%%%%%%%%%%%%%%%%%%%%%%%%%%%%%%%%%%%%%%%%%%%%%%%%%
\subsection{Experimental setup}\label{experimental setup}
In all simulations, the noisy data $v^\delta$ is generated by $v^\delta = \mathcal{G}(u) + \delta_{\mathrm{rel}}\|\mathcal{G}(u)\|z,$ where $u$ represents the ground truth, $\delta_{\mathrm{rel}}$ is the relative noise level, and $z$ is the Gaussian noise satisfying $\|z\|=1$ so that the noise level $\delta = \delta_{\mathrm{rel}}\|\mathcal{G}(u)\|$. For the numerical validation of the theoretical results, the parameters of Algorithm~\ref{alg:DDIR} are selected to ensure that the positivity condition of $\mathcal{C}$ (cf.~\eqref{assum: On C}) is satisfied. Accordingly, we set $\nu_0 = 0.1$, $\nu_1 = 0.3$, $\gamma_0 = 0.1$, $\gamma = 0.3$, $\gamma_1 = 0.4$, and $\tau = 2$, with the sequence $h_k = \frac{1}{k+1}$ for linear inverse problems. The maximum number of iterations is set to 1000. In this case, since $\zeta = 0$ (for linear case), all feasibility conditions required to ensure the positivity of $\mathcal{C}$ are satisfied, as indicated in Remark~\ref{eqn:C_for_linear}, for both median and TV proximal denoisers, using the corresponding values of $c_q$ listed in Table~\ref{tab:lipschitz_constants}. The qualitative evaluation of reconstructed images are performed using PSNR [dB], SSIM~\cite{wang2004image} and Relative Error (RE), where
$$\mathrm{RE}(u) = \frac{\|u^\delta_{k_{\mathrm{dp}}} - u^\dagger\|}{\|u^\dagger\|}, \qquad \mathrm{PSNR}(u) = 20\log_{10}\left( \frac{1}{\|u^\dagger - u^\delta_{k_{\mathrm{dp}}}\|} \right),$$
with $u^\delta_{k_{\mathrm{dp}}}$ denoting the reconstructed image.
%%%%%%%%%%%%%%%%%%%%%%%%%%%%%%%%%%%%%%%%%%%%%%%%%%%%%%%%%%%%%%%%%
\subsection{Image deblurring}
In accordance with the image deblurring framework established in~\cite{vogel2002computational}, we consider a forward operator  
$\mathcal{G}: L^2(\Delta) \to L^2(\Delta)$, with $\Delta \subset \mathbb{R}^2$, which is modeled as a spatial convolution with a Gaussian point spread function. Specifically, for $u \in L^2(\Delta)$, the operator $\mathcal{G}$ acts as  
\[
(\mathcal{G}u)(w_1, w_2) = \int_{\Delta} \mathcal{A}_\varphi(w_1 - w_1',\, w_2 - w_2') \, u(w_1', w_2') \, dw_1' \, dw_2',
\]
where $\mathcal{A}_\varphi$ denotes the Gaussian kernel given by  
\(
\mathcal{A}_\varphi(w_1, w_2) = \frac{1}{2\pi \varphi^2} \exp\left(-\frac{w_1^2 + w_2^2}{2\varphi^2}\right),
\)
and the parameter $\varphi > 0$ controls the strength of blurring. In the experimental setup, the domain $\Delta$ is partitioned into a $256 \times 256$ uniform pixel grid. The continuous forward operator $\mathcal{A}_\varphi$ is then approximated by a discrete convolution operator, denoted by  
\(
A_\varphi: \mathbb{R}^{256 \times 256} \to \mathbb{R}^{256 \times 256}.
\)
Accordingly, the action of the operator $G$ on a discrete image $u \in \mathbb{R}^{256 \times 256}$ is defined as
\[
(Gu)_{i,j} = \sum_{k,\ell} A_\varphi(i - k, j - \ell) \cdot u_{k,\ell}, \quad i, j, k, \ell \in \{0, \ldots, 255\}.
\]
For the numerical implementation, the convolution operation is carried out using the \texttt{scipy.ndimage.gaussian\_filter} routine available in Python. Let $u^\dagger \in \mathbb{R}^{256 \times 256}$ represent the true underlying image. The corresponding noisy observation $v^\delta \in \mathbb{R}^{256 \times 256}$ is then formulated as 
\[
v^\delta = G u^\dagger + \eta, \qquad \|\eta\| \leq \delta,
\]
where \( \eta \in \mathbb{R}^{256 \times 256} \) denotes the additive measurement noise, with $\delta > 0$ representing the prescribed noise level.
To recover $u^\dagger$ from the noisy observation $v^\delta$, we employ \Cref{alg:DDIR}. In this experiment, we take $\varphi = 1.5$ and all remaining experimental parameters are chosen in accordance with those described in~\Cref{experimental setup}. 

\subsubsection{Reconstruction and quantitative results}
The effectiveness of the proposed method~\eqref{eqn:DDR} with $\delta_{\mathrm{rel}}=0.005$ is illustrated in Fig.~\ref{fig:reconstruction_results} for four different images from \texttt{skimage} library and the reconstruction results with $\delta_{\mathrm{rel}}=0.001, 0.0005$ are moved Appendix~\ref{app:extra_results}. Additional quantitative results are presented in Table~\ref{tab:shepp_full}, Table~\ref{tab:coins_full}, Table~\ref{tab:moon_full}, and Table~\ref{tab:text_full}, which summarize the reconstruction performance across various noise levels using the initial estimate $u_0^\delta = v^\delta$. These tables report the stopping index $k_{\mathrm{dp}}$, RE, PSNR, and SSIM, thereby providing a comprehensive assessment of the reconstruction quality. The relative error plots for the image \texttt{shepp\_logan} are shown in Fig.~\ref{fig:rel_error_plots} across different noise levels. A key advantage of the proposed approach over RED~\cite{romano2017little}, RED-PRO~\cite{cohen2021regularization} and PnP~\cite{ebner2024plug} lies in the fact that the stopping index is not fixed \emph{a priori}, but is instead determined in an \emph{a posteriori} manner.
\begin{figure}[htbp]
\centering
\includegraphics[width=0.45\textwidth]{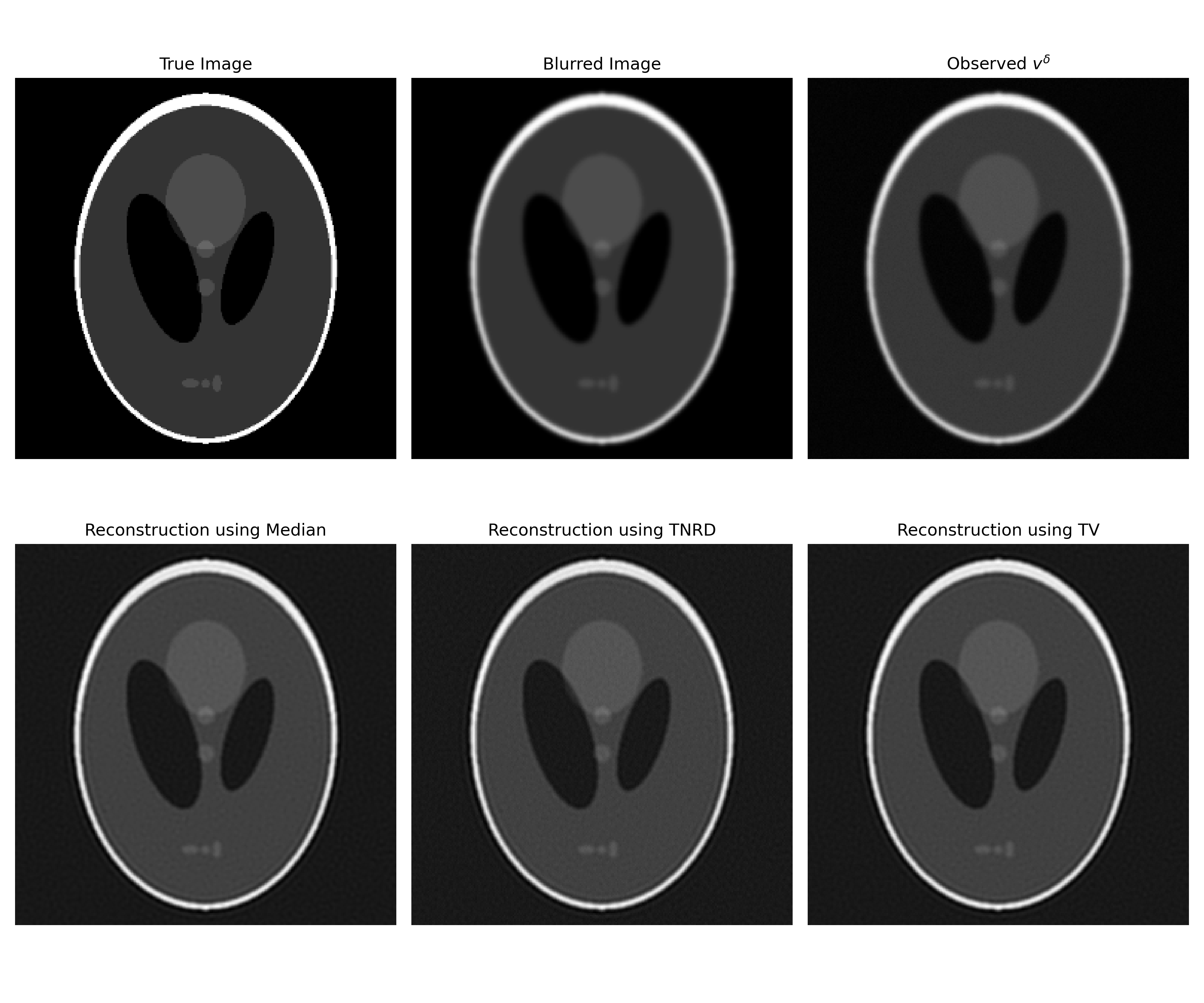}
\hfill
\includegraphics[width=0.45\textwidth]{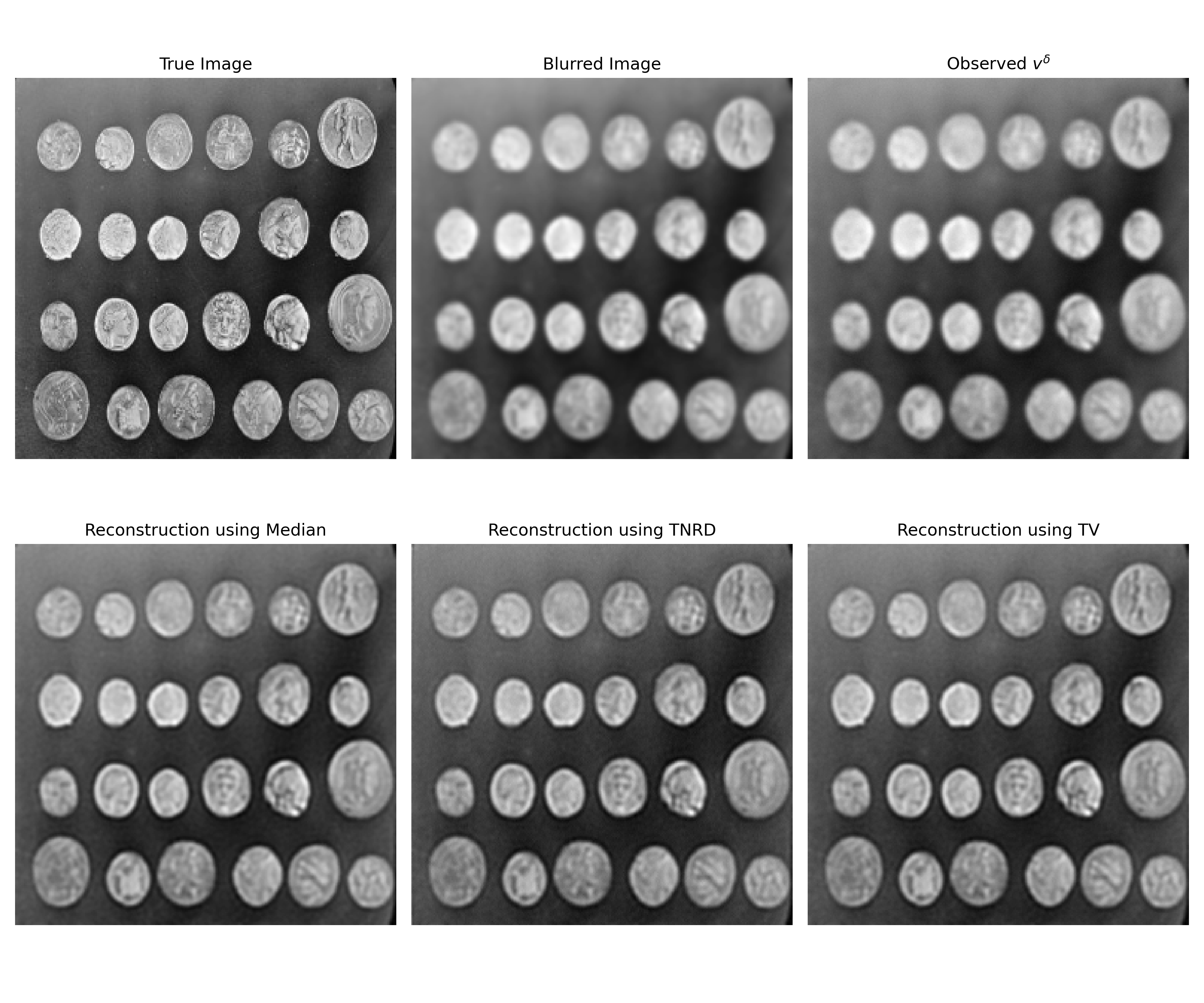}

\vspace{0.5cm}

\includegraphics[width=0.45\textwidth]{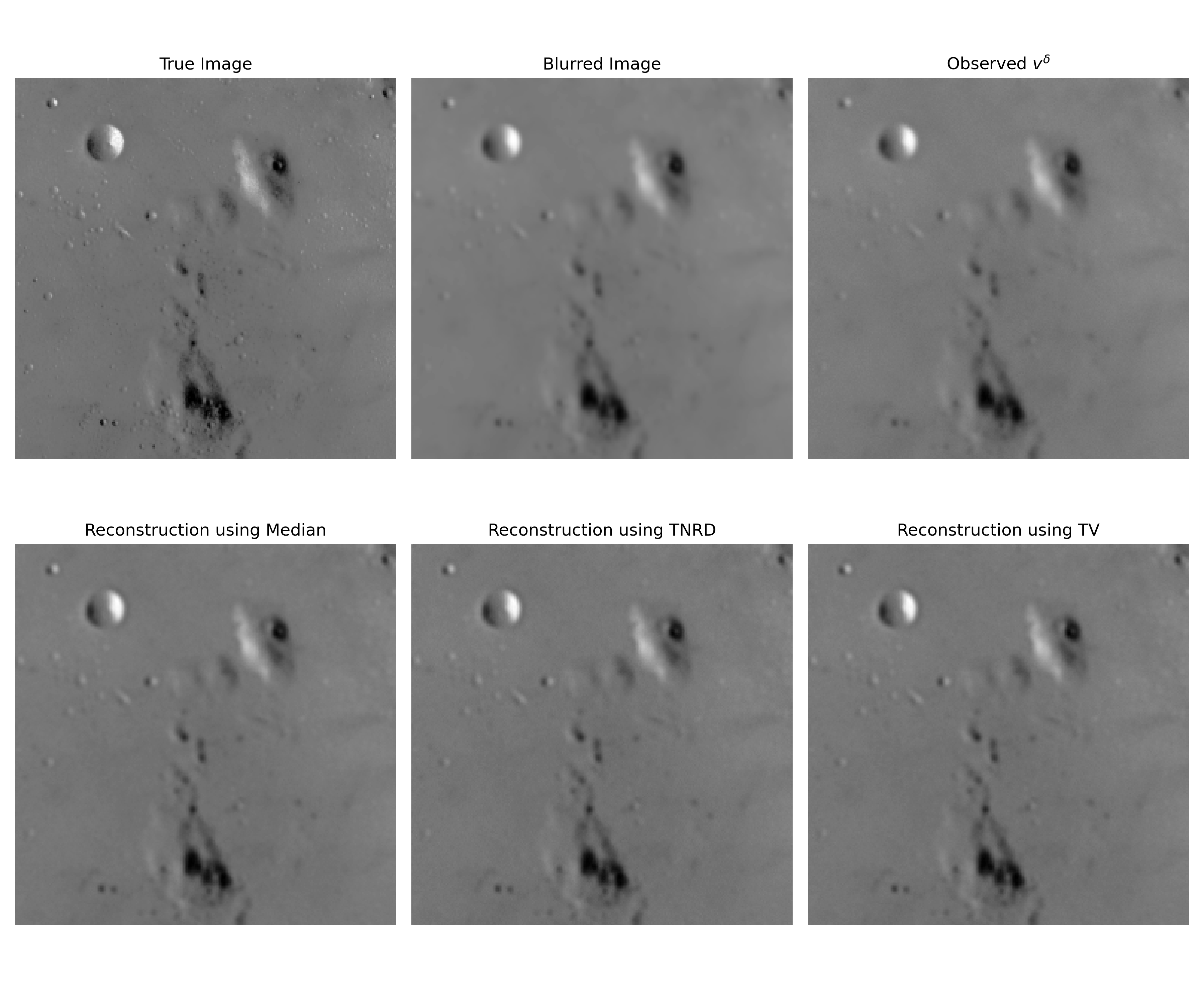}
\hfill
\includegraphics[width=0.45\textwidth]{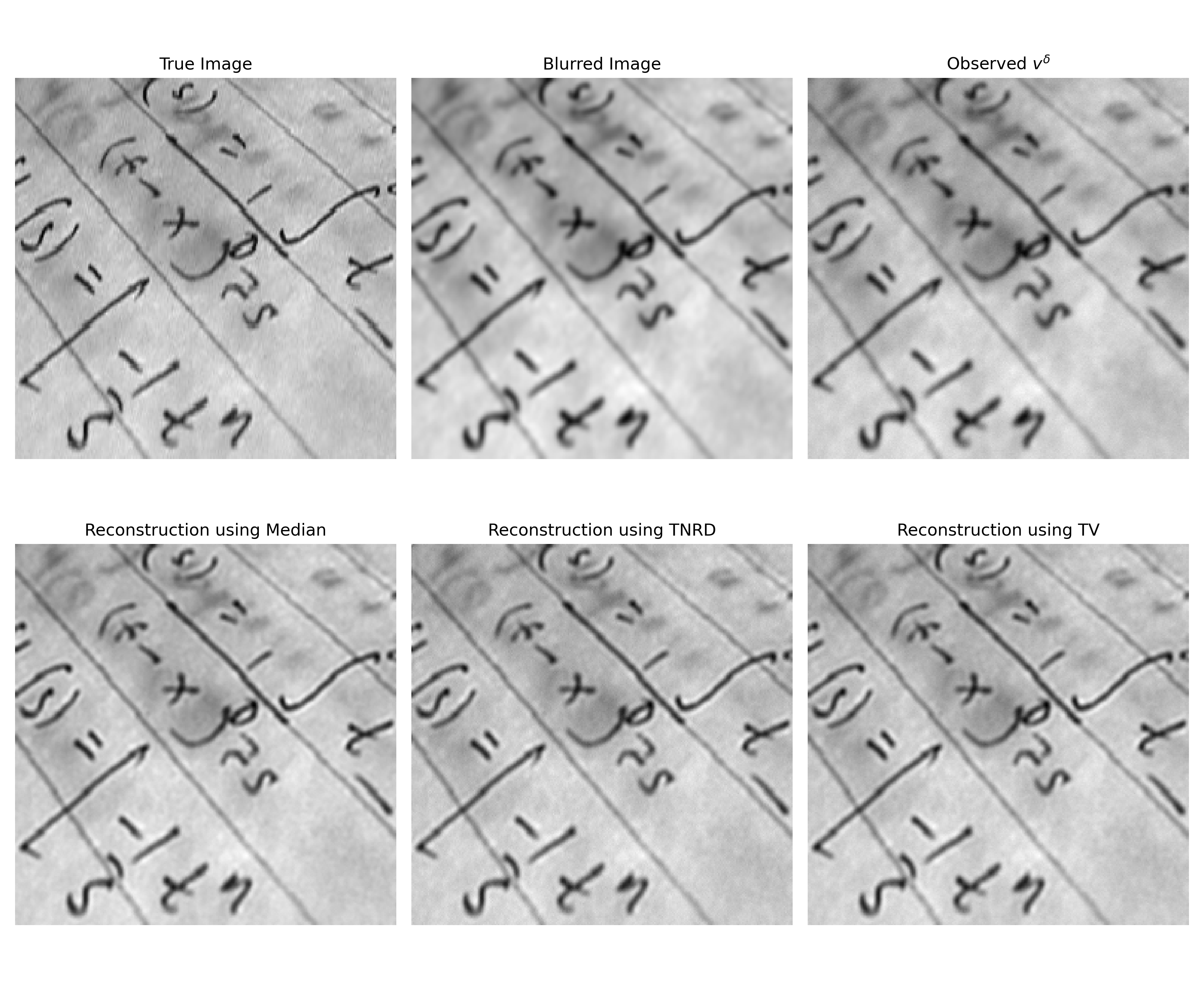}

\caption{Reconstruction results for Shepp-Logan (top left), Coins (top right), Moon (bottom left) and Text (bottom right)  with $\delta_{\mathrm{rel}} = 0.005$.}
\label{fig:reconstruction_results}
\end{figure}
%%%%%%%%%%%%%%%%%%%%%%%%%%%%%%%%%%%%%%%%%%%%%%%%%%%%%%%%%%%%%%%%
\begin{figure}[htbp]
\centering
\begin{subfigure}{0.48\textwidth}
    \includegraphics[width=\linewidth]{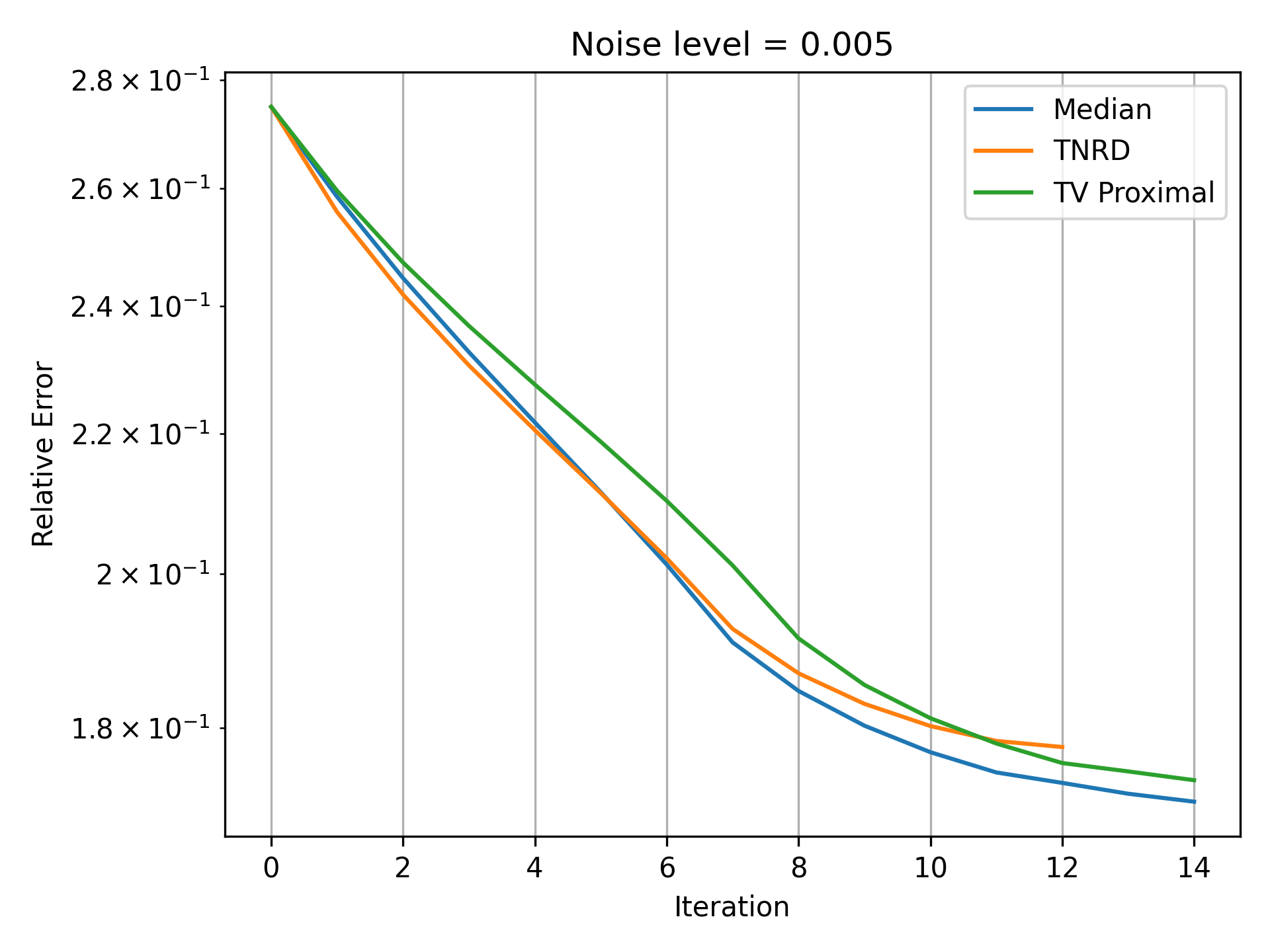}
    \caption{$\delta_{\mathrm{rel}} = 0.005$}
\end{subfigure}
\hfill
\begin{subfigure}{0.48\textwidth}
    \includegraphics[width=\linewidth]{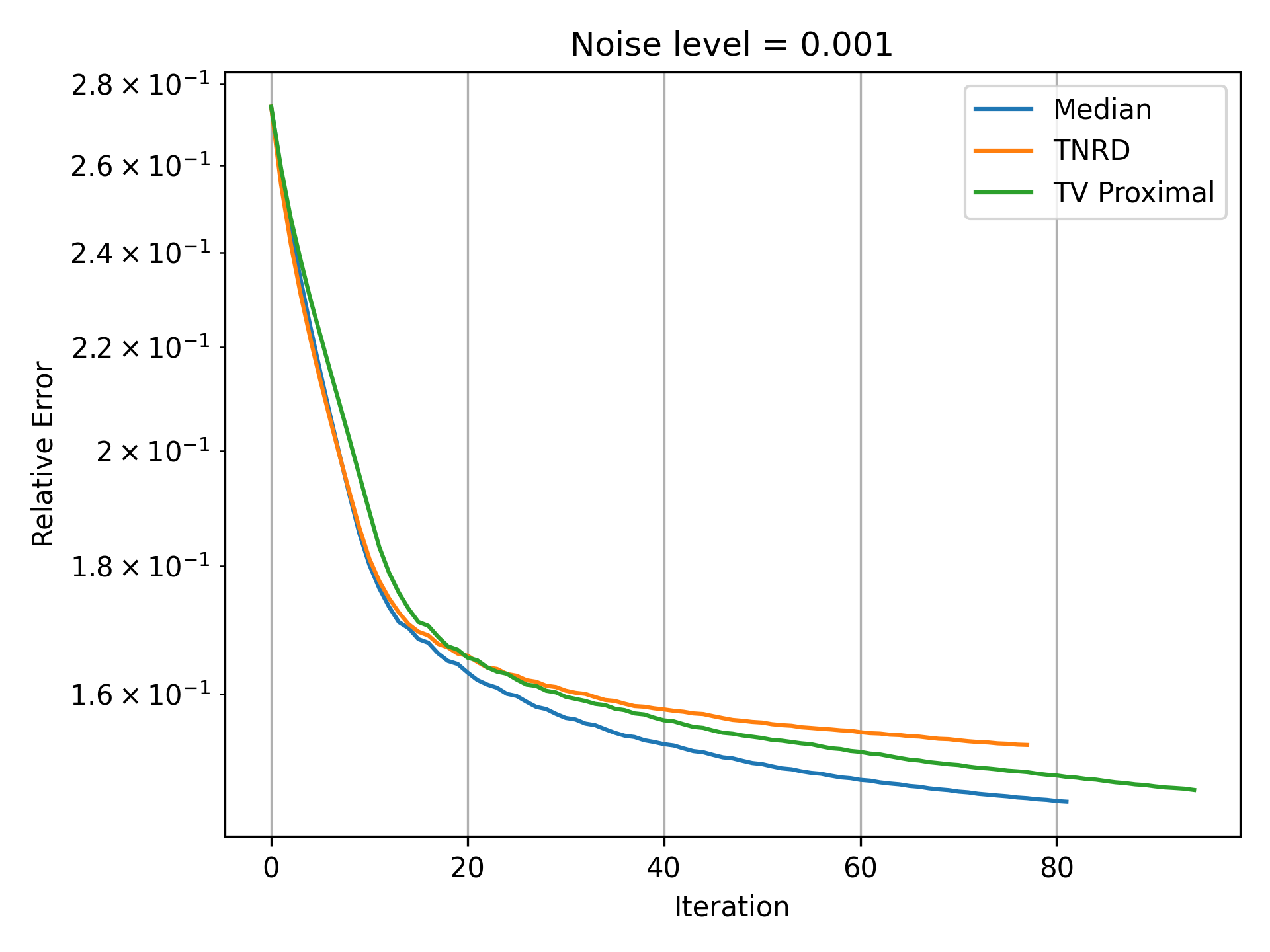}
    \caption{$\delta_{\mathrm{rel}} = 0.001$}
\end{subfigure}

\vspace{0.3cm}

\begin{subfigure}{0.48\textwidth}
    \includegraphics[width=\linewidth]{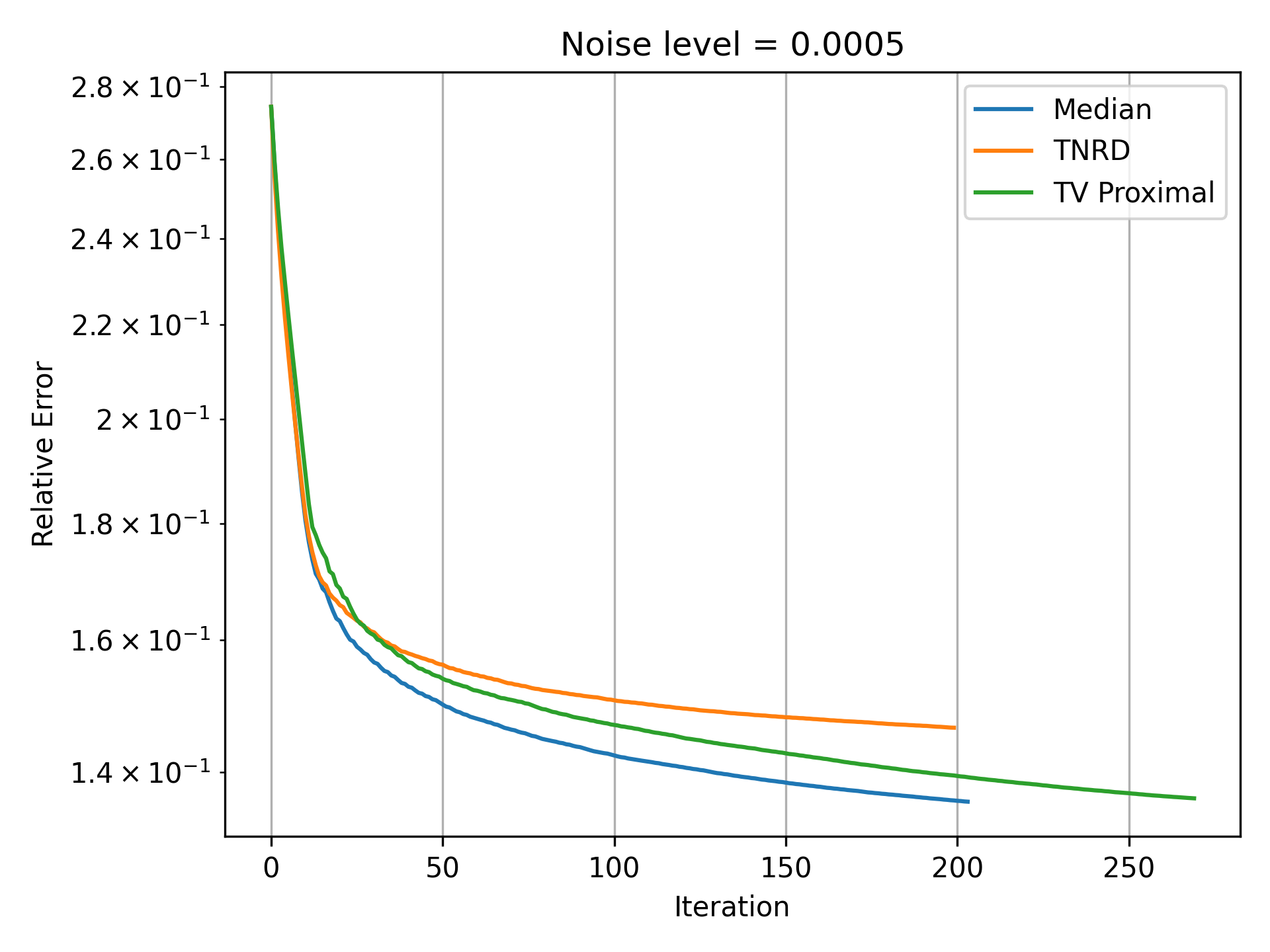}
    \caption{$\delta_{\mathrm{rel}} = 0.0005$}
\end{subfigure}
\hfill
\begin{subfigure}{0.48\textwidth}
    \includegraphics[width=\linewidth]{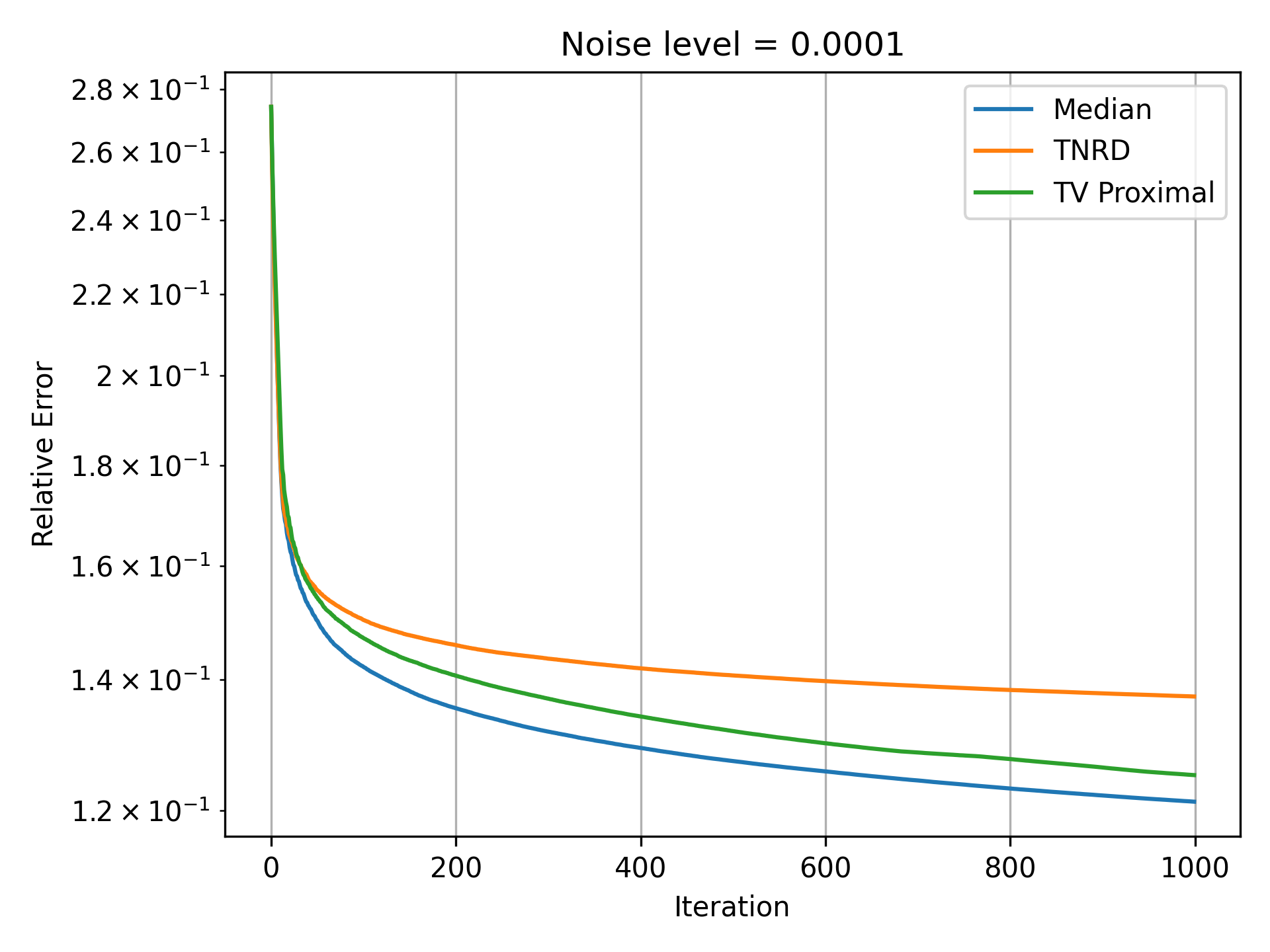}
    \caption{$\delta_{\mathrm{rel}} = 0.0001$}
\end{subfigure}
\caption{Relative error versus iteration for different denoisers (Median, TNRD, TV proximal) under varying noise levels. The TV proximal and median denoisers exhibit stable and contractive behavior, while the TNRD denoiser shows slower or non-monotone convergence.}
\label{fig:rel_error_plots}
\end{figure}
%%%%%%%%%%%%%%%%%%%%%%%%%%%%%%%%%%%%%%%%%%%%%%%%%%%%%%%%%%%%%%
\begin{table}[htbp]
\centering
\caption{\texttt{DDIR} results for the Shepp--Logan image.}
\footnotesize
\setlength{\tabcolsep}{4pt}
\begin{tabular}{c|cccc|cccc|cccc}
\hline
& \multicolumn{4}{c|}{Median} 
& \multicolumn{4}{c|}{TNRD} 
& \multicolumn{4}{c}{TV Proximal} \\
\hline
% \cmidrule(lr){2-5} \cmidrule(lr){6-9} \cmidrule(lr){10-13}
$\delta_{\mathrm{rel}}$ 
& $k_{dp}$ & RE & PSNR & SSIM
& $k_{dp}$ & RE & PSNR & SSIM
& $k_{dp}$ & RE & PSNR & SSIM \\
\hline
0.005  & 14 & 0.17 & 27.63 & 0.9053 & 12 & 0.18 & 27.31 & 0.8633 & 15 & 0.17 & 27.57 & 0.8960 \\
0.001  & 80 & 0.14 & 29.08 & 0.9534 & 76 & 0.15 & 28.61 & 0.9431 & 100 & 0.15 & 29.02 & 0.9518 \\
0.0005 & 203 & 0.14 & 29.64 & 0.9629 & 199 & 0.15 & 29.01 & 0.9533 & 267 & 0.14 & 29.62 & 0.9614 \\
0.0001 & 999 & 0.12 & 30.63 & 0.9737 & 999 & 0.14 & 29.57 & 0.9636 & 999 & 0.13 & 30.35 & 0.9708 \\
\hline
\end{tabular}
\label{tab:shepp_full}
\end{table}
\begin{table}[htbp]
\centering
\caption{\texttt{DDIR} results for the Coins image.}
\footnotesize
\setlength{\tabcolsep}{4pt}
\begin{tabular}{c|cccc|cccc|cccc}
\hline
& \multicolumn{4}{c|}{Median} 
& \multicolumn{4}{c|}{TNRD} 
& \multicolumn{4}{c}{TV Proximal} \\
\hline
$\delta_{\mathrm{rel}}$
& $k_{dp}$ & RE & PSNR & SSIM
& $k_{dp}$ & RE & PSNR & SSIM
& $k_{dp}$ & RE & PSNR & SSIM \\
\hline
0.005  & 10 & 0.09 & 27.99 & 0.8509 & 10 & 0.09 & 27.91 & 0.8342 & 10 & 0.09 & 27.95 & 0.8443 \\
0.001  & 93 & 0.07 & 29.84 & 0.8995 & 95 & 0.08 & 29.64 & 0.8933 & 103 & 0.07 & 29.77 & 0.8982 \\
0.0005 & 285 & 0.07 & 30.64 & 0.9150 & 253 & 0.07 & 30.27 & 0.9073 & 344 & 0.07 & 30.61 & 0.9147 \\
0.0001 & 999 & 0.06 & 31.57 & 0.9314 & 999 & 0.06 & 31.03 & 0.9232 & 999 & 0.06 & 31.37 & 0.9291 \\
\hline
\end{tabular}
\label{tab:coins_full}
\end{table}
\begin{table}[htbp]
\centering
\caption{\texttt{DDIR} results for the Moon image.}
\footnotesize
\setlength{\tabcolsep}{4pt}
\begin{tabular}{c|cccc|cccc|cccc}
\hline
& \multicolumn{4}{c|}{Median} 
& \multicolumn{4}{c|}{TNRD} 
& \multicolumn{4}{c}{TV Proximal} \\
\hline
$\delta_{\mathrm{rel}}$
& $k_{dp}$ & RE & PSNR & SSIM
& $k_{dp}$ & RE & PSNR & SSIM
& $k_{dp}$ & RE & PSNR & SSIM \\
\hline
0.005  & 2 & 0.03 & 38.15 & 0.9283 & 2 & 0.03 & 37.71 & 0.9129 & 2 & 0.03 & 37.90 & 0.9195 \\
0.001  & 15 & 0.02 & 40.23 & 0.9559 & 14 & 0.02 & 40.34 & 0.9557 & 13 & 0.02 & 40.26 & 0.9556 \\
0.0005 & 40 & 0.02 & 40.93 & 0.9618 & 33 & 0.02 & 41.10 & 0.9626 & 34 & 0.02 & 41.07 & 0.9623 \\
0.0001 & 490 & 0.01 & 42.48 & 0.9722 & 352 & 0.02 & 42.68 & 0.9730 & 394 & 0.01 & 42.78 & 0.9731 \\
\hline
\end{tabular}
\label{tab:moon_full}
\end{table}
\begin{table}[htbp]
\centering
\caption{\texttt{DDIR} results for the Text image.}
\footnotesize
\setlength{\tabcolsep}{4pt}
\begin{tabular}{c|cccc|cccc|cccc}
\hline
& \multicolumn{4}{c|}{Median} 
& \multicolumn{4}{c|}{TNRD} 
& \multicolumn{4}{c}{TV Proximal} \\
\hline
$\delta_{\mathrm{rel}}$
& $k_{dp}$ & RE & PSNR & SSIM
& $k_{dp}$ & RE & PSNR & SSIM
& $k_{dp}$ & RE & PSNR & SSIM \\
\hline
0.005  & 6 & 0.03 & 35.11 & 0.9337 & 5 & 0.04 & 34.54 & 0.9148 & 5 & 0.04 & 34.70 & 0.9241 \\
0.001  & 33 & 0.02 & 37.94 & 0.9607 & 31 & 0.02 & 38.11 & 0.9604 & 28 & 0.02 & 38.14 & 0.9608 \\
0.0005 & 79 & 0.02 & 38.82 & 0.9663 & 63 & 0.02 & 39.00 & 0.9665 & 67 & 0.02 & 39.13 & 0.9669 \\
0.0001 & 709 & 0.02 & 40.23 & 0.9741 & 547 & 0.02 & 40.48 & 0.9751 & 588 & 0.02 & 40.82 & 0.9753 \\
\hline
\end{tabular}
\label{tab:text_full}
\end{table}
\subsubsection{Comparison and validation}
We evaluate the performance of the proposed \texttt{DDIR} method against the PnP framework~\cite{ebner2024plug} and a classical Wiener deconvolution baseline, the latter of which is denoted as FBP for consistency with standard inverse methods. The experiments are conducted on the standard \texttt{shepp\_logan} image. Both PnP and \texttt{DDIR} leverage the same TV proximal denoiser. To facilitate a fair comparison, both methods are initialized with the same initial guess, $u_0^\delta = v^\delta$. For PnP reconstruction we iterate using 
\[u_{\alpha, k+1}^\delta = \Phi(\alpha, \cdot) \circ (u_{\alpha, k}^\delta - sG^*(Gu_{\alpha, k}^\delta - v^\delta)),\]
where $\Phi(\alpha, \cdot) = \operatorname{prox}_{\alpha s \mathrm{TV}}$ with $\alpha = 0.01$ and $s =1$ (smaller than $2/ \|G\|^2$) as suggested in \cite{ebner2024plug} for the convergence of fixed-point iteration.

The visual comparison of reconstruction quality with FBP and PnP are shown in Fig.~\ref{fig:Rec_PnP_VS_DDIR} along with the qualitative metrices as in Table~\ref{tab:comparision}.
\begin{figure}[htbp]
    \centering
    % First row
    \begin{subfigure}[b]{0.8\textwidth}
        \centering
        \includegraphics[width=\textwidth]{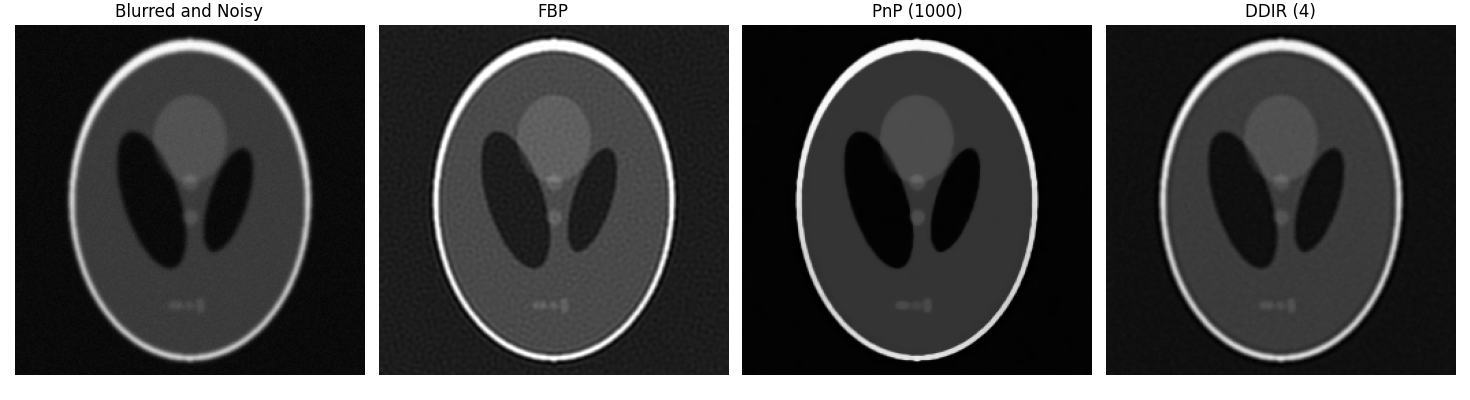}
    \end{subfigure}
    \hfill
    \begin{subfigure}[b]{0.8\textwidth}
        \centering
        \includegraphics[width=\textwidth]{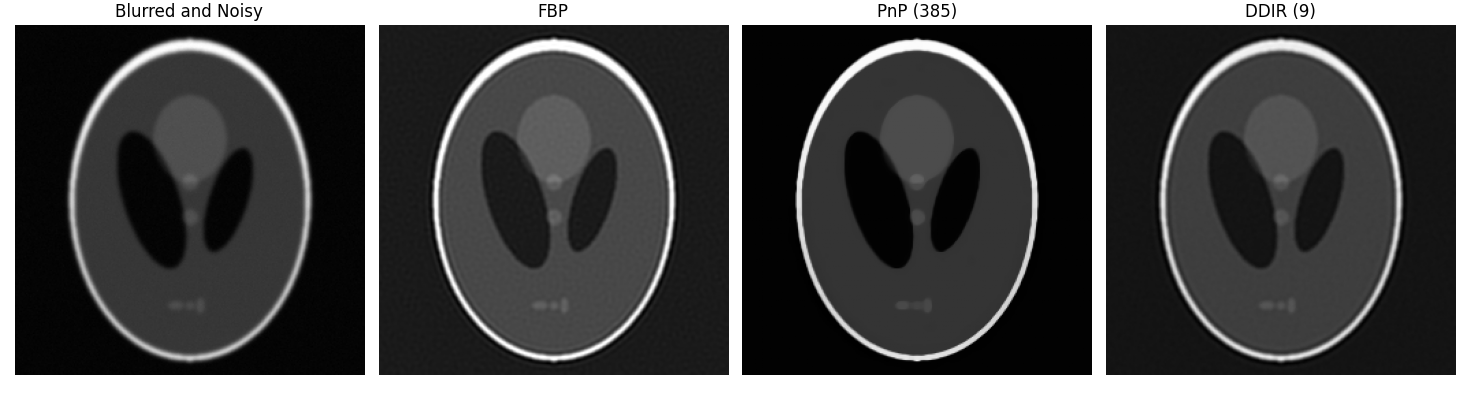}
    \end{subfigure}
    \vspace{0.1cm}
    
    % Second row
    \begin{subfigure}[b]{0.8\textwidth}
        \centering
        \includegraphics[width=\textwidth]{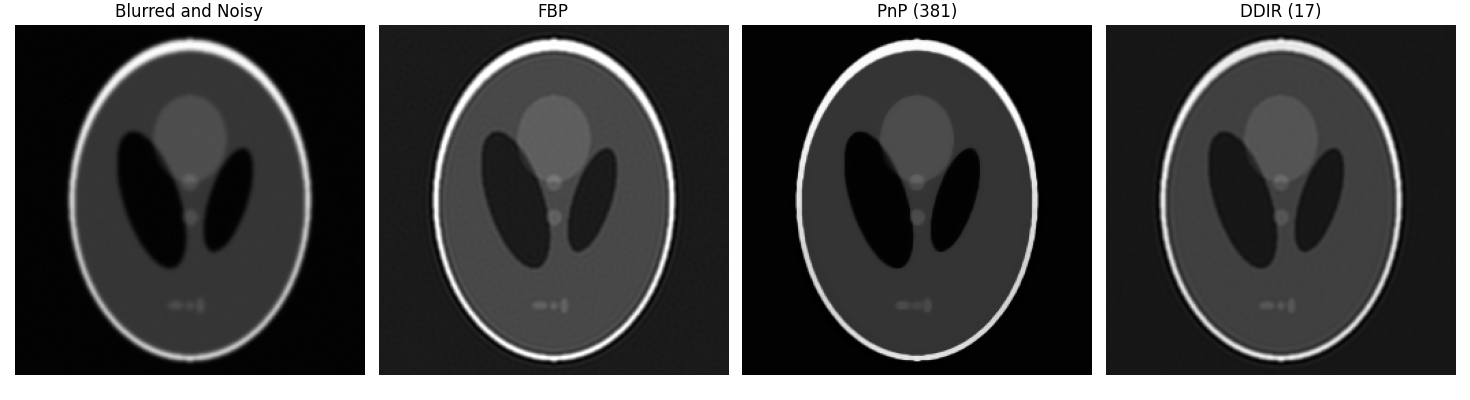}
    \end{subfigure}
    \hfill
    \begin{subfigure}[b]{0.8\textwidth}
        \centering
        \includegraphics[width=\textwidth]{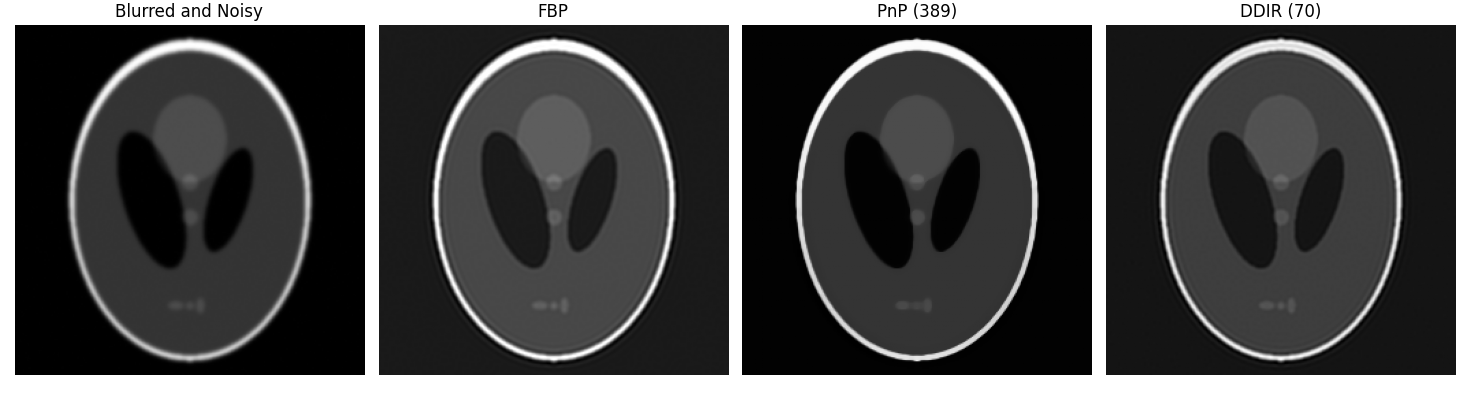}
    \end{subfigure}
   
     \vspace{0.1cm}
    
    % Second row
    \begin{subfigure}[b]{0.8\textwidth}
        \centering
        \includegraphics[width=\textwidth]{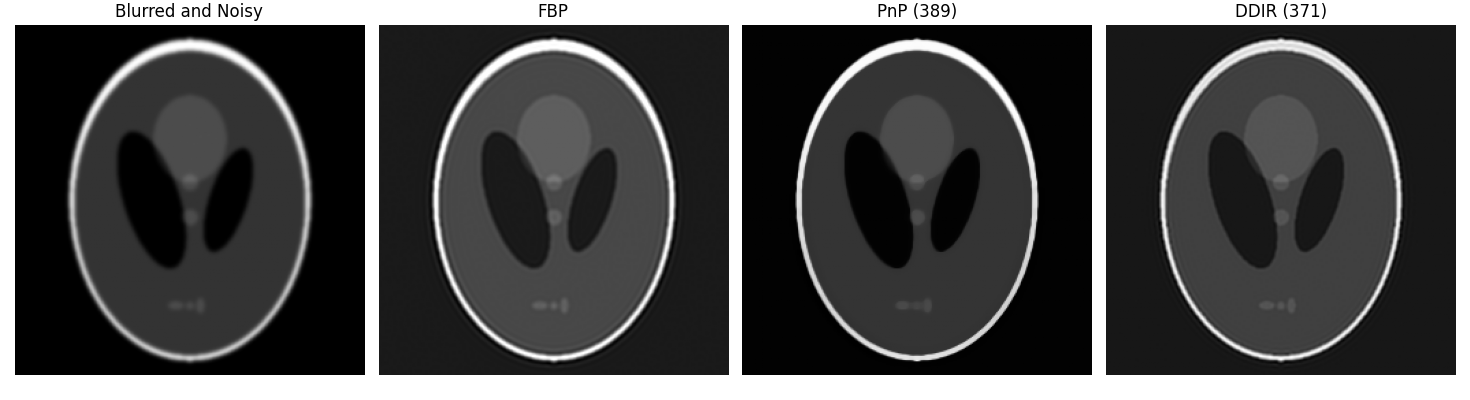}
    \end{subfigure}
    \hfill
    \begin{subfigure}[b]{0.8\textwidth}
        \centering
        \includegraphics[width=\textwidth]{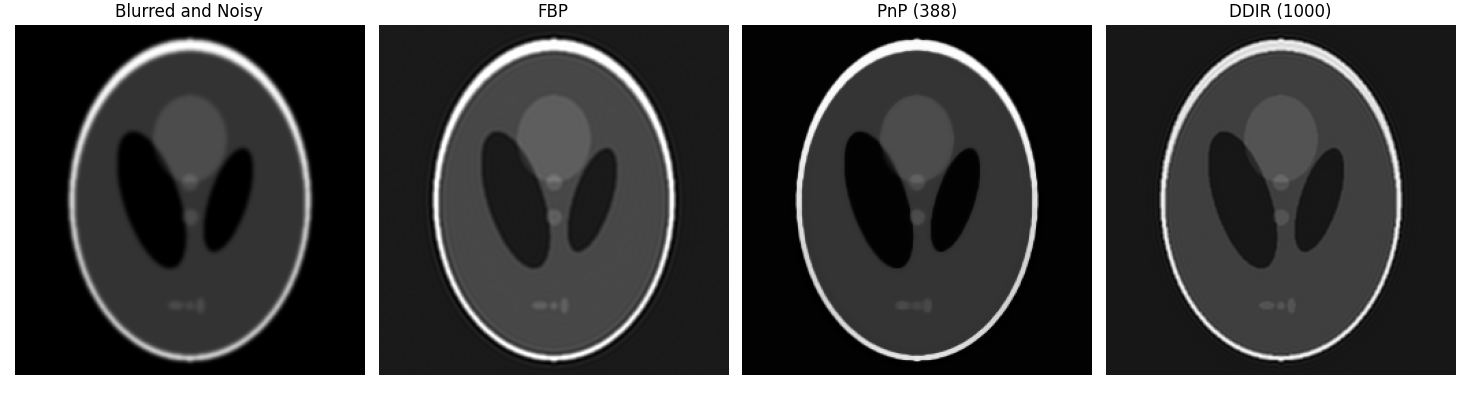}
    \end{subfigure}
    \caption{Visual comparison of reconstructed images  with $\delta_{\mathrm{rel}} = 0.01, 0.005, 0.003, 0.001, 0.0003$ and $0.0001$ respectively.}
    \label{fig:Rec_PnP_VS_DDIR}
\end{figure}
The convergence of PnP and \texttt{DDIR} are shown in Fig.~\ref{fig:St_PnP_VS_DDIR} where the relative error is plotted against the iteration number.
\begin{figure}[htbp]
    \centering
    % First row
    \begin{subfigure}[b]{0.48\textwidth}
        \centering
        \includegraphics[width=\textwidth]{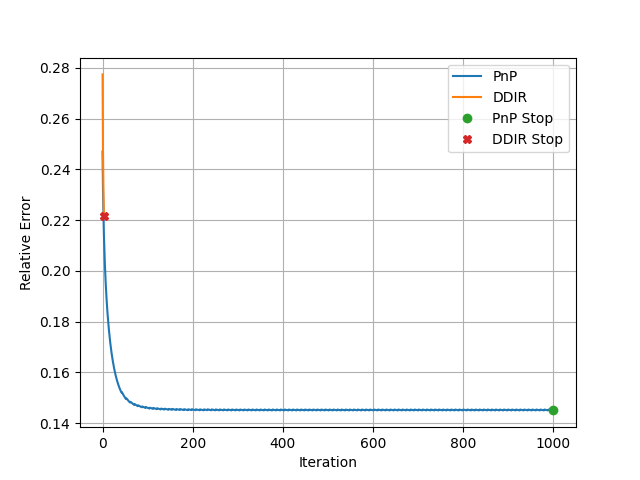}
    \end{subfigure}
    \hfill
    \begin{subfigure}[b]{0.48\textwidth}
        \centering
        \includegraphics[width=\textwidth]{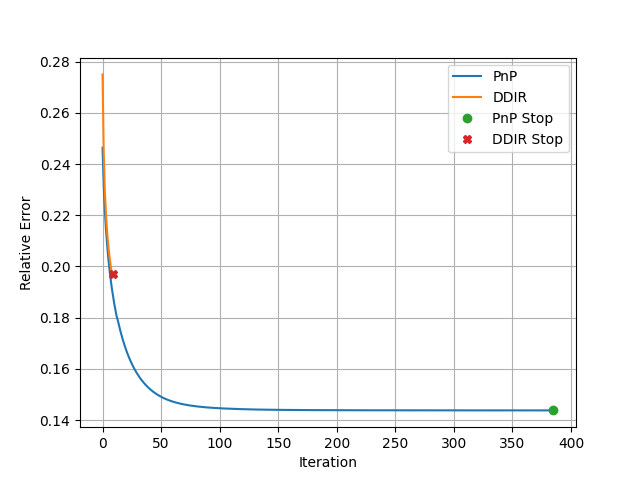}
    \end{subfigure}
    \vspace{0.1cm}
    
    % Second row
    \begin{subfigure}[b]{0.48\textwidth}
        \centering
        \includegraphics[width=\textwidth]{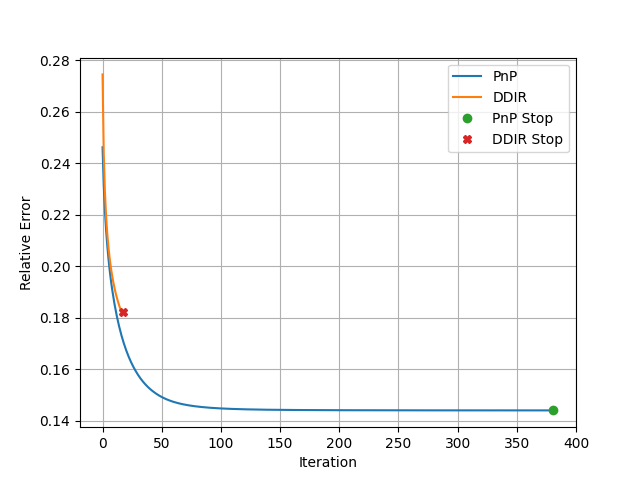}
    \end{subfigure}
    \hfill
    \begin{subfigure}[b]{0.48\textwidth}
        \centering
        \includegraphics[width=\textwidth]{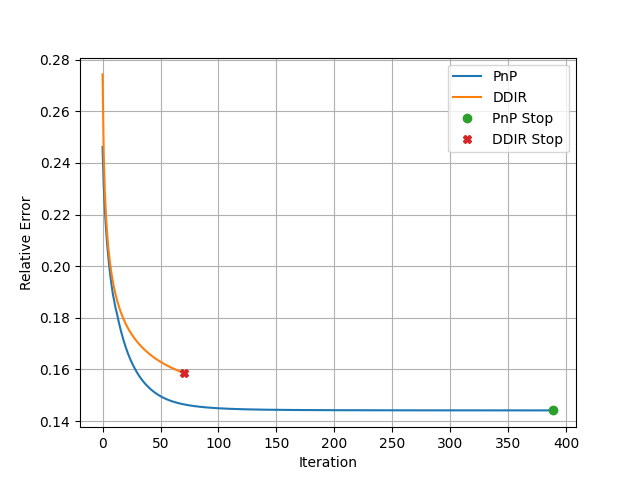}
    \end{subfigure}
   
     \vspace{0.1cm}
    
    % Second row
    \begin{subfigure}[b]{0.48\textwidth}
        \centering
        \includegraphics[width=\textwidth]{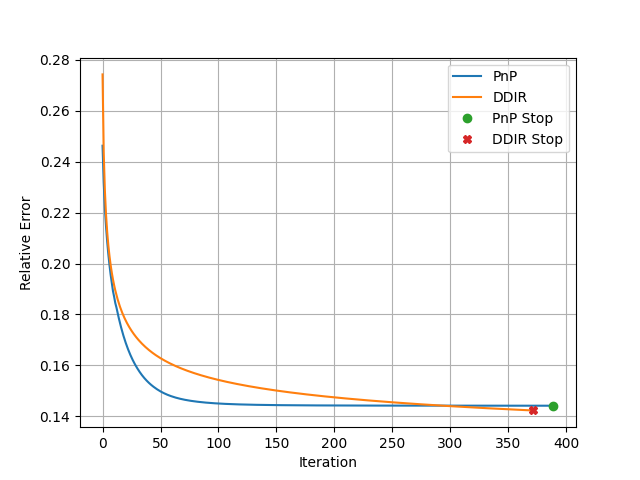}
    \end{subfigure}
    \hfill
    \begin{subfigure}[b]{0.48\textwidth}
        \centering
        \includegraphics[width=\textwidth]{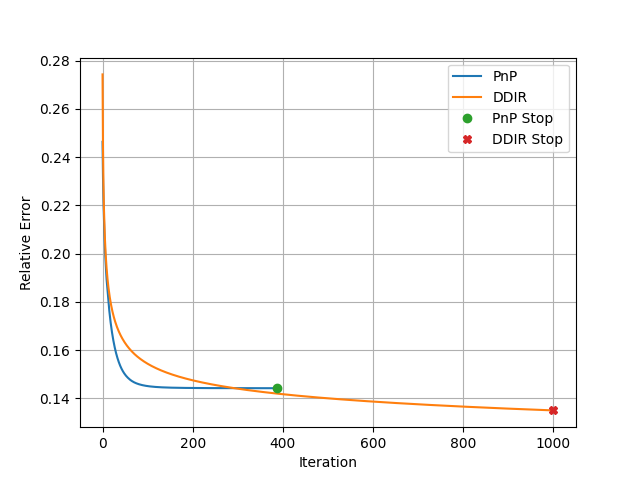}
    \end{subfigure}
    \caption{Stopping index comparision with $\delta_{\mathrm{rel}} = 0.01, 0.005, 0.003, 0.001, 0.0003$ and $0.0001$ respectively.}
    \label{fig:St_PnP_VS_DDIR}
\end{figure}
PnP exhibits a gradual decrease in error, but its stopping index is governed solely by the relative change between successive iterates $\|u_{k+1}^\delta - u_k^\delta \| / \|u_k^\delta\| \leq 10^{-6}$ or after reaching a maximum number of iterations, $K=1000.$ This criterion does not explicitly account for the noise level in the data and may lead to either premature stopping or unnecessary iterations, see Fig.~\ref{fig:St_PnP_VS_DDIR}. In contrast, \texttt{DDIR} utilizes the discrepancy principle~\eqref{eqn:dp} to optimize termination. By stopping as soon as the residual matches the known noise level \texttt{DDIR} prevents over-fitting and achieves superior accuracy with fewer iterations. In summary, while the PnP framework provides a flexible and powerful approach for incorporating priors via denoisers, its performance is highly dependent on parameter tuning and stopping criteria. The proposed \texttt{DDIR} method addresses these limitations by integrating the discrepancy principle, leading to
improved reconstruction quality,
more reliable stopping behavior and
reduced sensitivity to parameter choices.

\begin{table}[htbp]
\centering
\caption{Comparison results for image deblurring on Shepp--Logan image}.
\footnotesize
\setlength{\tabcolsep}{4pt}
\begin{tabular}{c|cccc|cccc|cccc}
\hline
& \multicolumn{4}{c|}{FBP} 
& \multicolumn{4}{c|}{PnP} 
& \multicolumn{4}{c}{\texttt{DDIR}} \\
\hline
% \cmidrule(lr){2-5} \cmidrule(lr){6-9} \cmidrule(lr){10-13}
$\delta_{\mathrm{rel}}$ 
& $k_{dp}$ & RE & PSNR & SSIM
& $k_{dp}$ & RE & PSNR & SSIM
& $k_{dp}$ & RE & PSNR & SSIM \\
\hline
0.01  & - & 0.19 & 26.99 & 0.834 & 1000 & 0.15 & 29.06 & 0.974 & 4 & 0.22 & 25.39 & 0.883 \\
0.005  & - & 0.18 & 27.22 & 0.913 & 385 & 0.14 & 29.15 & 0.979 & 9 & 0.19 & 26.41 & 0.927 \\
0.003  & - & 0.17 & 27.27 & 0.932 & 381 & 0.14 & 29.14 & 0.980 & 17 & 0.18 & 27.09 & 0.938 \\
0.001  & - & 0.18 & 27.30 & 0.942 & 389 & 0.14 & 29.13 & 0.979 & 70 & 0.16 & 28.29 & 0.953 \\
0.0005 & - & 0.14 & 29.64 & 0.963 & 387 & 0.15 & 29.01 & 0.953 & 176 & 0.14 & 29.62 & 0.961 \\
0.0003 & - & 0.18 & 27.30 & 0.943 & 389 & 0.14 & 29.13 & 0.979 & 371 & 0.14 & 29.24 & 0.965 \\
0.0001 & - & 0.17 & 27.30 & 0.943 & 388 & 0.14 & 29.13 & 0.979 & 1000 & 0.13 & 29.70 & 0.970 \\
\hline
\end{tabular}
\label{tab:comparision}
\end{table}

\subsection{Phase retrieval CT} In this section we consider the phase retrieval problem, a fundamental challenge  in high-resolution imaging modalities, including X-ray crystallography, coherent diffraction microscopy, and others \cite{fannjiang2020numerics}.   The phase retrieval problem aims to recover a function $u$ from phaseless measurements, i.e., from the intensity-only data $|\mathcal{G}_c u|^2$ of the transmitted wave field. In our experimental setup, $\mathcal{G}_c  \in \mathbb{R}^{m \times n}$ denotes the discrete Radon transform, implemented using the \texttt{radon} function from the \texttt{skimage.transform} library, with 60 projection angles uniformly distributed over the interval $[1^\circ, 180^\circ]$. We define the forward operator as
\begin{equation}\label{eqn:model_PR_compact}
\mathcal{G}_p(u) := |\mathcal{G}_c u|^2 = v,
\end{equation}
where $|\mathcal{G}_c u|^2$ denotes the element-wise squared magnitude of the projection data. 
We tackle this nonlinear inverse problem through the deployment of Algorithm~\ref{alg:DDIR}.
Note that the operator $\mathcal{G}_p$ is continuously Fr\'{e}chet differentiable. Its  derivative $\mathcal{G}_p'(u) \in \mathbb{R}^{m \times n}$ and its corresponding adjoint $\mathcal{G}_p'(u)^* \in \mathbb{R}^{n \times m}$ are given by
\begin{equation*}
    \mathcal{G}_p'(u)[q] = 2(w \odot (\mathcal{G}_cq))  = 2 (\operatorname{diag}(w) \mathcal{G}_c)q,  \qquad
\mathcal{G}_p'(u)^*[r] =2 \mathcal{G}_c^{\top} ((\mathcal{G}_cu) \odot r)
\end{equation*}
where $q \in \mathbb{R}^n, r\in \mathbb{R}^m$, $w= \mathcal{G}_cu$ and $\odot$ denotes the Hadamard product. Thus, the corresponding Jacobian matrix are given by 
\begin{equation}\label{eqn:PR_frechet}
   \mathbf{J}(u)= \mathcal{G}_p'(u) = 2 (\operatorname{diag}(w) \mathcal{G}_c), \qquad  \mathbf{J}(u)^* =
\mathcal{G}_p'(u)^* =2 \mathcal{G}_c^{\top} \operatorname{diag}(w) .
\end{equation}
We employ the formulations \eqref{eqn:model_PR_compact} and \eqref{eqn:PR_frechet}, utilizing only the noisy measurements 
$v^\delta$, to implement Algorithm~\ref{alg:DDIR}. The numerical experiments are carried out on the \texttt{binary\_blobs} image provided by \texttt{skimage.data}. The true image along with the noisy data with $\delta_{\mathrm{rel}} = 0.005$ is shown in Fig.~\ref{fig:PR_true}. 
\begin{figure}[htbp]
    \centering
    \includegraphics[width=0.6\textwidth]{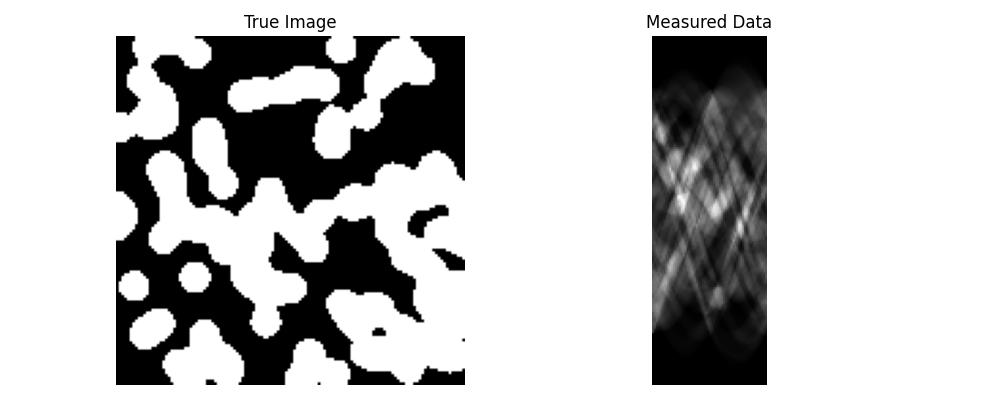}
    \caption{Ground truth image for phase-retrieval CT and the corresponding measurements corrupted by  $\delta_{\mathrm{rel}}=0.005$.}
    \label{fig:PR_true}
\end{figure}
The experimental setup considers an image domain of size $128 \times 128$ (i.e., $n = 16{,}384$ pixels) and in that case the number of detector elements are $d = \lceil \sqrt{2}\cdot 128 \rceil = 182$. With 60 projection angles, this yields a total of $m = 60 \times 182 = 10{,}920$ measurements. Consequently, the resulting system satisfies $m < n$, leading to an under determined  ill-posed  problem. The fixed parameters of Algorithm~\ref{alg:DDIR} are chosen as $\tau = 1.5$, $\gamma_0 = 0.01$, $\gamma_1 = 2$, $\nu_0 = 0.05$, and $\nu_1 = 0.1$, while the maximum number of iterations is set to 1000. The reconstruction results and the corresponding relative error curves for different noise levels~\footnote{ For $\delta_{\mathrm{rel}} = 0.0005$, the corresponding reconstruction results and relative error plots are provided in Appendix~\ref{app:extra_results}.} using median and TV denoisers are presented in Fig.~\ref{fig:PR_rec_results} and Fig.~\ref{fig:PR_RE_curves}, respectively. Additional quantitative metrics, together with the stopping index, are reported in Table~\ref{Tab:PR}~\footnote{The residual curves have also been computed and are presented in Appendix~\ref{app:extra_results}.}.
\begin{table}[ht]
\footnotesize
\centering
\begin{tabular}{c|cccc|cccc}
\hline
$\delta_{\mathrm{rel}}$ & \multicolumn{4}{c|}{Median} & \multicolumn{4}{c}{TV} \\
\hline
&$k_{\mathrm{dp}}$ & RE & PSNR & SSIM & $k_{\mathrm{dp}}$ & RE & PSNR & SSIM \\
\hline
0.01 & 47 & 0.1755 & 18.12 & 0.6633 & 56& 0.1748 & 18.15 & 0.6574 \\
0.005 & 114 & 0.1397 & 20.11 & 0.7218 & 124 & 0.1381 & 20.21 & 0.7259 \\
0.003 & 214 & 0.1198 & 21.45 & 0.7618  & 220 & 0.1205 & 21.39 & 0.7605 \\
0.001 & 737 & 0.0888 & 24.04 & 0.8136 & 783 & 0.0899 & 23.94 & 0.8127 \\
0.0005 & 1000 & 0.0830 & 24.63 & 0.8218  & 1000 & 0.0859 & 24.34 & 0.8185  \\
\hline
\end{tabular}
\caption{Reconstruction performance on phase retrieval CT under different noise levels.}
\label{Tab:PR}
\end{table}
%%%%%%%%%%%%%%%%%%%%%%%%%%%%%%%%%%%%%%%%%%%%%%%%%%%%%%%%%%%%%
\begin{figure}[htbp]
    \centering

    % First row
    \begin{subfigure}[b]{0.49\textwidth}
        \centering
        \includegraphics[width=\textwidth]{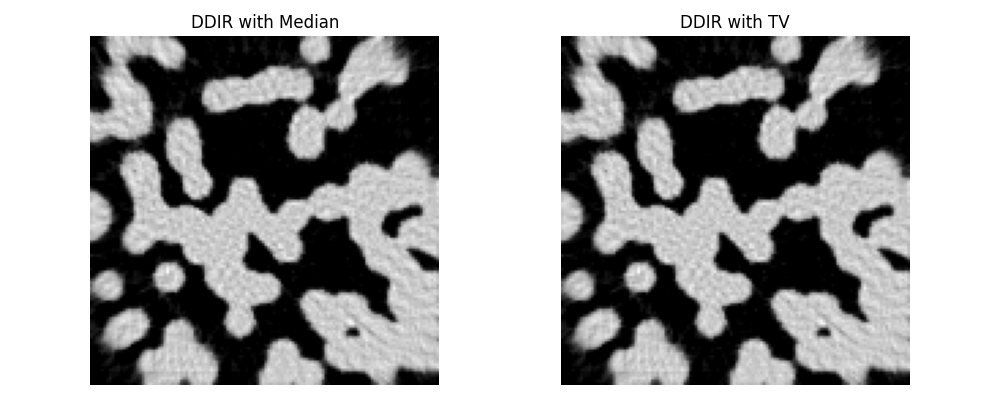}
    \end{subfigure}
    % \hfill
    \begin{subfigure}[b]{0.49\textwidth}
        \centering
        \includegraphics[width=\textwidth]{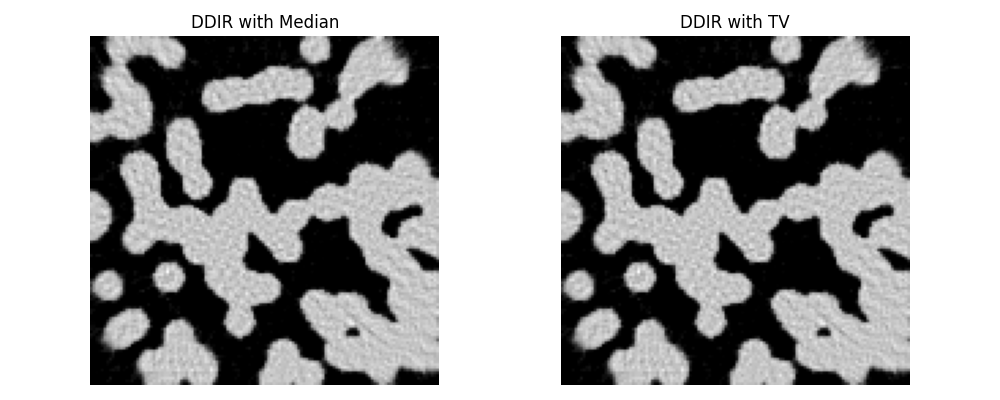}
    \end{subfigure}
    \vspace{0.2cm}
    % Second row
    \begin{subfigure}[b]{0.49\textwidth}
        \centering
        \includegraphics[width=\textwidth]{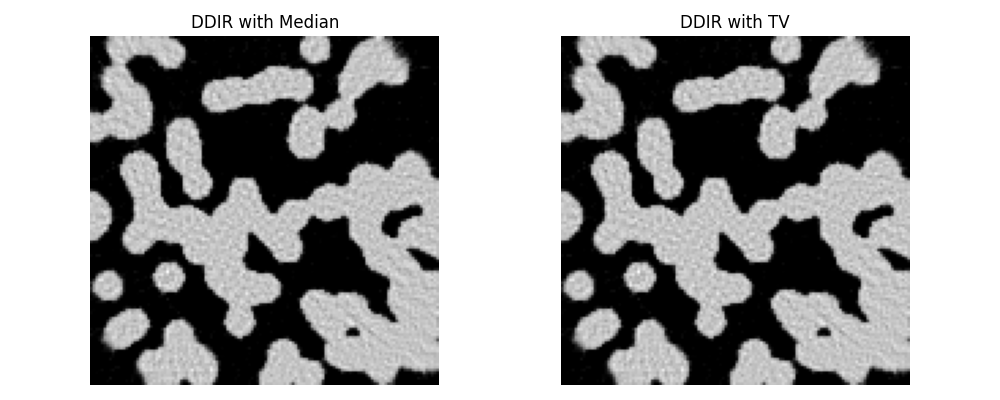}
    \end{subfigure}
    % \hfill
    \begin{subfigure}[b]{0.49\textwidth}
        \centering
        \includegraphics[width=\textwidth]{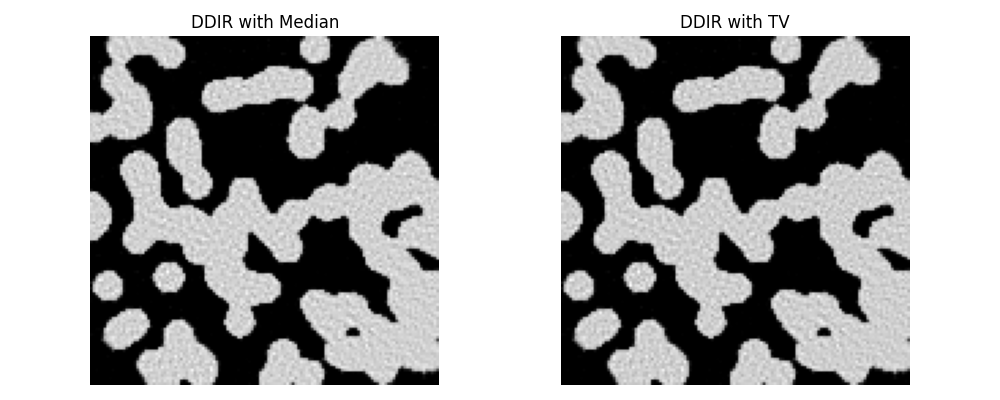}
    \end{subfigure}
    \caption{Reconstruction results for phase retrieval CT with $\delta_{\mathrm{rel}}     = 0.01$ (top left), $0.005$ (top right), $0.003$ (bottom left) and $0.001$ (bottom right).}
    \label{fig:PR_rec_results}
\end{figure}
\begin{figure}[htbp]
    \centering

    % First row
    \begin{subfigure}[b]{0.49\textwidth}
        \centering
        \includegraphics[width=\textwidth]{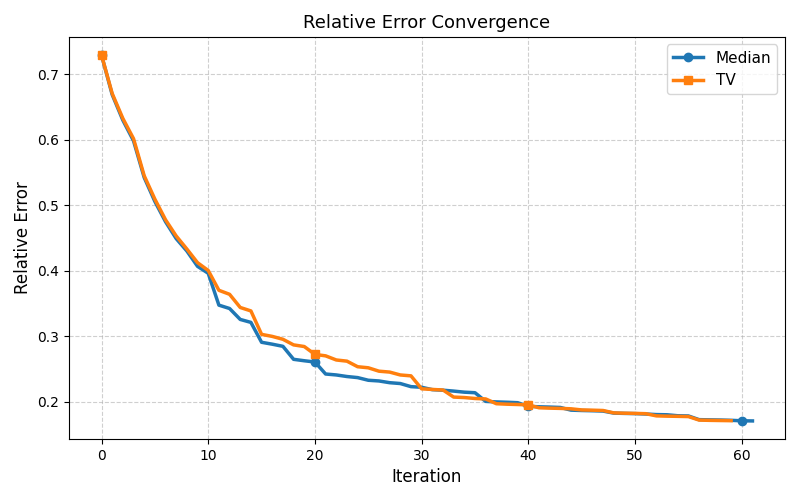}
    \end{subfigure}
    \hfill
    \begin{subfigure}[b]{0.49\textwidth}
        \centering
        \includegraphics[width=\textwidth]{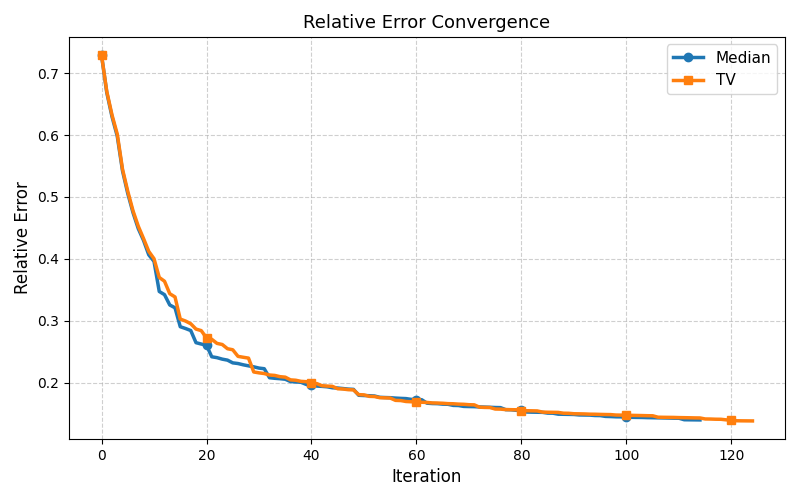}
    \end{subfigure}
    
    \vspace{0.1cm}
    
    % Second row
    \begin{subfigure}[b]{0.49\textwidth}
        \centering
        \includegraphics[width=\textwidth]{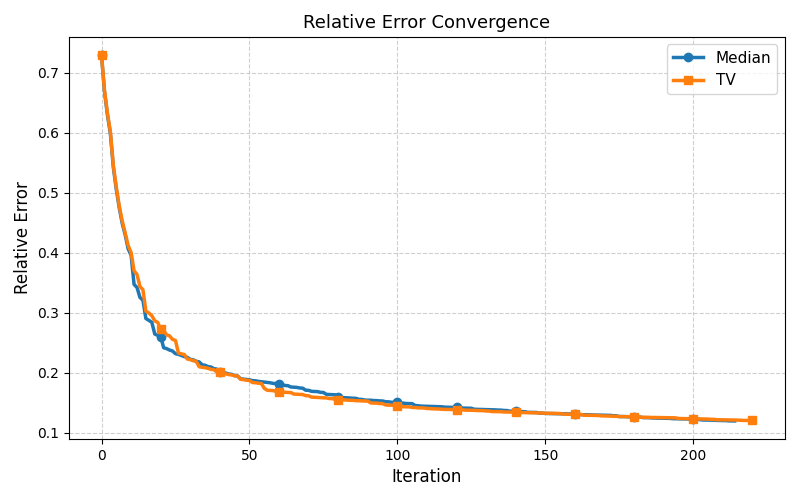}
    \end{subfigure}
    \hfill
    \begin{subfigure}[b]{0.49\textwidth}
        \centering
        \includegraphics[width=\textwidth]{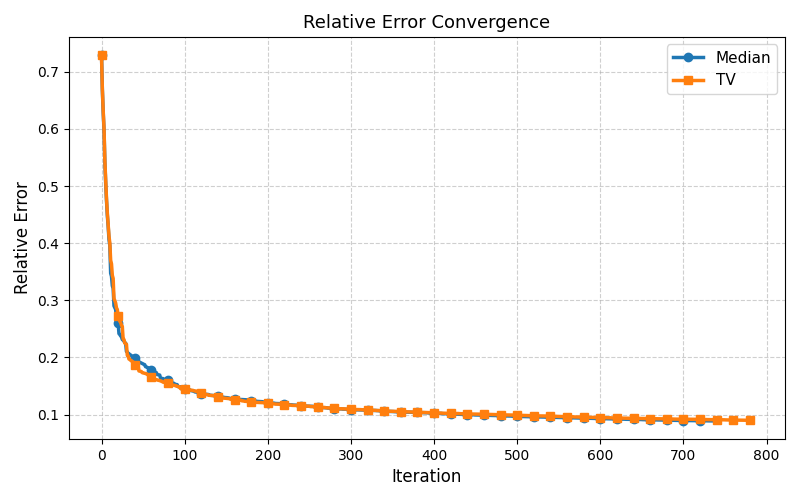}
    \end{subfigure}
    \caption{Relative error plots for phase retrieval CT with $\delta_{\mathrm{rel}}     = 0.01$ (top left), $0.005$ (top right), $0.003$ (bottom left) and $0.001$ (bottom right).}
    \label{fig:PR_RE_curves}
\end{figure}
%%%%%%%%%%%%%%%%%%%%%%%%%%%%%%%%%%%%%%%%%%%%%%%%%%%%%%%%
\section{Conclusion}\label{sec:conclusion} In recent years, image denoisers have emerged as powerful tools for solving general inverse problems by acting as regularizers. A rigorous theoretical understanding of their stability and convergence within the framework of iterative regularization remains largely unexplored. Building upon this line of research, we propose a novel iterative regularization framework that incorporates an averaged denoiser to implicitly enforce prior information. Within this framework, we introduce and rigorously analyze an adaptive denoiser-driven iterative regularization (\texttt{DDIR}) method. The proposed approach is designed to address nonlinear ill-posed image reconstruction problems, and we establish that it constitutes a convergent regularization method when equipped with an \emph{a posteriori} stopping rule.
In particular, under appropriate assumptions, we prove the stability and finite termination of the proposed method. Consequently, it follows that the iterates produced by the \texttt{DDIR} algorithm converge to the true solution in the limit as the noise level vanishes.

Future research will focus on accelerating the proposed method, for instance by incorporating conjugate gradient techniques or, more effectively, by employing the sequential subspace optimization (SESOP) algorithm \cite{elad2007coordinate}. Another promising direction is to enrich the \texttt{DDIR} framework by integrating alternative mechanisms, beyond conventional denoisers, that can effectively encode prior information about the unknown image. Additionally, it would be of interest to develop heuristic or statistical discrepancy stopping criteria, following ideas similar to those presented in~\cite{bajpai2026graph,harrach2020beyond}.

%%%%%%%%%%%%%%%%%%%%%%%%%%%%%%%%%%%%%%%%%%%%%%%%%%%%%

\section*{Data availability}
The source code and underlying datasets utilized in this work are available for the purpose of reproducibility from the authors upon reasonable request.
\section*{Acknowledgment}
The authors would like to thank Dr. Andrea Ebner (University of Mannheim, Germany) and Prof. Markus Haltmeier
(University of Innsbruck, Austria) for sharing their code. Their work provided an essential baseline, and we are grateful for the opportunity to compare our numerical experiments against their approach. Part of this work was done while HB enjoyed the hospitality of the
Department of Mathematics, Indian Institute of Technology Gandhinagar, India.

\bibliographystyle{abbrv}
\bibliography{references}
% --- Beginning of Appendix Section ---
\appendix
% \section*{Appendix} % Optional: main appendix heading
\addcontentsline{toc}{section}{Appendix}

% The following commands ensure Figures/Tables are numbered A.1, A.2, etc.
\counterwithin{figure}{section}
\counterwithin{table}{section}

\section{Additional experimental results} \label{app:extra_results}
In this appendix, we provide supplementary numerical evaluations to further validate the performance of the proposed \texttt{DDIR} method.
\begin{figure}[p] % [p] suggests putting the figure on its own dedicated page
    \centering
    
    % --- First noise level: 0.001 ---
    \begin{minipage}{1.0\textwidth}
        \centering
        \includegraphics[width=0.23\textwidth]{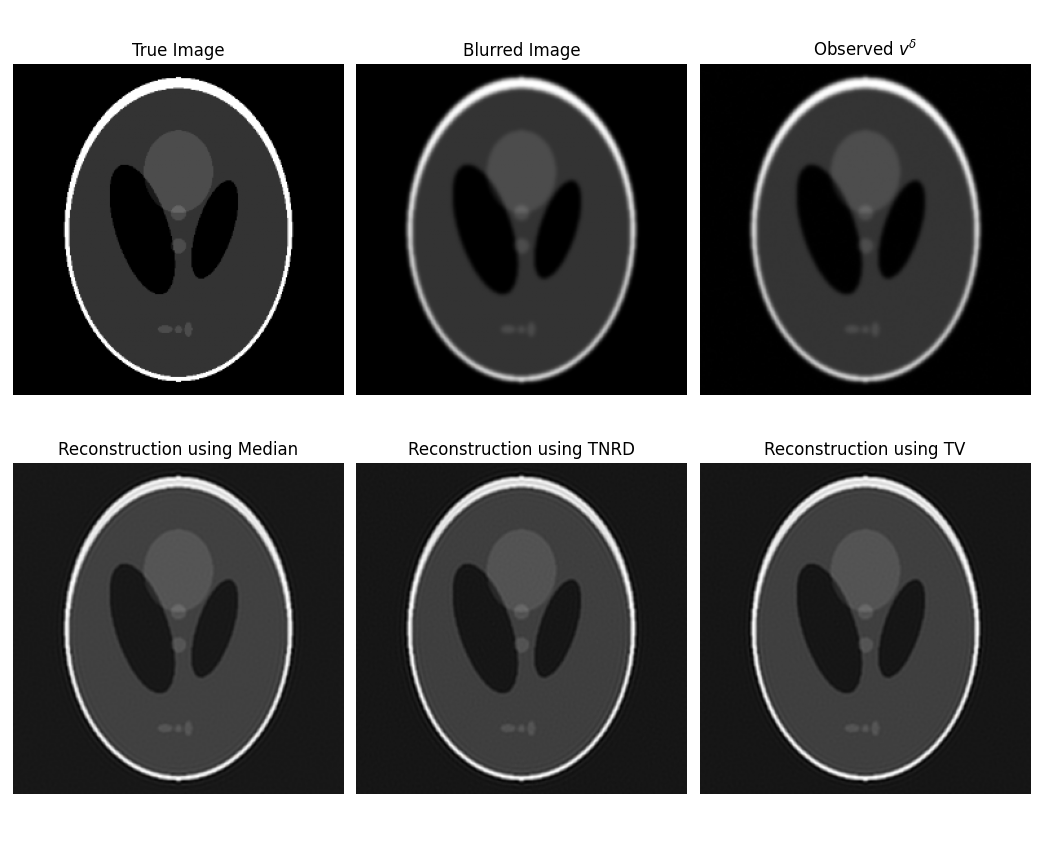} \hfill
        \includegraphics[width=0.23\textwidth]{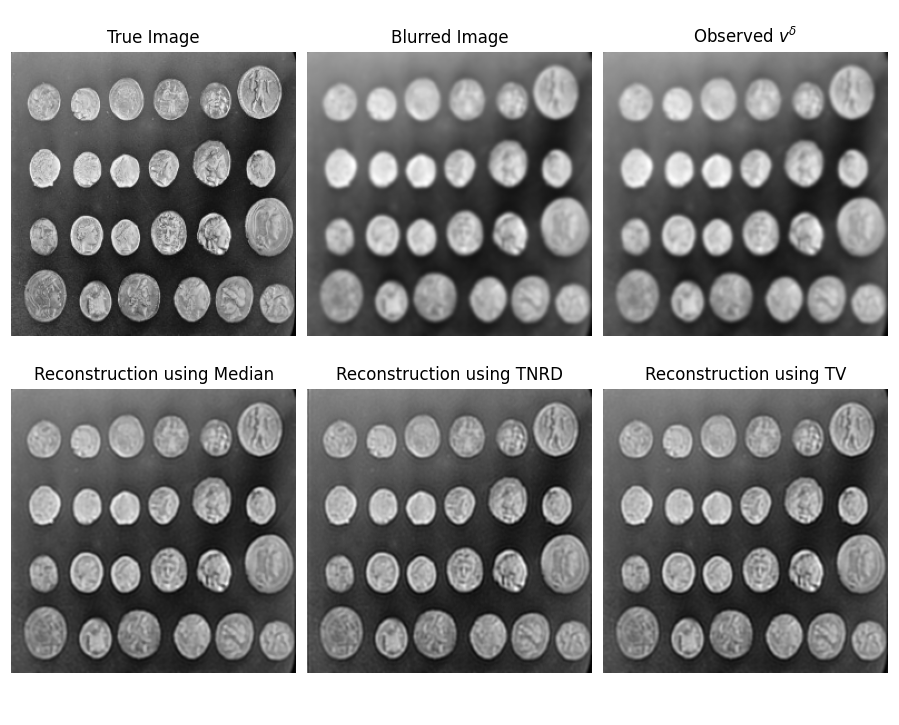} \hfill
        \includegraphics[width=0.23\textwidth]{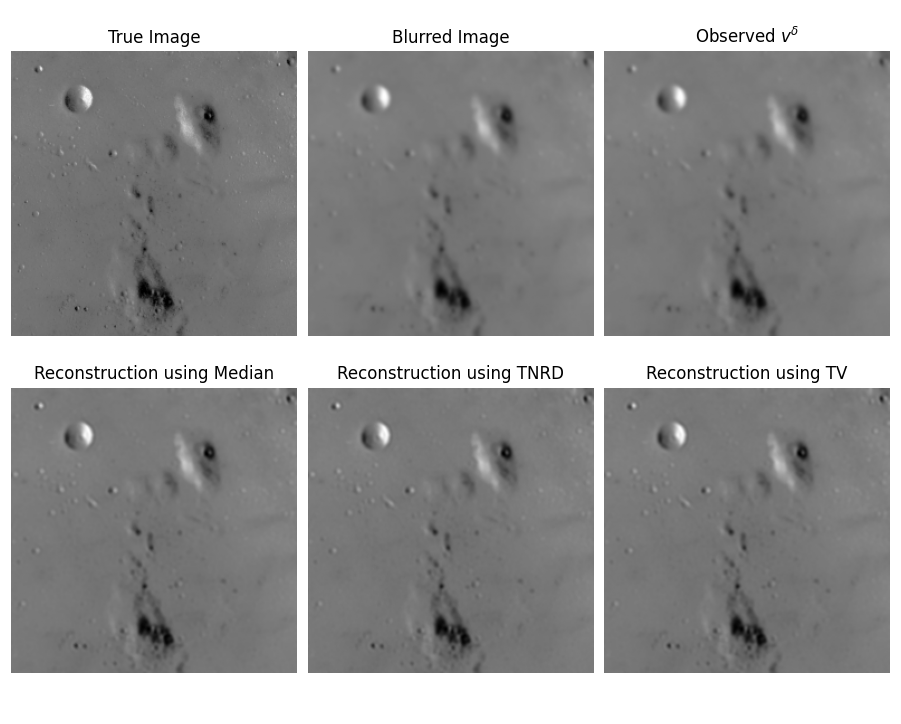} \hfill
        \includegraphics[width=0.23\textwidth]{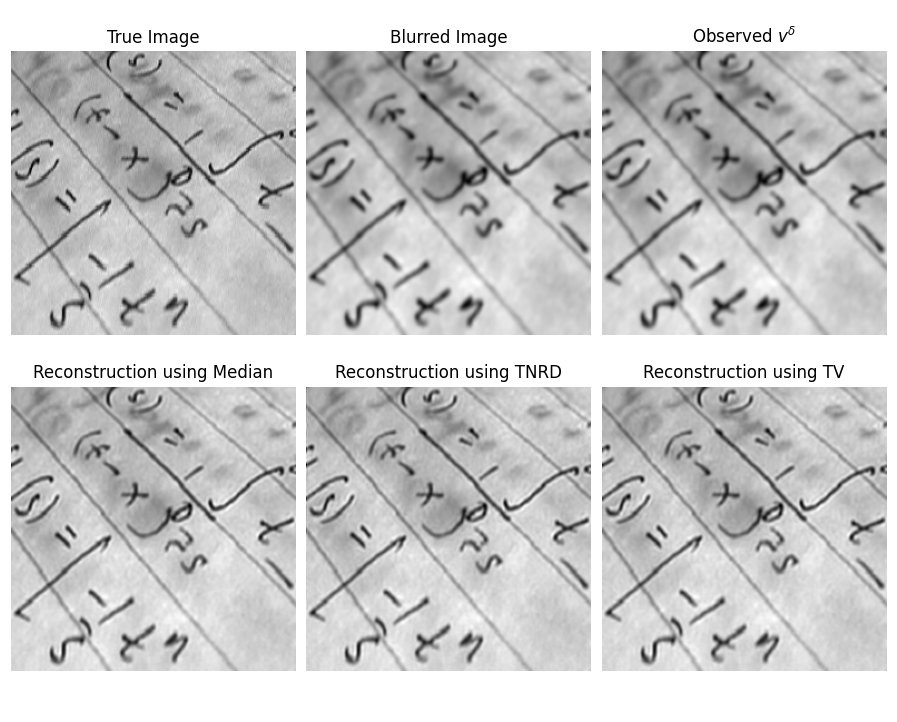}
        \caption*{ (a) $\delta_{\mathrm{rel}} = 0.001$} 
    \end{minipage}

    \vspace{0.5cm} % Vertical gap between the two experimental sets

    % --- Second noise level: 0.0005 ---
    \begin{minipage}{1.0\textwidth}
        \centering
        \includegraphics[width=0.23\textwidth]{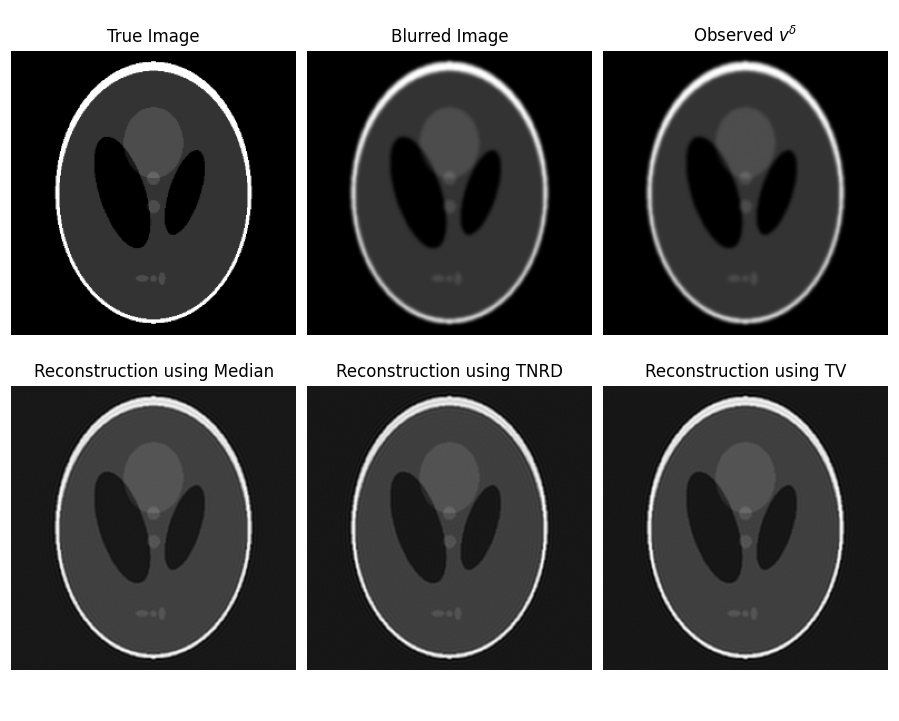} \hfill
        \includegraphics[width=0.23\textwidth]{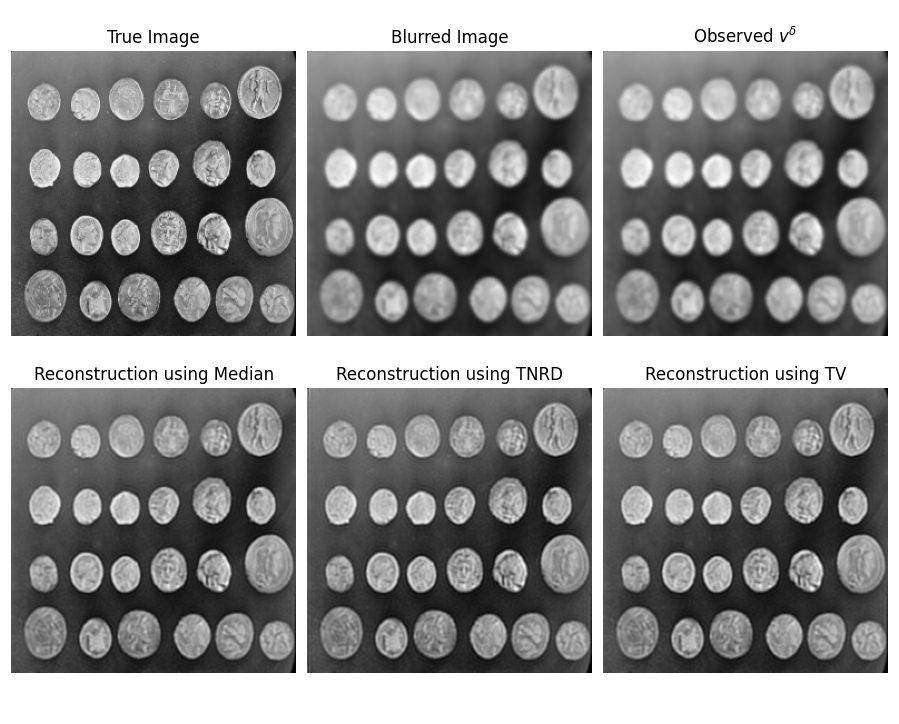} \hfill
        \includegraphics[width=0.23\textwidth]{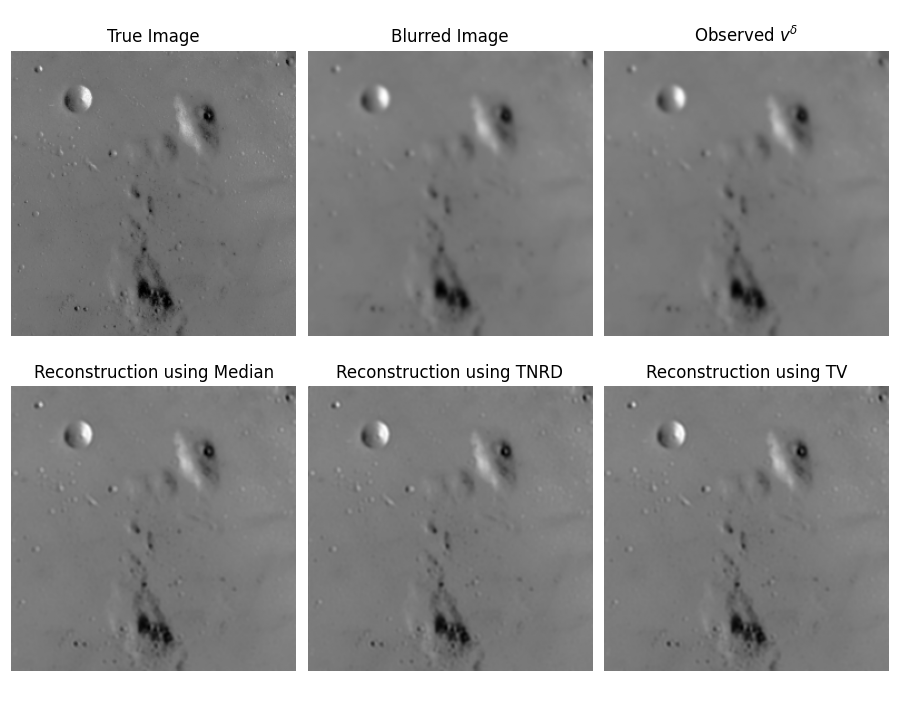} \hfill
        \includegraphics[width=0.23\textwidth]{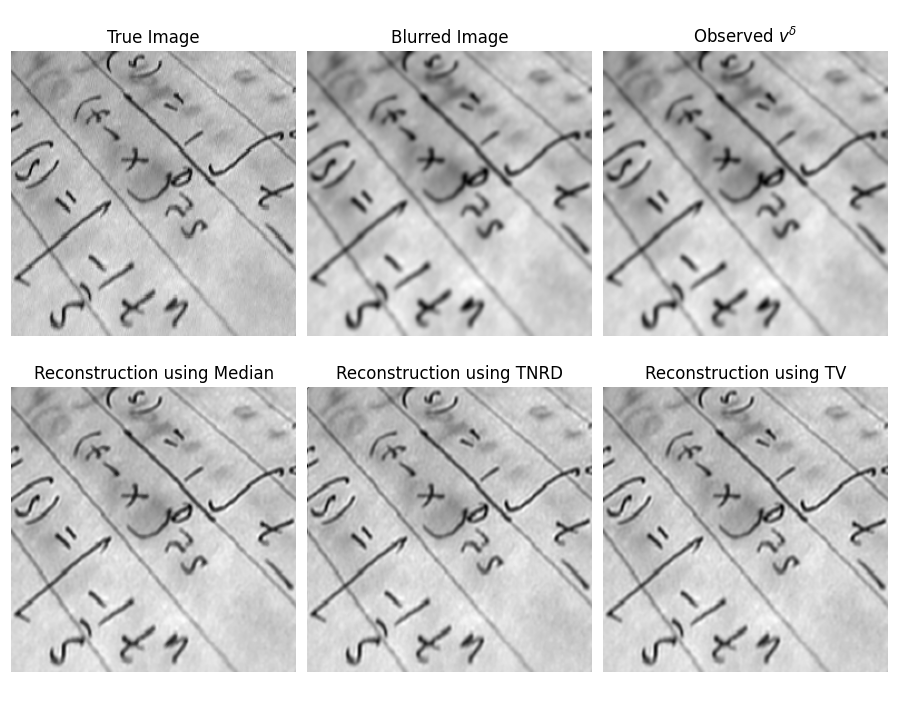}
        \caption*{ (b) $\delta_{\mathrm{rel}} = 0.0005$}
    \end{minipage}

    \vspace{0.3cm}
    \caption{Comparative reconstruction results for image deblurring under two noise levels: $\delta_{\mathrm{rel}} = 0.001$ (top row) and $\delta_{\mathrm{rel}} = 0.0005$ (bottom row).}
    \label{fig:combined_deblurring}
\end{figure}

\begin{figure}[htbp]
\centering
\includegraphics[width=0.49\textwidth]{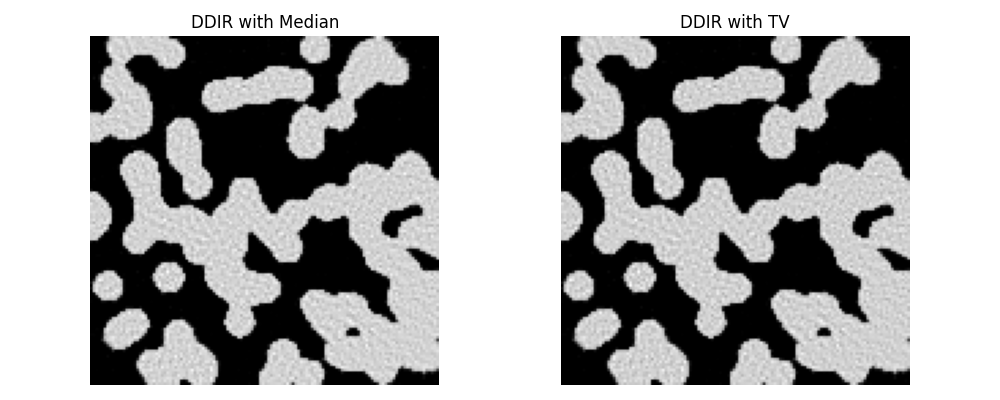}

\vspace{0.4cm}

\includegraphics[width=0.49\textwidth]{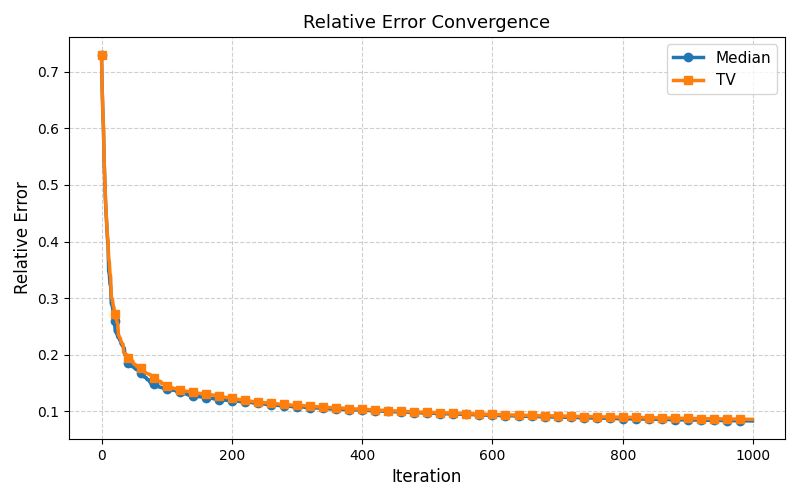}

\caption{Reconstruction results and relative error curves for phase retrieval CT with $\delta_{\mathrm{rel}} = 0.0005$.}
\label{fig:PR_0005}
\end{figure}

% ============================================================
% \subsection{Phase Retrieval CT: Residual Curves}
% ============================================================

\begin{figure}[htbp]
\centering
\includegraphics[width=0.49\textwidth]{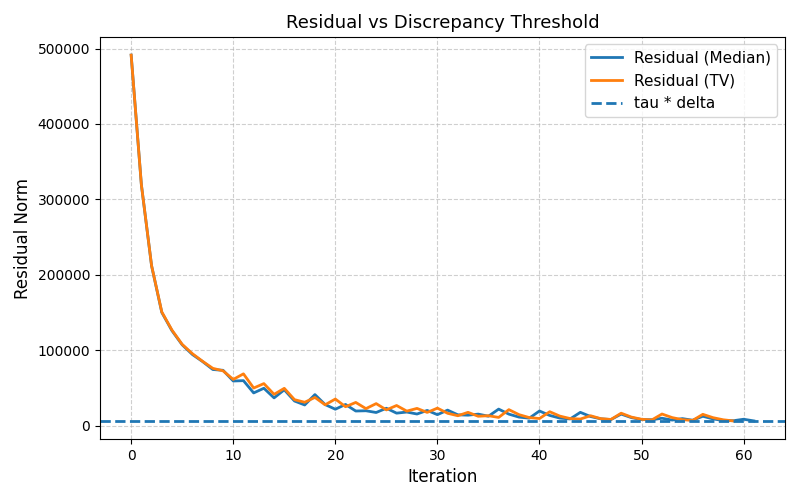}
\hfill
\includegraphics[width=0.49\textwidth]{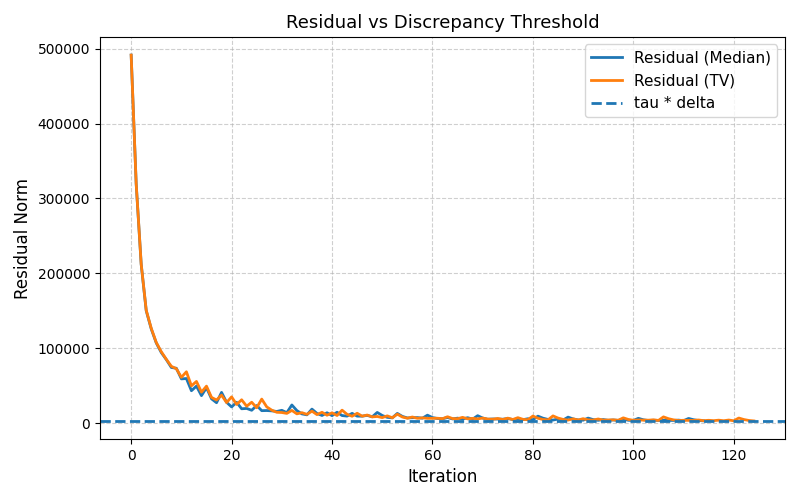}

\vspace{0.3cm}

\includegraphics[width=0.49\textwidth]{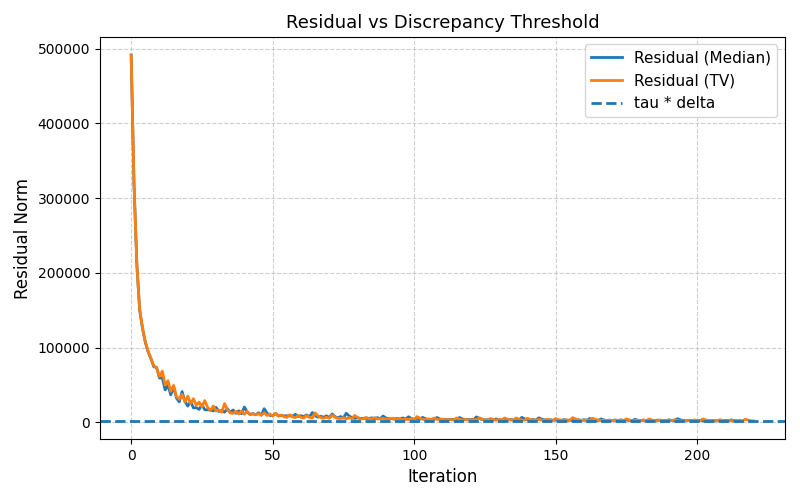}
\hfill
\includegraphics[width=0.49\textwidth]{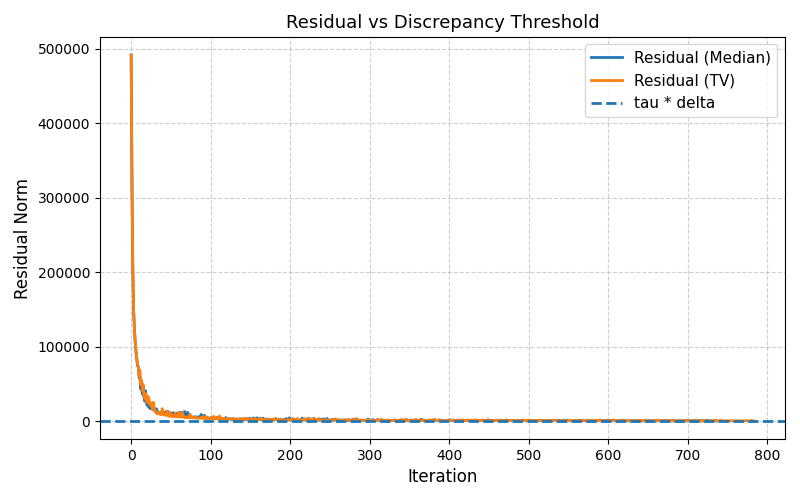}

\vspace{0.3cm}

\includegraphics[width=0.49\textwidth]{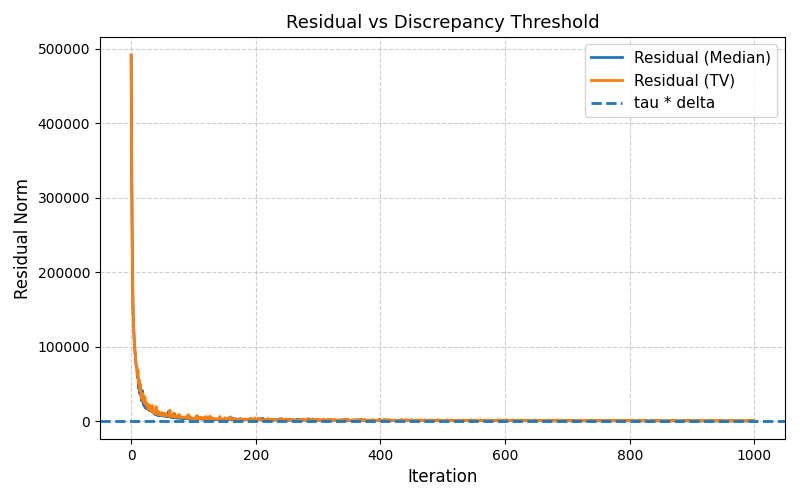}

\caption{Residual curves for phase retrieval CT across different noise levels. From top-left to bottom-right, the panels correspond to $\delta_{\mathrm{rel}} = 0.01, 0.005, 0.003, 0.001,$ and $0.0005$, respectively. }
\label{fig:PR_residuals}
\end{figure}

\end{document}